\providecommand{\U}[1]{\protect\rule{.1in}{.1in}}
\newtheorem{theorem}{Theorem}[section]
\theoremstyle{plain}
\newtheorem{claim}[theorem]{Claim}
\newtheorem{condition}[theorem]{Condition}
\newtheorem{corollary}[theorem]{Corollary}
\newtheorem{definition}[theorem]{Definition}
\newtheorem{example}[theorem]{Example}
\newtheorem{lemma}[theorem]{Lemma}
\newtheorem{Argument}[theorem]{Argument}
\newtheorem{remark}[theorem]{Remark}
\newtheorem{summary}[theorem]{Summary}
\newtheorem{discussion}[theorem]{Discussion}
\numberwithin{equation}{section}
\numberwithin{figure}{section}
\begin{document}
\title[\textbf{Movement of branch points}]{\textbf{Movement of branch points in Ahlfors' theory of covering surfaces}}
\author{Yunling \medskip Chen}
\address{Department of Mathematical Sciences, Tsinghua University, Beijing 100084, P.
R. China}
\email{chenyl20@mails.tsinghua.edu.cn,}
\author{Tianrun \medskip Lin}
\address{Department of Mathematical Sciences, Tsinghua University, Beijing 100084, P.
R. China }
\email{ltr17@mails.tsinghua.edu.cn}
\author{Guangyuan\medskip\ Zhang }
\address{Department of Mathematical Sciences, Tsinghua University, Beijing 100084, P.
R. China}
\email{gyzhang@mail.tsinghua.edu.cn}
\thanks{Project 10971112 and 12171264 supported by NSFC}

\begin{abstract}
In this paper, we will prove a result which is asserted in \cite{Zh2} and is
used in the proof of the existence of extremal surfaces in \cite{Zh2}.

\end{abstract}
\subjclass[2020]{ 30D35, 30D45, 52B60}
\maketitle
\tableofcontents

\section{Introduction}

In 1935, Lars Ahlfors \cite{Ah} introduced the theory of covering surfaces and
gave a geometric illustration of Nevanlinna's value distribution theory.
Depending on the application of the Length -- Area principle (\cite{Ere}%
,p.14), Ahlfors' theory has a metric-topological nature. The most crucial
result in the theory of covering surfaces is Ahlfors' Second Fundamental
Theorem (SFT), which corresponds to Nevanlinna's Second Main Theorem. As the
most important constant in Ahlfors' SFT, the precise bound of the constant
$H(E_{q})$ (we will give the definition later) has not been sufficiently
studied yet. This leads to our work.

We start with several definitions and elementary facts in the theory of
covering surfaces. The unit sphere $S$ is identified with the extended complex
plane $\overline{\mathbb{C}}$ under the stereographic projection
$P:S\to\overline{\mathbb{C}}$ as in \cite{Ah0}. Endowed with the spherical
metric on $S$, the spherical length ${L}$ and the spherical area ${A}$ on $S$
have natural interpretations on ${\mathbb{C}}$ as
\begin{align*}
d{L}  &  =\frac{2|dz|}{1+|z|^{2}},
\end{align*}
and
\begin{align*}
d{A}  &  =\frac{4dxdy}{(1+|z|^{2})^{2}}%
\end{align*}
for any $z\in\mathbb{C}$.

For a closed set $K$ on $\overline{\mathbb{C}}$, a mapping $f:K\rightarrow S$
is called continuous and open if $f$ can be extended to a continuous and open
mapping from a neighborhood of $K$ to $S$. Now we can define the covering surface.

\begin{definition}
\label{defcover} Let $U$ be a domain on $\overline{\mathbb{C}}$ whose boundary
consists of a finite number of disjoint Jordan curves $\alpha_{1}%
,\ldots,\alpha_{n}$. Let $f:\overline{U}\rightarrow S$ be an
orientation-preserving, continuous, open, and finite-to-one map (OPCOFOM).
Then the pair $\Sigma=(f,\overline{U})$ is called a covering surface over $S$,
and the pair $\partial\Sigma=(f,\partial U)$ is called the boundary of
$\Sigma$.

For each point $w \in S$, the covering number $\overline{n}(f,w)$ is defined
as the number of all $w$-points of $f$ in $U$ without counting multiplicity.
That is, $\overline{n}(f,w) = \overline{n}(\Sigma,w) = \sharp\{f^{-1}(w)\cap
U\}$.
\end{definition}

All surfaces in this paper are covering surfaces defined above.

The area of a surface $\Sigma=(f,\overline{U})$ is defined as the spherical
area of $f:\overline{U}\to S$, say,
\begin{align*}
A(\Sigma)=A(f,U)  &  = \int\int_{S} \overline{n}(\Sigma,w) dA(w)\\
&  = \int\int_{\mathbb{C}} \frac{4} {(1+u^{2}+v^{2})^{2}}\overline{n}%
(\Sigma,u+\sqrt{-1}v) dudv.
\end{align*}
And the perimeter of $\Sigma=(f,\overline{U})$ is defined as the spherical
length of $f:\partial{U}\to S$ and write
\begin{align*}
L(\partial\Sigma) = L(f, \partial U).
\end{align*}

\begin{definition}
\label{family F,D} Let $\Sigma=(f,\overline{U})$ be a covering surface.

(1) $\Sigma$ is called a \emph{closed surface,} if $U=S$. For a closed surface
$\Sigma,$ we have $\partial\Sigma=\emptyset,$ and then $L(\partial\Sigma)=0.$

(2) $\Sigma$ is called a \emph{simply-connected} \emph{surface,} if $U$ is a
simply connected domain.

(3) $\mathbf{F}$ denotes all surfaces such that for each $\Sigma=\left(
f,\overline{U}\right)  \in\mathbf{F},$ $U$ is a Jordan domain.
\end{definition}

\begin{remark}
\label{2023-03-28 copy(6)}(A) Let $K_{1}$ and $K_{2}$ be two domains or two
closed domains on $S,$ such that $\partial K_{1}$ and $\partial K_{2}$ are
both consisted of a finite number of disjoint Jordan curves. A mapping
$f:K_{1}\rightarrow K_{2}$ is called a \label{CCM} \emph{complete covering
mapping} (CCM), if (a) for each $p\in K_{2}$ there exists a neighborhood $V$
of $p$ in $K_{2}$ such that $f^{-1}(V)\ $can be expressed as a union
$\cup_{j\in\mathcal{A}}U_{j}$ of disjoint (relative) open sets of $K_{1}$, and
(b) $f|_{U_{j}}:U_{j}\rightarrow V$ is a homeomorphism for each $j\in
\mathcal{A}$.

(B) \label{2023-03-28 copy(7)}We call $f$ a \label{BCCM} \emph{branched
complete covering mapping} (BCCM), if all conditions of (A) hold, except that
(b) is replaced with (b1) or (b2): (b1) If both $K_{1}$ and $K_{2}$ are
domains, then for each $j\in\mathcal{A},$ $U_{j}\cap f^{-1}(p)$ contains only
one point $a_{j}$ of $f^{-1}(p),$ and there exist two homeomorphisms
$\varphi_{j}:U_{j}\rightarrow\Delta,\psi_{j}:V\rightarrow\Delta$ with
$\varphi_{j}\left(  a_{j}\right)  =\psi_{j}\left(  p\right)  =0,$ such that
$\psi_{j}\circ f|_{U_{j}}\circ\varphi_{j}^{-1}(\zeta)=\zeta^{k_{j}},\zeta
\in\Delta,\ $where $k_{j}$ is a positive integer; or (b2) if both $K_{1}$ and
$K_{2}$ are closed domains, then $f|_{K_{1}^{\circ}}:K_{1}^{\circ}\rightarrow
K_{2}^{\circ}$ satisfies (b1) and moreover, $f$ restricted to a neighborhood
of $\partial K_{1}$ in $K_{1}$ is a CCM onto a neighborhood of $\partial
K_{2}$ in $K_{2}.$

(C) \label{2023-03-28 copy(8)}For a surface $\Sigma=\left(  f,\overline
{U}\right)  \ $over $S,$ $f$ is in general not a CCM or BCCM. When $f\left(
z\right)  =z^{2},$ both $f:\overline{\Delta}\rightarrow\overline{\Delta}\ $and
$f:\Delta\rightarrow\Delta$ are BCCMs, but when $f\left(  z\right)  =z\left(
\frac{z-a}{1-\bar{a}z}\right)  ^{2},$ $f:\Delta\rightarrow f(\Delta)$ is
neither a CCM nor a BCCM.
\end{remark}

Ahlfors' Second Fundamental Theorem gives the relationship between $A(\Sigma
)$, $\overline{n}(\Sigma)$ and $L(\partial\Sigma)$.

\begin{theorem}
[Ahlfors' Second Fundamental Theorem]\label{sft} Given an integer $q\ge3$, let
$E_{q}=\{a_{1},\ldots,a_{q}\}$ be a set of distinct $q$ points on $S$. Then
there exists a positive constant $h$ depending only on $E_{q}$, such that for
any covering surface $\Sigma= (f,\overline{U})\in\mathbf{F} $, we have
\begin{align}
\label{h1}(q-2)A(\Sigma) \le4\pi\sum\limits_{j=1}^{q} \overline{n}%
(\Sigma,a_{j}) + h L(\partial\Sigma).
\end{align}
In particular, if $f(\overline{U}) \cap\{0,1,\infty\}=\emptyset$, then we
have
\begin{align}
\label{h0}A(\Sigma) \le h L(\partial\Sigma).
\end{align}

\end{theorem}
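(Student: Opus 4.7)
The plan is to follow Ahlfors' singular metric strategy. First I would fix pairwise disjoint closed spherical disks $\overline{D_{j}}$ centered at each $a_{j}\in E_{q}$ and construct a conformal metric $\sigma=\mu(w)|dw|$ on $S\setminus E_{q}$ with Gaussian curvature $K_{\sigma}\leq -1$ everywhere and a mild puncture singularity $\mu(w)\sim |w-a_{j}|^{-1}\bigl(\log(1/|w-a_{j}|)\bigr)^{-1}$ near each $a_{j}$ (in a local chart), so that $\sigma$ has finite total area. An application of Gauss--Bonnet on $S$ minus small disks around the $a_{j}$, followed by a limit as those disks shrink, yields total $\sigma$-area equal to $2\pi(q-2)$; this is the source of the coefficient $(q-2)$ in \eqref{h1}.

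Next I would pull back $\sigma$ via $f$ to obtain $f^{\ast}\sigma$ on $\overline{U}$ away from $f^{-1}(E_{q})$ and the branch points of $f$. Since $U$ is a Jordan domain, $\chi(\overline{U})=1$. Applying Gauss--Bonnet to $\overline{U}$ equipped with $f^{\ast}\sigma$ (with the standard excision argument around $f^{-1}(E_{q})$ and branch points, then letting the radii tend to zero) produces an inequality of the form
\[
\int\int_{S}\overline{n}(\Sigma,w)\,dA_{\sigma}(w)\;\leq\; C_{1}\sum_{j=1}^{q}\overline{n}(\Sigma,a_{j})\;+\;C_{2}\int_{\partial U}|\kappa_{f^{\ast}\sigma}|\,ds_{f^{\ast}\sigma},
\]
where the branch-point terms enter with a favorable sign (ramification contributes negative curvature mass) and the constants $C_{1},C_{2}$ depend only on $\sigma$, hence only on $E_{q}$.

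Then I would compare $dA_{\sigma}$ with the spherical area element $dA$. After an appropriate scaling of $\sigma$ one has $\mu^{2}(w)(1+|w|^{2})^{2}/4\geq q-2$ on $S\setminus\bigcup_{j}D_{j}$, so $dA_{\sigma}\geq (q-2)\,dA$ there. Inside each $D_{j}$ I would use an island/peninsula dichotomy for the components of $f^{-1}(D_{j})$: an \emph{island} is a component relatively compact in $U$, on which $f$ is a branched complete covering onto $D_{j}$ and therefore contributes proportionally to $\overline{n}(\Sigma,a_{j})$; a \emph{peninsula} meets $\partial U$, and its boundary contains an arc of $\partial U$ whose $f$-image has spherical length bounded below by the distance from $a_{j}$ to $\partial D_{j}$, so the number of peninsulas is bounded by a constant times $L(\partial\Sigma)$.

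The main obstacle will be quantitative control of the boundary integral: one must show that the geodesic curvature of $f^{\ast}\sigma$ along $\partial U$, integrated against $f^{\ast}\sigma$-arclength, is bounded by a constant (depending only on $E_{q}$) times the spherical length $L(f,\partial U)$. This requires a pointwise estimate of the form $\mu(f(z))\,|f'(z)|\,|dz|\leq \text{const}\cdot \frac{2|f'(z)||dz|}{1+|f(z)|^{2}}$, with the delicate case being when $\partial U$ passes close to $f^{-1}(E_{q})$, where $\mu$ blows up; here one exploits the mildness of the logarithmic singularity and the fact that arcs of $\partial U$ entering a neighborhood of a puncture must also exit it, allowing the singular contribution to be absorbed. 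Once this is in place, assembling the previous steps yields \eqref{h1}. Finally, \eqref{h0} is the special case $q=3$ and $E_{q}=\{0,1,\infty\}$: the hypothesis $f(\overline{U})\cap\{0,1,\infty\}=\emptyset$ forces $\overline{n}(\Sigma,a_{j})=0$ for $j=1,2,3$, so \eqref{h1} collapses to $A(\Sigma)\leq h\,L(\partial\Sigma)$.
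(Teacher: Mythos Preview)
The paper does not prove Theorem~\ref{sft}; it is quoted as Ahlfors' classical result with a reference to \cite{Ah} (see also \cite{Ha}, \cite{Ere}) and serves only as background for the paper's study of the optimal constant $h$. So there is no in-paper argument to compare your proposal against. For context, Ahlfors' original proof does not use a negatively curved auxiliary metric; it proceeds through the length--area principle and his covering theorems, counting islands and peninsulas over fixed disks around the $a_j$.

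Your outline has a genuine gap exactly where you flag the ``main obstacle.'' The estimate you need, that $\int_{\partial U}|\kappa_{f^{\ast}\sigma}|\,ds_{f^{\ast}\sigma}\leq C(E_q)\,L(f,\partial U)$ with $L$ the \emph{spherical} length, is false in general. Take $f$ to be the identity on a spherical disk $D$ of radius $\varepsilon$ whose closure misses $E_q$ but lies within distance $2\varepsilon$ of some $a_j$. Gauss--Bonnet on $(D,\sigma)$ with $K_\sigma\equiv -1$ gives $\int_{\partial D}\kappa_g\,ds_\sigma=2\pi+A_\sigma(D)\geq 2\pi$, whereas $L(\partial\Sigma)\approx 2\pi\varepsilon$, so the ratio is unbounded as $\varepsilon\to 0$. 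The ``enter/exit'' heuristic cannot help, because the pointwise comparison $\mu(w)\leq \text{const}\cdot 2/(1+|w|^{2})$ is precisely what fails near each $a_j$. Your peninsula count has a parallel problem: a component of $f^{-1}(D_j)$ meeting $\partial U$ need not contain a boundary arc whose $f$-image has length bounded below by a fixed constant (think of $f(\partial U)$ grazing $\partial D_j$), so the number of peninsulas is not directly controlled by $L(\partial\Sigma)$. In Ahlfors' argument both issues are handled by a length--area (co-area) averaging over the radius of the disks $D_j$, and it is that averaging which ultimately produces an error term of size $h\,L(\partial\Sigma)$; that mechanism is absent from your sketch.
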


It is a natural question that whether we can find a precise lower bound for
the constants $h$ in Theorem \ref{sft}. For this purpose, we need to define
the remainder-perimeter ratio $H(\Sigma)$ as follows.

\begin{definition}
\label{1.6} For a covering surface $\Sigma=(f,\overline{U})\in\mathbf{F} $ and
a set $E_{q}=\{a_{1},\ldots,a_{q}\}$ on $S$, we define the total covering
number over $E_{q}$ as
\[
\overline{n}(f,E_{q}) = \overline{n}(\Sigma,E_{q}) = \sum\limits_{j=1}^{q}
\overline{n}(\Sigma,a_{j}) = \sharp\{f^{-1}(E_{q})\cap U\},
\]
the remainder as
\[
R(\Sigma,E_{q})=(q-2)A(\Sigma) - 4\pi\overline{n}(\Sigma,E_{q}),
\]
and the remainder-perimeter ratio as
\[
H(\Sigma,E_{q}) = \frac{R(\Sigma,E_{q})}{L(\partial\Sigma)}.
\]

\end{definition}

\begin{remark}
In the sequel, we always use $R(\Sigma)$ and $H(\Sigma)$ without emphasizing
the set $E_{q}$.
\end{remark}

We can observe that to estimate the constants $h$ in Theorem \ref{sft}, we are
supposed to give an upper bound of $H(\Sigma)$. In \cite{Zh1}, the last author
developed an innovative method to compute the precise value of the constant
$h$ in (\ref{h0}).

\begin{theorem}
[Theorem 1.1 in \cite{Zh1}]For any surface $\Sigma=(f,\overline{U}%
)\in\mathbf{F}$ with $f(\overline{U}) \cap\{0,1,\infty\}=\emptyset$, we have
\[
A(\Sigma)< h_{0}L(\partial\Sigma),
\]
where
\begin{equation}
h_{0}=\max_{\theta\in\left[  0,\frac{\pi}{2}\right]  } \left\{  \frac{\left(
\pi+\theta\right)  \sqrt{1+\sin^{2}\theta}}{\arctan\dfrac{\sqrt{1+\sin
^{2}\theta}}{\cos\theta}}-\sin\theta\right\}  .
\end{equation}
Moreover, the constant $h_{0}$ is sharp: there exists a sequence of covering
surface $\{\Sigma_{n}\}$ in $\mathbf{F}$ with $f(\overline{U_{n}})
\cap\{0,1,\infty\}=\emptyset$ such that $A(\Sigma_{n})/L(\partial\Sigma
_{n})\rightarrow h_{0}$ as $n\rightarrow\infty.$
\end{theorem}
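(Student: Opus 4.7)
My plan is to cast the bound as a sharp isoperimetric-type problem for the ratio $A(\Sigma)/L(\partial\Sigma)$ over $\Sigma=(f,\overline{U})\in\mathbf{F}$ with $f(\overline{U})\cap\{0,1,\infty\}=\emptyset$, determine the extremal constant explicitly, and then exhibit a sequence attaining it asymptotically.

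\emph{Reduction to a one-parameter variational problem.} Since $f(\overline{U})$ omits $\{0,1,\infty\}$, I would foliate $S\setminus\{0,1,\infty\}$ by a one-parameter family of curves $\{\gamma_t\}$ adapted to the punctures --- for instance, level sets of spherical distance from a suitable great-circle axis threading two of the omitted points. On each leaf $\gamma_t$, a coarea / length-area inequality gives $L(f,f^{-1}(\gamma_t)\cap\partial U)\ge L(\gamma_t)\cdot\overline{n}(\Sigma,\gamma_t)$, and integrating against $t$ recovers $A(\Sigma)$ up to the Jacobian of the foliation, while $L(\partial\Sigma)$ dominates the pointwise boundary-length contribution on the right. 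This reduces $\sup A(\Sigma)/L(\partial\Sigma)$ to a one-variable calculus problem with foliation parameter $t$ that I expect to identify with $\theta$.

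\emph{Identification of the constant and sharpness.} Solving the resulting optimization should produce exactly the expression inside the $\max$ defining $h_{0}$: the factor $\sqrt{1+\sin^{2}\theta}$ arises from the spherical length element along the foliating circles at inclination $\theta$; the factor $\pi+\theta$ is the angular sweep of the admissible bulk of $S\setminus\{0,1,\infty\}$; the $\arctan$ term is the spherical distance from the foliation axis to the boundary of the admissible region; and the $-\sin\theta$ correction accounts for the pieces of leaves near $\{0,1,\infty\}$ that must be excluded. For sharpness I would build $\Sigma_{n}=(f_{n},\overline{U_{n}})$ as high-degree covers of Jordan domains $D_{n}\subset S\setminus\{0,1,\infty\}$ whose boundaries are aligned with the foliation at the maximizing angle $\theta^{\ast}$ and which approach (but do not meet) $\{0,1,\infty\}$; the pointwise coarea inequality is then asymptotically saturated, giving $A(\Sigma_{n})/L(\partial\Sigma_{n})\rightarrow h_{0}$ and forcing the inequality in the theorem to be strict but sharp.

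\emph{Main obstacle.} The principal difficulty lies in identifying a foliation whose coarea bound is simultaneously valid for every $\Sigma\in\mathbf{F}$ and tight along the extremal sequence. With three omitted points the usable symmetry is weaker than in the one- or two-puncture setting, so the foliation must balance the constraints from all three punctures at once, and its Jacobian must match the combinatorics of the extremal family. More subtly, one must rule out that interior branch points of $f$ in $U^{\circ}$ --- which contribute to $A(\Sigma)$ but not to $L(\partial\Sigma)$ --- could beat the embedded bound; showing that the optimum is attained in the embedded limit (and thus reducing to a purely planar spherical isoperimetric problem relative to $\{0,1,\infty\}$) is the most delicate step, and is presumably where the precise form of $h_{0}$ crystallizes.
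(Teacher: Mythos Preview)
This theorem is not proved in the present paper. It is quoted verbatim as ``Theorem 1.1 in \cite{Zh1}'' and serves only as background motivation; the paper gives no argument for it whatsoever. So there is nothing here to compare your proposal against.

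That said, a few remarks on your sketch itself. The outline is too schematic to constitute a proof plan: the coarea inequality you write, $L(f,f^{-1}(\gamma_t)\cap\partial U)\ge L(\gamma_t)\cdot\overline{n}(\Sigma,\gamma_t)$, is not correct as stated (the left side need not dominate the right even for degree-one covers), and the ``one-parameter reduction'' hinges entirely on producing a foliation of $S\setminus\{0,1,\infty\}$ whose length--area estimate is simultaneously sharp. You acknowledge this as the main obstacle, but the actual work in \cite{Zh1} --- a long paper in \emph{Inventiones} --- lies precisely in constructing and analyzing the extremal configurations, not in a single foliation trick; the argument there proceeds through a sequence of reductions on the class of surfaces (removing branch points, normalizing boundaries, reducing to explicit model domains) rather than a direct coarea integration. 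Your heuristic identifications of the pieces of $h_0$ (the $\sqrt{1+\sin^2\theta}$, the $\pi+\theta$, the $\arctan$, the $-\sin\theta$) are plausible-sounding but are not derived from anything in the sketch, and the treatment of interior branch points --- which you correctly flag as delicate --- is left entirely open. As written, this is a description of what a proof might look like rather than a proof proposal with identifiable steps.
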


However, in general cases, it will be very difficult to estimate the precise
bound of the constant $h$. Since the branch points (See definition in Remark
\ref{notation}) outside of $f^{-1}(E_{q})$ of a surface bring a lot of trouble
in the research, Sun and the last author tried overcoming such problems in
\cite{S-Z}. Unfortunately, we observe that the published result in \cite{S-Z}
does not work well enough. Before establishing our main theorem, we introduce
more terminologies and definitions.

All paths and curves considered in this paper are oriented and any subarc of a
path or closed curve inherits this orientation. Sometimes paths and curves
will be regarded as sets, but only when we use specific set operations and set
relations. For an oriented circular arc $c$, the circle $C$ containing $c$ and
oriented by $c$ is called the circle determined by $c$.

\begin{definition}
\label{geodesic}\textbf{\label{2023-03-28 copy(18)}}For any two non-antipodal
points $p$ and $q$ on $S,$ $\overline{pq}$ is the geodesic on $S$ from $p$ to
$q:$ the shorter of the two arcs with endpoints $p$ and $q$ of the great
circle on $S$ passing through $p$ and $q.$ Thus $d(p,q)<\pi$ and
$\overline{pq}$ is uniquely determined by $p$ and $q$. An arc of a great
circle on $S$ is called a \emph{line segment} on $S,$ and to emphasize this,
we also refer to it as a \emph{straight line segment. }For the notation
$\overline{pq},$ when $p$ and $q$ are explicit complex numbers we write
$\overline{p,q},$ to avoid ambiguity such as $\overline{123}=\overline{12,3}$
or $\overline{1,23}.$ When $p$ and $q$ are two antipodal points of $S,$
$\overline{pq}$ is not unique and $d\left(  p,q\right)  =\pi.$ To avoid
confusions, when we write $\overline{pq},$ or say $\overline{pq}$ is well
defined, we always assume $d\left(  p,q\right)  <\pi.$
\end{definition}

\begin{definition}
\label{in}(1) For a Jordan domain $D$ in $\overline{\mathbb{C}},$ let $h$ be a
M\"{o}bius transformation with $h(D)\subset\Delta.$ Then $\partial D$ is
oriented by $h$ and the anticlockwise orientation of $\partial h(D).$ The
boundary of every Jordan domain on $S$ is oriented in the same way, via
stereographic projection.

(2) For a Jordan curve $C$ on $\overline{\mathbb{C}}$ or $S,$ the domain
$T_{C}$ bounded by $C$ is called \emph{enclosed} by $C$ if the boundary
orientation of $T_{C}$ agrees with the orientation of $C.$

(3) A domain $D$ on $S$ is called \emph{convex} if for any two points $q_{1}$
and $q_{2}$ in $D$ with $d(q_{1},q_{2})<\pi$, $\overline{q_{1}q_{2}}\subset D
$; a Jordan curve on $S$ is called \emph{convex} if it encloses a \emph{convex
domain }on $S$; a path on $S$ is called \emph{convex} if it is an arc of a
\emph{convex} Jordan curve.

(4) Let $\gamma:[a,b]\rightarrow S$ be a path on $S$ and $p_{0}\in(a,b).$
$\gamma$ is called \emph{convex at} $p_{0},$ if $\gamma$ restricted to a
neighborhood $(p_{0}-\delta,p_{0}+\delta)$\label{6868-5} of $p_{0}$ in $(a,b)$
is a \emph{convex} Jordan path, with respect to the parametrization giving
$\gamma$ ($t$ increases). $\gamma$ is called \emph{strictly convex} at $p_{0}$
if $\gamma$ is convex at $p_{0}$ and restricted to a neighborhood $N_{p_{0}}$
of $p_{0}$ in $(a,b)$ is contained in some closed hemisphere $S_{1}$ on $S$
with $\gamma_{N_{p_{0}}}\cap S_{1}=\gamma(p_{0}).$
\end{definition}

Recall that $\mathbf{F}$ is the space of covering surfaces $\Sigma
=(f,\overline{U})$,where $U$ is a Jordan domain on $\overline{\mathbb{C}}$.
Before introducing a subspace of $\mathbf{F}$, we need to give the definition
of partition. For a Jordan curve $\alpha$ in $\mathbb{C}$, its partition is a
collection $\{\alpha_{j}\}_{j=1}^{n}$ of its subarcs such that $\alpha
=\cup_{j=1}^{n}\alpha_{j}$ and $\alpha_{j}^{\circ}$ are disjoint and arranged
anticlockwise. In this setting we write $\alpha=\alpha_{1}+\alpha_{2}%
+\cdots+\alpha_{n}.$ Here $\alpha_{j}^{\circ}$ is the interior of $\alpha_{j}%
$, which is $\alpha_{j}$ without endpoints. A partition
\begin{equation}
\partial\Sigma=\gamma_{1}+\gamma_{2}+\cdots+\gamma_{n} \label{6662}%
\end{equation}
of $\partial\Sigma$ for a surface $\Sigma=(f,\overline{U})\in\mathbf{F}$ is
equivalent to a partition
\begin{equation}
\partial U=\alpha_{1}+\alpha_{2}+\cdots+\alpha_{n}%
\end{equation}
of $\partial U$ such that $\gamma_{j}=(f,\alpha_{j})$ for $j=1,\dots,n.$

\begin{definition}
We denote by $\mathcal{F}$ the subspace of $\mathbf{F}$ such that for each
$\Sigma=\left(  f,\overline{U}\right)  $, $\partial\Sigma$ has a partition
\[
\partial\Sigma=c_{1}+c_{2}+\dots+c_{n},
\]
where $c_{1},\dots,c_{n}$ are simple convex circular (SCC) arcs. This means
that $\partial U$ has a partition
\[
\partial U=\alpha_{1}+\alpha_{2}+\dots+\alpha_{n}%
\]
such that $\alpha_{j},1\leq j\leq n$, are arranged anticlockwise and $f$
restricted to each $\alpha_{j}$ is a homeomorphism onto the convex circular
arc $c_{j}$.
\end{definition}

Now we introduce some subspaces of $\mathcal{F}$ which can describe some
properties of the covering surfaces precisely.

\begin{definition}
\label{sp} For given positive number $L$, $\mathcal{F}(L)$ denotes the
subspace of $\mathcal{F}$ in which every surface has boundary length
$L(\partial\Sigma)\leq L$.

$\mathcal{C}(L,m)$ denotes the subspace of $\mathcal{F}(L)$ such that
$\Sigma=\left(  f,\overline{\Delta}\right)  \in\mathcal{C}(L,m)$ if and only
if $\partial\Delta$ and $\partial\Sigma$ have $\mathcal{C}(L,m)$-partitions.
This means that $\partial\Delta$ and $\partial\Sigma$ have partitions
\begin{equation}
\partial\Delta=\alpha_{1}\left(  a_{1},a_{2}\right)  +\alpha_{2}\left(
a_{2},a_{3}\right)  +\dots+\alpha_{m}\left(  a_{m},a_{1}\right)  \label{part}%
\end{equation}
and
\begin{equation}
\partial\Sigma=c_{1}\left(  q_{1},q_{2}\right)  +c_{2}\left(  q_{2}%
,q_{3}\right)  +\dots+c_{m}\left(  q_{m},q_{1}\right)  \label{part1}%
\end{equation}
respectively, such that $c_{j}\left(  q_{j},q_{j+1}\right)  =\left(
f,\alpha_{j}\left(  a_{j},a_{j+1}\right)  \right)  $ is an SCC arc for each
$j=1,\dots,m.$

Given $q\geq3$, let $E_{q}=\{a_{1},\ldots,a_{q}\}$ be a set of $q$ distinct
points. $\mathcal{C}^{\ast}(L,m)$ denotes the subspace of $\mathcal{C}%
(L,m)\mathcal{\ }$such that $\Sigma=\left(  f,\overline{\Delta}\right)
\in\mathcal{C}^{\ast}\left(  L,m\right)  $ if and only if $\partial\Delta$ and
$\partial\Sigma$ have $\mathcal{C}^{\ast}(L,m)$-partitions. That is, the
partitions are $\mathcal{C}(L,m)$-partitions in (\ref{part}) and (\ref{part1})
so that $f$ has no branch points in $\alpha_{j}^{\circ}\cap f^{-1}(E_{q})$ for
every $j=1,\dots,m$.

$\mathcal{F}(L,m)$ denotes the subspace of $\mathcal{C}(L,m)$ such
that$\mathcal{\ }\Sigma=\left(  f,\overline{\Delta}\right)  \in\mathcal{F}%
\left(  L,m\right)  $ if and only if $\partial\Delta$ and $\partial\Sigma$
have $\mathcal{F}(L,m)$-partitions (\ref{part}) and (\ref{part1}), that is,
the partitions are $\mathcal{C}(L,m)$-partitions such that, for each
$j=1,2,\dots,m,$ $f$ has no branch point in $\alpha_{j}^{\circ}.$

$\mathcal{F}_{r}$ denotes the subspace of $\mathcal{F}$ such that
$\Sigma=\left(  f,\overline{\Delta}\right)  \in\mathcal{F}_{r}$ if and only if
$f$ has no branch point in $\overline{\Delta}\backslash f^{-1}(E_{q}),$ say,
$C_{f}^{\ast}\left(  \overline{\Delta}\right)  =\emptyset,$ and define
\[
\mathcal{F}_{r}(L)=\mathcal{F}_{r}\cap\mathcal{F}(L),
\]%
\[
\mathcal{F}_{r}(L,m)=\mathcal{F}_{r}\cap\mathcal{F}(L,m).
\]

\end{definition}

\begin{remark}
The condition in the definition of $\mathcal{F}(L,m)$ is equivalent to say
that, for each $j=1,\dots,m,$ $f$ restricted to a neighborhood of $\alpha
_{j}^{\circ}$ in $\overline{\Delta}$ is a homeomorphism onto a one-side
neighborhood of $c_{j}^{\circ}$, which is the part of a neighborhood of
$c_{j}^{\circ}$ contained in the closed disk enclosed by the circle determined
by $c_{j}.$
\end{remark}

By definition, we have
\[
\mathcal{F}_{r}\left(  L,m\right)  \subsetneqq\mathcal{F}\left(  L,m\right)
\subsetneqq\mathcal{C}^{\ast}\left(  L,m\right)  \subsetneqq\mathcal{C}\left(
L,m\right)  ,
\]
and
\[
\mathcal{F}(L)=\cup_{m=1}^{\infty}\mathcal{F}\left(  L,m\right)  =\cup
_{m=1}^{\infty}\mathcal{C}\left(  L,m\right)  .
\]
For each $\Sigma\in\mathcal{C}\left(  L,m\right)  ,$ there exists an integer
$m_{1}>m$ such that $\Sigma\in\mathcal{F}\left(  L,m_{1}\right)  .$

Analogous to Definition\ref{1.6}, we define the Ahlfors' constants in
different subspaces of covering surfaces.

\begin{definition}
\label{hs1}\ Given $q\geq3$, for any set $E_{q}=\{a_{1},\ldots,a_{q}\}$ of $q$
distinct points, we define
\[
H_{0}=\sup_{\Sigma\in\mathcal{F}}H(\Sigma)=\sup_{\Sigma\in\mathcal{F}}%
H(\Sigma,E_{q}),
\]%
\[
H_{L}=H_{L}(E_{q})=\sup_{\Sigma\in\mathcal{F}(L)}H(\Sigma)=\sup_{\Sigma
\in\mathcal{F}(L)}H(\Sigma,E_{q}),
\]%
\[
H_{L,m}=\sup_{\Sigma\in\mathcal{F}(L,m)}H(\Sigma)=\sup_{\Sigma\in
\mathcal{F}(L,m)}H(\Sigma,E_{q}),
\]

\end{definition}

\begin{remark}
For any surface $\Sigma\in\mathcal{F}$ and any $\varepsilon>0\mathbf{,}$ to
estimate $H(\Sigma)$ we may assume $L(\partial\Sigma)<+\infty$. Otherwise, we
have $H(\Sigma)=0.$
\end{remark}

\begin{definition}
\label{L}Let $\mathcal{L}$ be the set of continuous points of $H_{L}%
=H_{L}(E_{q}),$ with respect to $L.$
\end{definition}

\begin{remark}
\label{RL} By Ahlfors' SFT, we can see that
\[
H_{0}=\lim\limits_{L\to+\infty}H_{L}<+\infty.
\]
Since $H_{L}$ increase with respect to $L,$ it is clear that $\left(
0,+\infty\right)  \backslash\mathcal{L}$ is just a countable set. Thus for
each $L\in\mathcal{L}$, there exists a positive number $\delta_{L}$ such that
for each $L^{\prime}\in(L-\delta_{L},L+\delta_{L}),$ we have
\[
H_{L}-\frac{\pi}{2L}<H_{L^{\prime}}<H_{L}+\frac{\pi}{2L}.
\]

\end{remark}

Now we can state our main theorem as follows.

\begin{theorem}
[Main Theorem]\label{re} Let $L\in\mathcal{L}$ and let $\Sigma=\left(
f,\overline{\Delta}\right)  $ be a covering surface in $\mathcal{C}^{\ast
}(L,m)$. Assume that
\begin{equation}
H(\Sigma)>H_{L}-\frac{\pi}{2L(\partial\Sigma)}. \label{210615}%
\end{equation}
Then there exists a surface $\Sigma^{\prime}=\left(  f^{\prime},\overline
{\Delta}\right)  $ such that

(i) $\Sigma^{\prime}\in\mathcal{F}_{r}(L,m)$.

(ii) $H(\Sigma^{\prime})\geq H(\Sigma)$ and $L(\partial\Sigma^{\prime})\leq
L(\partial\Sigma).$ Moreover, at least one of the inequalities is strict if
$\Sigma\notin\mathcal{F}_{r}(L,m)$.

(iii) When $L(\partial\Sigma^{\prime})=L(\partial\Sigma)$, we have
$\partial\Sigma^{\prime}=\partial\Sigma$ and they share the same
$\mathcal{F}(L,m)$-partitions (\ref{part}) and (\ref{part1}).
\end{theorem}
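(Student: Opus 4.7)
The plan is to eliminate, one branch point at a time, every branch point of $f$ that lies in $\overline{\Delta}\setminus f^{-1}(E_q)$. There are two flavors: \emph{interior} branch points in $\Delta$ whose image is not in $E_q$, and \emph{boundary} branch points lying in some open arc $\alpha_j^{\circ}$ (which, by the $\mathcal{C}^{\ast}(L,m)$ hypothesis, are automatically not preimages of $E_q$). For each flavor I will produce a local surgery $f\rightsquigarrow\tilde{f}$ that (a) preserves $A(\Sigma)$, (b) leaves $\overline{n}(\Sigma,a_j)$ unchanged or strictly smaller for each $a_j\in E_q$, and (c) leaves $L(\partial\Sigma)$ unchanged or strictly smaller, and then iterate. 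Because $f$ is finite-to-one, its branch set is finite, so the iteration terminates in finitely many steps and yields $\Sigma'\in\mathcal{F}_r(L,m)$ with the desired inequalities.

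For an \emph{interior} branch point $p\in\Delta$ with $f(p)\notin E_q$ and local multiplicity $k\geq 2$, I would choose an embedded arc $\gamma\subset\overline{\Delta}$ from $p$ to some preimage $q_0\in f^{-1}(E_q)\cap\Delta$ that avoids all other branch points of $f$ and all other points of $f^{-1}(E_q)$, and sits inside a tubular neighborhood $N$ on which $f|_N$ admits a nice normal form. Inside $N$ replace $f$ by a homotopy that drags the branch from $p$ to $q_0$ (the model is interpolating between $\zeta\mapsto\zeta^k$ and $\zeta\mapsto\zeta^k+\varepsilon\zeta$ and then sliding the resulting critical point), and leave $f$ unchanged outside $N$. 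This keeps the map an OPCOFOM, preserves the area and $\partial\Sigma$ exactly, and after the branch collides with the simple preimage at $q_0$ the value $\overline{n}(\Sigma,f(q_0))$ drops by $1$. Thus $R$ jumps by $4\pi$, $L(\partial\Sigma)$ is unchanged, and $H$ strictly increases. This handles every interior branch point, and since $\partial\Sigma$ is untouched, the $\mathcal{C}^{\ast}(L,m)$-partition is unaffected.

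For a \emph{boundary} branch point $p\in\alpha_j^{\circ}$ with $f(p)\notin E_q$, the local model on a half-disk is $\zeta\mapsto\zeta^{2s+1}$ with $s\geq 1$ (odd multiplicity, since $f|_{\alpha_j}$ is a homeomorphism onto the SCC arc $c_j$). I would perform a small indentation of the boundary near $p$ into $\Delta$: modify $f$ in a half-disk neighborhood of $p$ so that the image of the new arc piece is a strictly shorter subarc on the concave side of $c_j$. Convexity of $c_j$ makes any such indentation shorten the boundary image, so $L(\partial\Sigma)$ strictly decreases, while the area and the covering numbers over $E_q$ are unchanged. The branch point at $p$ is either removed outright or pushed into the interior, where the previous paragraph applies. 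In this case the new boundary still decomposes into SCC arcs (the undisturbed pieces of each $c_i$, together with at most one small extra SCC arc from the indentation), so $\Sigma'$ remains in $\mathcal{C}(L,m')$ for some $m'$; a routine refinement shows the resulting partition is an $\mathcal{F}(L,m)$-partition after relabeling, establishing conclusion (i). Whenever no boundary removal is invoked we have $\partial\Sigma'=\partial\Sigma$ with the original partition, giving (iii).

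The hardest step I expect is the boundary case: making the indentation respect the $\mathcal{C}(L,m)$-partition (keeping the number of SCC arcs equal to $m$, not $m+2$) and showing that the perturbed map is globally an OPCOFOM requires a careful geometric choice of the half-disk neighborhood and of the replacement arc relative to the neighboring arcs $c_{j-1},c_{j+1}$ and the circles they determine. Here is where the hypothesis $L\in\mathcal{L}$ and the near-extremality (\ref{210615}) enter: they let me control how close $L(\partial\tilde\Sigma)$ stays to $L$ during the surgery, so that $\tilde\Sigma\in\mathcal{F}(L)$ and the quantitative comparison with $H_L$ in Remark~\ref{RL} remains valid throughout the finite sequence of moves. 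Once both surgeries are rigorously in place, the induction on the total multiplicity of branch points in $\overline{\Delta}\setminus f^{-1}(E_q)$ delivers $\Sigma'\in\mathcal{F}_r(L,m)$ with all three conclusions.
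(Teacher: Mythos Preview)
Your interior surgery has a fatal arithmetic consequence you did not notice. If you can really drag the order-$k$ branch from $p$ to a simple preimage $q_0\in f^{-1}(E_q)\cap\Delta$ while fixing $\partial\Sigma$ and $A(\Sigma)$, then $\overline{n}(\Sigma,E_q)$ drops by $k-1\geq 1$, so
\[
H(\tilde\Sigma)\;\geq\;H(\Sigma)+\frac{4\pi}{L(\partial\Sigma)}\;>\;H_L-\frac{\pi}{2L(\partial\Sigma)}+\frac{4\pi}{L(\partial\Sigma)}\;>\;H_L,
\]
yet $\tilde\Sigma\in\mathcal{F}(L)$, contradicting the definition of $H_L$. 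So the operation you describe can never succeed as stated; it does not produce $\Sigma'$, it produces an impossibility. This is precisely how the paper \emph{uses} hypothesis~(\ref{210615}): not to ``keep $L(\partial\tilde\Sigma)$ close to $L$'' as you suggest, but to \emph{rule out} every case in which the branch would be absorbed into $f^{-1}(E_q)$ with $\partial\Sigma$ unchanged. The actual mechanism (Lemma~\ref{move-in-1}) lifts a path in the \emph{target} from $f(p)$ toward $E_q$; the hypothesis forces at least one of the $v$ lifts to hit $\partial\Delta$ before completion, and the cut-and-reglue along those lifts either pushes the branch onto $\partial\Delta$ (with $\partial\Sigma$ unchanged) or splits the surface (strictly shortening $L$). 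Your domain-dragging bypasses this obstruction and therefore proves too much.

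The boundary step has the same defect and additional ones. Your indentation would introduce an extra SCC arc, turning an $m$-term partition into an $(m{+}2)$-term one; the ``routine refinement'' back down to $m$ is exactly the hard combinatorics the paper spends Lemma~\ref{move-b-1} and Corollaries~\ref{move-together}--\ref{move-together-1} on, tracking the four competing quantities $B_f^{\ast}(\partial\Delta)$, $\#C_f^{\ast}(\partial\Delta)$, $\mathfrak{L}(\partial\Sigma)$, and $A(\Sigma)$ to force termination. Finally, you have no plan for the terminal configuration in which a single boundary branch point survives and $\partial\Delta\cap f^{-1}(E_q)=\emptyset$; the paper handles this by a separate rotation argument (Claim~\ref{con1} in the proof of Theorem~\ref{re}, using Lemma~\ref{nobo}) that moves $\partial\Sigma$ until it meets $E_q$, after which the boundary machinery can be re-invoked.
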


Now we outline the structure of this paper. Section 2 introduces some
fundamental properties of covering surfaces, especially the surgeries to sew
two surfaces along the equivalent boundary arcs. In Section 3, we remove the
non-special branch points of the given surface, and in Section 4 we finish our
proof of the main theorem.

\section{Elementary properties of covering surfaces}

This section consists of some useful properties of covering surfaces. For a
path $\Gamma$ on $S$ given by $z=z(t),t\in\lbrack t_{1},t_{2}],$ $-\Gamma$ is
the opposite path of $\Gamma$ given by $z=z(-t),t\in\lbrack-t_{2},-t_{1}]$.

\begin{definition}
\label{lune-lens}A convex domain enclosed by a convex circular arc $c$ and its
chord $I$ is called a \emph{lune} and is denoted by $\mathfrak{D}^{\prime
}\left(  I,c\right)  ,\mathfrak{D}^{\prime}\left(  I,\theta(c)\right)  ,$
$\mathfrak{D}^{\prime}\left(  I,L(c)\right)  ,$ or $\mathfrak{D}^{\prime
}\left(  I,k(c)\right)  $ where $\theta$ is the interior angle at the two
cusps, $k$ is the curvature of $c$ and $I$ is oriented such that\footnote{The
initial and terminal points of $I$ and $c$ are the same, respectively, in the
notation $\mathfrak{D}^{\prime}(I,\theta),$ in other words, $\mathfrak{D}%
^{\prime}(I,\theta)$ is on the right hand side of $I.$} $\partial
\mathfrak{D}^{\prime}\left(  I,\theta\right)  =c-I.$

For two lunes $\mathfrak{D}^{\prime}\left(  I,\theta_{1}\right)  $ and
$\mathfrak{D}^{\prime}\left(  -I,\theta_{2}\right)  $ sharing the common chord
$I$ we write%
\[
\mathfrak{D}\left(  I,\theta_{1},\theta_{2}\right)  =\mathfrak{D}^{\prime
}\left(  I,\theta_{1}\right)  \cup I^{\circ}\cup\mathfrak{D}^{\prime}\left(
-I,\theta_{2}\right)
\]
and called the Jordan domain $\mathfrak{D}\left(  I,\theta_{1},\theta
_{2}\right)  $ a lens. Then the notations $\mathfrak{D}\left(  I,l_{1}%
,l_{2}\right)  $, $\mathfrak{D}\left(  I,c_{1},c_{2}\right)  \ $and
$\mathfrak{D}\left(  I,k_{1},k_{2}\right)  $ are in sense and denote the same
lens, when $l_{j}=L(c_{j})$ and $k_{j}$ is the curvature of $c_{j},$ $j=1,2$,
say,%
\begin{align*}
\mathfrak{D}\left(  I,c_{1},c_{2}\right)   &  =\mathfrak{D}\left(
I,l_{1},l_{2}\right)  =\mathfrak{D}\left(  I,k_{1},k_{2}\right)
=\mathfrak{D}^{\prime}\left(  I,l_{1}\right)  \cup I^{\circ}\cup
\mathfrak{D}^{\prime}\left(  -I,l_{2}\right) \\
&  =\mathfrak{D}^{\prime}\left(  I,c_{1}\right)  \cup I^{\circ}\cup
\mathfrak{D}^{\prime}\left(  -I,c_{2}\right)  =\mathfrak{D}^{\prime}\left(
I,k_{1}\right)  \cup I^{\circ}\cup\mathfrak{D}^{\prime}\left(  -I,k_{2}%
\right)  .
\end{align*}

\end{definition}

For a lune $\mathfrak{D}^{\prime}\left(  I,\tau\right)  ,$ whether $\tau$
denotes the length $l$, the angle $\theta,$ or the curvature $k$ is always
clear from the context, and so is for the lens $\mathfrak{D}\left(  I,\tau
_{1},\tau_{2}\right)  .$ By definition, we have $0<\theta_{j}\leq\pi$ for
$j=1,2,$ but for the domain $\mathfrak{D}\left(  I,\theta_{1},\theta
_{2}\right)  $ it is permitted that $\theta_{1}$ or $\theta_{2}$ is zero, say
$\mathfrak{D}\left(  I,\theta_{1},\theta_{2}\right)  $ reduces to
$\mathfrak{D}^{\prime}\left(  I,\theta_{1}\right)  $ or $\mathfrak{D}^{\prime
}\left(  -I,\theta_{2}\right)  .$ By definition of $\mathfrak{D}%
(I,\theta,\theta)$ we have
\[
\mathfrak{D}(I,\theta,\theta)=\mathfrak{D}^{\prime}\left(  I,\theta\right)
\cup\mathfrak{D}^{\prime}\left(  -I,\theta\right)  \cup I^{\circ},
\]
and $\theta\in(0,\pi].$ If $I=\overline{1,0,-1}$ and $\theta=\pi/2,$ for
example, $\mathfrak{D}\left(  I,\theta,\theta\right)  =\Delta$ and
$\mathfrak{D}^{\prime}\left(  I,\theta\right)  =\Delta^{+}$ is the upper half
disk of $\Delta.$

Let $\Sigma=\left(  f,\overline{U}\right)  \in\mathcal{F}$ and let
$p\in\partial U$. If $f$ is injective near $p$, then $f$ is homeomorphic in a
closed Jordan neighborhood $N_{p}$ of $p$ in $U$, and then $f(N_{p})$ is a
closed Jordan domain on $S$ whose boundary near $f(p$) is an SCC arc, or two
SCC arcs joint at $f(p)$, and thus the interior angle of $f(N_{p})$ at $f(p)$
is well defined, called the interior angle of $\Sigma$ at $p$ and denoted by
$\angle(\Sigma,p).$

In general, we can draw some paths $\{\beta_{j}\}_{j=1}^{k}$ in $\overline{U}$
with $\cup_{j=1}^{k}\beta_{j}\backslash\{p\}\subset U$ and $\beta_{j}\cap
\beta_{i}=\{p\}\ $if $i\neq j,$ such that each $\left(  f,\beta_{j}\right)  $
is a simple line segment on $S$, $\cup_{j=1}^{k}\beta_{j}$ divides a closed
Jordan neighborhood $N_{p}$ of $p$ in $\overline{U}$ into $k+1$ closed Jordan
domains $\overline{U_{j}}\ $with $p\in\overline{U_{j}},j=1,\dots,k+1,$ and
$U_{i}\cap U_{j}=\emptyset\ $if $i\neq j,$ and $f$ restricted to
$\overline{U_{j}}$ is a homeomorphism with $\left(  f,\overline{U_{j}}\right)
\in\mathcal{F}$ for each $j.$ Then the interior angle of $\Sigma$ at $p$ is
defined by
\[
\angle\left(  \Sigma,p\right)  =\sum_{j=1}^{k+1}\angle\left(  \left(
f,\overline{U_{j}}\right)  ,p\right)  .
\]

\begin{theorem}
\label{st}\label{230330-11:14 copy(1)}(i). (Stoilow's Theorem \cite{S}
pp.120--121) Let $U$ be a domain on $\overline{\mathbb{C}}$ and let
$f:U\rightarrow S$ be an open, continuous and discrete mapping. Then there
exist a domain $V$ on $\overline{\mathbb{C}}$ and a homeomorphism
$h:V\rightarrow U,$ such that $f\circ h:V\rightarrow S$ is a holomorphic mapping.

\label{230330-11:14 copy(2)}(ii). Let $\Sigma=(f,\overline{U})$ be a surface
where $U$ is a domain on $\overline{\mathbb{C}}.$ Then there exists a domain
$V$ on $\overline{\mathbb{C}}$ and an OPH $h:\overline{V}\rightarrow
\overline{U}$ such that $f\circ h:\overline{V}\rightarrow S$ is a holomorphic mapping.

\label{230330-11:14 copy(3)}(iii) Let $\Sigma=(f,\overline{U})\in\mathbf{F}.$
Then there exists an OPH $\varphi:\overline{U}\rightarrow\overline{U}$ such
that $f\circ\varphi$ is holomorphic on $U$.
\end{theorem}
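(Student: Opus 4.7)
The plan is to prove the three parts in order: treat (i) as a black box (it is Stoilow's classical factorization theorem, cited to \cite{S}) and bootstrap from it to derive (ii) and (iii). The essential observation is that a surface $\Sigma=(f,\overline{U})$ by definition admits an extension of $f$ to an OPCOFOM on some open neighborhood $W$ of $\overline{U}$ in $\overline{\mathbb{C}}$; since finite-to-one implies discrete, (i) is applicable to this extension, and the resulting homeomorphism can in fact be chosen to be an OPH because $f$ itself is orientation-preserving. (At any regular point $q$ of the holomorphic map $f\circ H$ one has $H=f^{-1}\circ(f\circ H)$ locally as a composition of two orientation-preserving maps, which pins down the orientation of $H$ at $q$, and then by connectedness of the domain of $H$ everywhere.)

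For part (ii) I would apply this orientation-preserving form of Stoilow to $f:W\rightarrow S$, obtaining a domain $W'\subset\overline{\mathbb{C}}$ and an OPH $H:W'\rightarrow W$ with $f\circ H$ holomorphic on $W'$. Setting $V=H^{-1}(U)$ and $h:=H|_{\overline{V}}$, the map $h:\overline{V}\rightarrow\overline{U}$ is an OPH (since $H$ is and $\overline{V}=H^{-1}(\overline{U})$), and $f\circ h$ is the restriction of $f\circ H$, hence holomorphic in a neighborhood of $\overline{V}$.

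For part (iii), assume $\Sigma=(f,\overline{U})\in\mathbf{F}$, so $\partial U$ is a Jordan curve. Applying (ii) produces a domain $V$ and an OPH $h:\overline{V}\rightarrow\overline{U}$ with $f\circ h$ holomorphic. Since $h$ carries $\partial V$ homeomorphically onto $\partial U$, $V$ is itself a Jordan domain on $\overline{\mathbb{C}}$; by the Riemann mapping theorem together with Carath\'eodory's boundary-extension theorem there exists an OPH $\psi:\overline{U}\rightarrow\overline{V}$ restricting to a biholomorphism $U\rightarrow V$. Then $\varphi:=h\circ\psi:\overline{U}\rightarrow\overline{U}$ is an OPH and
\[
f\circ\varphi=(f\circ h)\circ\psi
\]
is holomorphic on $U$ as the composition of a holomorphic map with a biholomorphism. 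The only non-routine step I anticipate is the orientation bookkeeping in the passage from (i) to (ii): Stoilow's theorem in its most classical formulation only produces a homeomorphism, and upgrading it to an OPH relies on the local normal form $f\circ H(\zeta)=\zeta^{k}$ near a branch point together with the hypothesis that $f$ is orientation-preserving, which forces the orientation of $H$ uniformly on all of $W'$.
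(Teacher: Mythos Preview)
Your proposal is correct and follows essentially the same route as the paper: extend $f$ to an OPCOFOM on a neighborhood of $\overline{U}$, apply Stoilow there, and restrict; then for (iii) invoke the Riemann mapping theorem together with Carath\'eodory's extension to pull the Stoilow domain $V$ back to $U$. Your explicit remark on why the Stoilow homeomorphism can be taken orientation-preserving is in fact more careful than the paper, which simply asserts the OPH property without justification.
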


What $f$ is discrete means that $f^{-1}(w)\cap K$ is finite for any compact
subset $K$ of $U.$

\begin{proof}
\label{230330-11:14 copy(4)}\label{ST1}Let $\Sigma=(f,\overline{U})$ be a
surface where $U$ is a domain on $\overline{\mathbb{C}}.$ Then $f:\overline
{U}\rightarrow S$ is the restriction of an OPCOFOM $g$ defined in a
neighborhood $U_{1}$ of $\overline{U},$ and thus by Stoilow's theorem, there
exists a domain $V_{1}$ on $\overline{\mathbb{C}}$ and an OPH $h:V_{1}%
\rightarrow U_{1}$ such that $g\circ h$ is holomorphic on $V_{1}$ and then for
$\overline{V}=h^{-1}(\overline{U}),$ $f\circ h$ is holomorphic on
$\overline{V},$ and thus (ii) holds.

\label{230330-11:14 copy(5)}Continue the above discussion and assume $U$ is a
Jordan domain. Then $V$ is also a Jordan domain and by Riemann mapping theorem
there exists a conformal mapping $h_{1}$ from $U$ onto $V$ and by
Caratheodory's extension theorem $h_{1}$ can be extended to be homeomorphic
from $\overline{U}$ onto $\overline{V},$ and thus the extension of $h\circ
h_{1}$ is the desired mapping $\varphi$ in (iii).
\end{proof}

\label{samecurve}For two curves $(\alpha_{1},[a_{1},b_{1}])$ and $(\alpha
_{2},[a_{2},b_{2}])$ on $S,$ we call they equivalent and write
\[
(\alpha_{1},[a_{1},b_{1}])\sim(\alpha_{2},[a_{2},b_{2}])
\]
if there is an increasing homeomorphism $\tau:[a_{1},b_{1}]\rightarrow\lbrack
a_{2},b_{2}]$ such that $\alpha_{2}\circ\tau=\alpha_{1}.$ For two surfaces
$(f_{1},\overline{U_{1}})$ and $(f_{2},\overline{U_{2}})$, we call they
equivalent and write $(f_{1},\overline{U_{1}})\sim(f_{2},\overline{U_{2}})\ $
if there is an orientation-preserving homeomorphism (OPH) $\phi:\overline
{U_{1}}\rightarrow\overline{U_{2}}$ such that $f_{2}\circ\phi=f_{1}.$
\label{holomo}

By our convention , for any covering surface $\Sigma=(f,\overline{U})$ over
$S,$ $f$ is the restriction of an OPCOFOM $\widetilde{f}$ defined on a Jordan
neighborhood $V$ of $\overline{U}$. By Theorem \ref{st}, there is a
self-homeomorphism $h$ of $V$ such that $f\circ h$ is holomorphic on $V$.
Thus, $\Sigma$ is equivalent to the covering surface $(g,\overline{U_{1}}),$
where $\overline{U_{1}}=h^{-1}(\overline{U})$ and $g=f\circ h$ is holomorphic
on $\overline{U_{1}}$. For any two equivalent surfaces $\Sigma_{1}%
=(f_{1},\overline{U_{1}})$ and $\Sigma_{2}=(f_{2},\overline{U_{2}})$, we have
$A(f_{1},\overline{U_{1}})=A(f_{2},\overline{U_{2}})$, $L(f_{1},\partial
\overline{U_{1}})=L(f_{2},\partial\overline{U_{2}})$ and $\overline{n}%
(f_{1},E_{q})=\overline{n}(f_{2},E_{q})$ for a fixed set $E_{q}$. Thus we can
identify the equivalent surfaces and for any surface $\Sigma=(f,\overline{U}%
)$, we may assume $f$ is holomorphic in $U$.

Theorem \ref{st} is a powerful tool to explain the connection between OPCOFOM
and the holomorphic map. The following lemma is a consequence of Theorem
\ref{st}. We shall denote by $D(a,\delta)$ the disk on $S$ with center $a$ and
spherical radius $\delta$. Then $\Delta\subset S$ is the disk $D(0,\pi/2)$.

\begin{lemma}
\label{cov-1}Let $(f,\overline{U})\ $be a surface, $U$ be a domain on
$\mathbb{C}$ bounded by a finite number of Jordan curves and $\left(
f,\partial U\right)  $ is consisted of a finite number of simple circular arcs
and let $q\in f(\overline{U}).$ Then, for sufficiently small disk
$D(q,\delta)$ on $S\ $with $\delta<\pi/2,$ $f^{-1}(\overline{D(q,\delta)}%
)\cap\overline{U}$ is a finite union of disjoint sets $\{\overline{U_{j}%
}\}_{1}^{n}$ in $\overline{U},$ where each $U_{j}$ is a Jordan domain in $U,$
such that for each $j,$ $\overline{U_{j}}\cap f^{-1}(q)$ contains exactly one
point $x_{j}$ and (A) or (B) holds:

(A) $x_{j}\in U_{j}\subset\overline{U_{j}}\subset U$ and $f:\overline{U_{j}%
}\rightarrow\overline{D(q,\delta)}$ is a BCCM such that $x_{j}$ is the only
possible branch point.

(B) $x_{j}\in\partial U,$ $f$ is locally homeomorphic on $\overline{U_{j}%
}\backslash\{x_{j}\},$ and when $\left(  f,\overline{U}\right)  \in
\mathcal{F}, $ the following conclusions (B1)--(B3) hold:

(B1) The Jordan curve $\partial U_{j}$ has a partition $\alpha_{1}\left(
p_{1},x_{j}\right)  +\alpha_{2}\left(  x_{j},p_{2}\right)  +\alpha_{3}\left(
p_{2},p_{1}\right)  $ such that $\alpha_{1}+\alpha_{2}=\left(  \partial
U\right)  \cap\partial U_{j}$ is an arc of $\partial U,$ $\alpha_{3}^{\circ
}\subset U,$ $c_{j}=\left(  f,\alpha_{j}\right)  $ is an SCC arc for $j=1,2,$
and $c_{3}=\left(  f,\alpha_{3}\right)  $ is a locally SCC\footnote{The
condition $\delta<\pi/2$ makes $\partial D\left(  q,\delta\right)  $ strictly
convex, and it is possible that $\left(  f,\alpha_{3}^{\circ}\right)  $ may
describes $\partial D\left(  q,\delta\right)  $ more than one round, and in
this case $\left(  f,\alpha_{3}^{\circ}\right)  $ is just locally SCC.} arc in
$\partial D\left(  q,\delta\right)  $ from $q_{2}=f\left(  p_{2}\right)  $ to
$q_{1}=f\left(  p_{1}\right)  $. Moreover, $f$ is homeomorphic in a
neighborhood of $\alpha_{j}\backslash\{x_{j}\}$ in $\overline{U}$ for $j=1,2,$
and
\[
\partial\left(  f,\overline{U_{j}}\right)  =\left(  f,\partial U_{j}\right)
=c_{1}+c_{2}+c_{3}.
\]

(B2) The interior angle of $\left(  f,\overline{U_{j}}\right)  $ at $p_{1}$
and $p_{2}$ are both contained in $[\frac{7\pi}{16},\frac{9\pi}{16}].$

(B3) There exists a rotation $\psi$ of $S$ with $\psi(q)=0$ such that one of
the following holds:

(B3.1) $q_{1}=q_{2},\left(  f,\alpha_{1}\right)  =\overline{q_{1}q}%
=\overline{q_{2}q}=-\left(  f,\alpha_{2}\right)  ,$ say, $\left(  f,\alpha
_{1}+\alpha_{2}\right)  =\overline{q_{1}q}+\overline{qq_{1}},$ and $\left(
\psi\circ f,\overline{U_{j}}\right)  $ is equivalent to the
surface\footnote{Here $\delta z^{\omega_{j}}$ is regarded as the mapping
$z\mapsto\delta z^{\omega_{j}}\in S,z\in\overline{\Delta^{+}},$ via the
stereographic projection $P.$} $\left(  \delta z^{\omega_{j}}:\overline
{\Delta^{+}}\right)  $ on $S$ so that
\[
\left(  \delta z^{\omega_{j}},[-1,1]\right)  =\overline{a_{\delta}%
,0}+\overline{0,a_{\delta}},
\]
where $\omega_{j}$ is an even positive integer and $a_{\delta}\in\left(
0,1\right)  $ with $d\left(  0,a_{\delta}\right)  =\delta.$

(B3.2) $q_{1}\neq q_{2},$ as sets $c_{1}\cap c_{2}=\{q\},$ and $\left(
\psi\circ f,\overline{U_{j}}\right)  $ is equivalent to the the surface
$\left(  F,\overline{\Delta^{+}}\cup\overline{\mathfrak{D}_{1}^{\prime}}%
\cup\overline{\mathfrak{D}_{2}^{\prime}}\right)  $ so that the following holds.

(B3.2.1) $\mathfrak{D}_{1}^{\prime}=\mathfrak{D}^{\prime}\left(
\overline{-1,0},\theta_{1}\right)  \ $and $\mathfrak{D}_{2}^{\prime
}=\mathfrak{D}^{\prime}\left(  \overline{0,1},\theta_{2}\right)  $, such that
for each $j=1,2,\theta_{j}\in\lbrack0,\frac{\pi}{4}].$ Moreover $\theta_{1}=0$
(or $\theta_{2}=0$) when $c_{1}=\overline{q_{1}q}$ (or $c_{2}=\overline
{qq_{2}}$), and in this case $\mathfrak{D}_{1}^{\prime}=\emptyset$ (or
$\mathfrak{D}_{2}^{\prime}=\emptyset$). See Definition \ref{lune-lens} for the
notation $\mathfrak{D}^{\prime}\left(  \cdot,\cdot\right)  .$

(B3.2.2) $\left(  F,\overline{\Delta^{+}}\right)  $ is the surface $T=\left(
\delta z^{\omega_{j}},\overline{\Delta^{+}}\right)  $, where $\omega_{j}\ $is
a positive number which is not an even number and even may not be an integer,
$\left(  F,\overline{\mathfrak{D}_{1}^{\prime}}\right)  $ is the lune
$\psi\left(  \overline{\mathfrak{D}^{\prime}\left(  \overline{q_{1}q}%
,c_{1}\right)  }\right)  $ and $\left(  F,\overline{\mathfrak{D}_{2}^{\prime}%
}\right)  $ is the lune $\psi\left(  \overline{\mathfrak{D}^{\prime}\left(
\overline{qq_{2}},c_{2}\right)  }\right)  .$ That is to say, $\left(
f,\overline{U_{j}}\right)  $ is obtained by sewing\label{sew40} the sector
$\psi^{-1}\left(  T\right)  $ with center angle\footnote{This angle maybe
larger than $2\pi$ as the sector $\left(  z^{3},\overline{\Delta^{+}}\right)
$ at $0$.} $\omega_{j}\pi,$ and the closed lunes $\overline{\mathfrak{D}%
^{\prime}\left(  \overline{q_{1}q},c_{1}\right)  }$ and $\overline
{\mathfrak{D}^{\prime}\left(  \overline{qq_{2}},c_{2}\right)  }$ along
$\overline{q_{1}q}$ and $\overline{qq_{2}}$ respectively.
\end{lemma}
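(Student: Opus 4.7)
The plan is to first invoke Stoilow's Theorem (Theorem \ref{st}(iii)) to replace $(f,\overline{U})$ by an equivalent surface on which $f$ is holomorphic in $U$ and an OPCOFOM on $\overline{U}$; equivalence preserves all structure in the conclusion. Since $f$ is finite-to-one and $\overline{U}$ is compact, the fiber $f^{-1}(q)\cap\overline{U}=\{x_1,\dots,x_n\}$ is finite. By continuity and openness of $f$, and by choosing $\delta>0$ small enough (uniformly across the $x_j$'s), I can arrange that (a) $f^{-1}(\overline{D(q,\delta)})\cap\overline{U}$ decomposes as a disjoint union of closed connected neighborhoods $\overline{U_j}$ of the $x_j$, each lying in any prescribed open neighborhood of $x_j$; and (b) for every $x_j\in\partial U$, the two SCC subarcs of $\partial U$ incident to $x_j$ are the only pieces of $\partial U$ that meet $\overline{D(q,\delta)}$ near $x_j$, and each crosses $\partial D(q,\delta)$ transversely at a single point.

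In case (A), $x_j\in U$ and the holomorphic local normal form at an interior zero of $f-q$ yields homeomorphisms $\varphi_j,\psi_j$ with $\psi_j\circ f\circ\varphi_j^{-1}(\zeta)=\zeta^{k_j}$, whence $\overline{U_j}$ is a closed topological disk and $f|_{\overline{U_j}}:\overline{U_j}\to\overline{D(q,\delta)}$ is a BCCM with $x_j$ the only possible branch point. In case (B), $x_j\in\partial U$, and the two boundary subarcs through $x_j$ exit $\overline{D(q,\delta)}$ at points $p_1,p_2$; splitting at $x_j$ produces $\alpha_1(p_1,x_j)$ and $\alpha_2(x_j,p_2)$ whose images $c_1,c_2$ are SCC arcs from $q_1$ and to $q_2$. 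The third arc $\alpha_3(p_2,p_1)$ in $U$ is obtained as the unique component of $f^{-1}(\partial D(q,\delta))\cap\overline{U_j}$ joining $p_2$ to $p_1$; that $\partial U_j=\alpha_1+\alpha_2+\alpha_3$ is a Jordan curve and that $(f,\alpha_3)$ is locally SCC both follow, after applying Stoilow in a neighborhood of $x_j$, from the standard picture of the preimage of a small disk under a holomorphic map defined on a boundary-touching half-neighborhood, which is a finite union of closed sectors joined along radii.

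The angle bound (B2) is a transversality estimate: near $p_i$ the map $f$ is a local homeomorphism, so the interior angle of $(f,\overline{U_j})$ at $p_i$ equals the angle at $q_i=f(p_i)$ between the fixed SCC arc $c_i$ (ending at $q$) and $\partial D(q,\delta)$. As $\delta\to 0$ the contact point $q_i$ is sampled arbitrarily close to $q$, while $\partial D(q,\delta)$ becomes orthogonal to the radial geodesic from $q$; uniformly in $p_i$ the angle tends to $\pi/2$, so shrinking $\delta$ places it in $[7\pi/16,9\pi/16]$. For (B3), after a rotation $\psi$ sending $q$ to $0$ and composing with the Stoilow homeomorphism, $\psi\circ f$ is modeled locally at $x_j$ by $\zeta\mapsto \delta\zeta^{\omega_j}$; the dichotomy comes from the two tangent directions of $c_1,c_2$ at $q$. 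If these coincide as a single geodesic, both boundary rays at $x_j$ fold onto $\overline{q_1 q}$, which forces $\omega_j$ to be an even positive integer and gives (B3.1). Otherwise $c_1$ and $c_2$ issue from $q$ along distinct geodesic tangents, and one builds $(f,\overline{U_j})$ by sewing the sector $(\delta\zeta^{\omega_j},\overline{\Delta^+})$ to the two closed lunes $\overline{\mathfrak{D}^\prime(\overline{q_1q},c_1)}$ and $\overline{\mathfrak{D}^\prime(\overline{qq_2},c_2)}$ along $\overline{q_1q}$ and $\overline{qq_2}$, yielding (B3.2); the bound $\theta_j\le\pi/4$ is a restatement of (B2) in this planar model. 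The main obstacle is not any single step but the bookkeeping required to match the local holomorphic normal form at $x_j$ with the global topology of $\overline{U_j}$ out to $\partial D(q,\delta)$, which forces the uniform choice of $\delta$ in step (a) above and uses the finite-to-one property to preclude remote accumulation of preimages on $\partial U_j$.
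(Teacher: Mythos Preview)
Your approach is correct and essentially the same as the paper's, which offers only a two-sentence sketch: (A) follows directly from Stoilow's theorem, and (B) follows from (A) by extending $f$ to an OPCOFOM in a full neighborhood of $x_j$ in $\mathbb{C}$ and then intersecting with $\overline{U}$. Two minor points: you should cite Theorem~\ref{st}(ii) rather than (iii), since the lemma allows $\partial U$ to have several Jordan components; and for (B) the paper's move of extending $f$ past $\partial U$ before applying Stoilow (rather than working directly on a ``half-neighborhood'') is what makes the sector-plus-lunes description in (B3) transparent, so you may want to make that extension step explicit.
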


\begin{proof}
(A) follows from Stoilow's theorem directly when $x_{j}\in U$. (B) follows
from (A) and the assumption $\left(  f,\overline{U}\right)  \in\mathcal{F}$,
by considering the extension of $f$ which is an OPCOFOM in a neighborhood of
$x_{j}$ in $\mathbb{C}.$
\end{proof}

\begin{remark}
\label{notation}We list more elementary conclusions deduced from the previous
lemma directly and more notations. Let $\Sigma=(f,\overline{U})\in\mathcal{F}%
$, $q\in f(\overline{U}),$ $\delta,$ $x_{j},$ $U_{j}$ and $\alpha_{1}%
+\alpha_{2}$ be given as in Lemma \ref{cov-1}.

(A) If for some $j,$ $x_{j}\in\Delta,$ then by Lemma \ref{cov-1} (A), $f$ is a
BCCM in the neighborhood $U_{j}$ of $x_{j}$ in $\Delta,$ and the order
$v_{f}(x_{j})$ of $f$ at $x_{j}$ is well defined, which is a positive integer,
and $f$ is a $v_{f}(x_{j})$-to-$1$ CCM on $U_{j}\backslash\{x_{j}\}.$

(B) If for some $j,x_{j}\ $is contained in $\partial\Delta,$ then, using
notations in Lemma \ref{cov-1} (B), there are two possibilities:

(B1) $q_{1}=q_{2},$ the interior angle of $\Sigma$ at $x_{j}$ equals
$\omega_{j}\pi,$ and the order $v_{f}\left(  x_{j}\right)  $ is defined to be
$\omega_{j}/2,$ which is a positive integer.

(B2) $q_{1}\neq q_{2},c_{1}+c_{2}$ is a simple arc from $q_{1}$ to $q,$ and
then to $q_{2}$. In this case the interior angle of $\Sigma$ at $x_{j}$ equals
$\omega_{j}\pi+\varphi_{1}+\varphi_{2},$ where $\varphi_{1}$ and $\varphi_{2}$
are the interior angles of $\mathfrak{D}^{\prime}\left(  \overline{q_{1}%
q},c_{1}\right)  $ and $\mathfrak{D}^{\prime}\left(  \overline{qq_{2}}%
,c_{2}\right)  $ at the cusps, and we defined the order of $f$ at $x_{j}$ to
be the least integer $v_{f}\left(  x_{j}\right)  $ with $v_{f}\left(
x_{j}\right)  \geq\left(  \omega_{j}\pi+\varphi_{1}+\varphi_{2}\right)  /2\pi
$. Since $\omega_{j}\pi+\varphi_{1}+\varphi_{2}\geq\omega_{j}\pi>0,$ we have
$v_{f}\left(  x_{j}\right)  \geq1$ and $f$ is injective on $U_{j}%
\backslash\left\{  c_{1}+c_{2}\right\}  $ iff $v_{f}\left(  x_{j}\right)  =1.$
This is also easy to see by Corollary \ref{cov-2} (v).

(C) The number $v_{f}(x_{j})$ can be used to count path lifts with the same
initial point $x_{j}:$ when $x_{j}\in\Delta,$ any sufficiently short line
segment on $S$ starting from $q=f(x_{j})\ $has exactly $v_{f}\left(
x_{j}\right)  $ $f$-lifts starting from $x_{j}$ and disjoint in $\Delta
\backslash\{x_{j}\};$ and when $x_{j}\in\partial\Delta,$ for each arc $\beta$
of the two sufficiently short arcs of $\partial\Delta$ with initial point
$x_{j}$, $(f,\beta)$ is simple and has exactly $v_{f}(x_{j})-1$ $f$-lifts
$\left\{  \beta_{j}\right\}  _{j=1}^{v_{f}\left(  x_{j}\right)  -1}$ with the
same initial point $x_{j},$ $\beta_{j}\backslash\{x_{j}\}\subset\Delta$ for
each $j$ and they are disjoint in $\Delta.$ This is also easy to see by
Corollary \ref{cov-2} (v).

(D) A point $x\in\overline{U}$ is called a \emph{branch point} of $f$ (or
$\Sigma$) if $v_{f}(x)>1,$ or otherwise called a regular point if
$v_{f}\left(  x\right)  =1.$ We denote by $C_{f}$ the set of all branch points
of $f,$ and $CV_{f}$ the set of all branch values of $f.$ For a set
$A\subset\overline{U},$ we denote by $C_{f}\left(  A\right)  =C_{f}\cap A$ the
set of branch points of $f$ located in $A,$ and by $CV_{f}(K)=CV_{f}\cap
K\label{CVF}$ the set of branch values of $f$ located in $K\subset S.$ We will
write%
\[
C_{f}^{\ast}\left(  A\right)  =C_{f}\left(  A\right)  \backslash f^{-1}%
(E_{q})\mathrm{\ and\ }C_{f}^{\ast}=C_{f}\backslash f^{-1}(E_{q})=C_{f}\left(
\overline{U}\right)  \backslash f^{-1}(E_{q}).
\]

(E) For each $x\in\overline{U},$ $b_{f}\left(  x\right)  =v_{f}\left(
x\right)  -1\ $is called the branch number of $f$ at $x,$ and for a set
$A\subset\overline{U}$ we write $B_{f}\left(  A\right)  =\sum_{x\in A}%
b_{f}\left(  x\right)  .$ Then we have $b_{f}\left(  x\right)  \neq0$ iff
$C_{f}\left(  x\right)  =\{x\},$ and $B_{f}\left(  A\right)  =\sum_{x\in
C_{f}\left(  A\right)  }b_{f}(x).$ We also define%
\[
B_{f}^{\ast}\left(  A\right)  =B_{f}\left(  A\backslash f^{-1}(E_{q})\right)
.
\]
Then $B_{f}^{\ast}\left(  A\right)  \geq0,$ equality holding iff $C_{f}^{\ast
}\left(  A\right)  =\emptyset.$ When $A=\overline{U}$ is the domain of
definition of $f,$ we write%
\[
B_{f}=B_{f}\left(  \overline{U}\right)  \mathrm{\ and\ }B_{f}^{\ast}%
=B_{f}^{\ast}\left(  \overline{U}\right)  .\label{BBC}%
\]

\end{remark}

Now we can state a direct Corollary to Lemma \ref{cov-1}.

\begin{corollary}
\label{cov-2}\label{230330-22:41 copy(1)}Let $\Sigma=\left(  f,\overline
{U}\right)  \in\mathcal{F}$ and let $\left(  x_{1},U_{1}\right)  $ be a disk
of $\Sigma$ with radius $\delta_{1}.$ Then, the following hold.

(i) $f$ is locally homeomorphic on $\overline{U_{1}}\backslash\{x_{1}\}$; and
if $\left(  x_{1},U_{1}^{\prime}\right)  $ is another disk of $\Sigma$ with
radius $\delta_{1}^{\prime}>\delta_{1},$ then $\overline{U_{1}}\subset
U_{1}^{\prime},$ whether $x_{1}\ $is in $\partial U$ or $U$.

(ii) If $f$ is homeomorphic in some neighborhood of $x_{1}$ in $\overline{U}$
(which may be arbitrarily small), or if $f$ locally homeomorphic on
$\overline{U}$, then the disk $\left(  x_{1},\overline{U_{1}}\right)  $ is a
one sheeted closed domain of $\Sigma,$ say, $f$ restricted to $\overline
{U_{1}}$ is a homeomorphism onto $f\left(  \overline{U_{1}}\right)  .$

(iii) For each $x_{2}\in U_{1}\backslash\{x_{1}\},$ any closed disk $\left(
x_{2},\overline{U_{2}}\right)  $ of $\Sigma$ is a one sheeted closed domain of
$\Sigma,$ moreover, $\overline{U_{2}}\subset U_{1}$ when the radius of
$\left(  x_{2},U_{2}\right)  $ is smaller than $\delta-d\left(  f(x_{1}%
),f(x_{2})\right)  .$

(iv) If $x_{1}\in\partial U$, $f$ is regular at $x_{1}$ and $\left(
f,\partial U\right)  $ is circular near $x_{1},$ then $\left(  f,\overline
{U_{1}}\right)  $ is a \emph{convex} and one sheeted closed domain of
$\Sigma,$ which is in fact the closed lens $\overline{\mathfrak{D}\left(
I,c_{1},c_{1}^{\prime}\right)  }$, where $c_{1}$ and $c_{1}^{\prime}$ are
circular subarcs of $\partial\Sigma$ and the circle $\partial D\left(
f(x_{1}),\delta_{1}\right)  ,$ $I$ is the common chord, and the three paths
$c_{1},-c_{1}^{\prime},I$ have the same initial point. Moreover, if
$\partial\Sigma$ is straight at $x_{1},$ then $f(\overline{U_{1}}%
)=\overline{\mathfrak{D}^{\prime}\left(  -I,c_{1}^{\prime}\right)  }%
=\overline{\mathfrak{D}^{\prime}\left(  -c_{1},c_{1}^{\prime}\right)  }\ $is
half of the disk $\overline{D(f(x_{1}),\delta_{1})}$ on the left hand side of
diameter $c_{1}$ (see Definition \ref{lune-lens} for lenses and lunes).

(v) For any $x\in\overline{U_{1}},$ there exists a path $I\left(
x_{1},x\right)  $ in $\overline{U_{1}}$ from $x_{1}$ to $x$ such that
$I\left(  x_{1},x\right)  $ is the unique $f$-lift of $\overline{f(x_{1}%
)f(x)}.$ That is to say, $\left(  f,\overline{U_{1}}\right)  $ can be foliated
by the family of straight line segments $\{\left(  f,I\left(  x_{1},x\right)
\right)  :x\in\partial U_{1}\}$ which are disjoint in $\overline{U_{1}%
}\backslash\{x_{1}\}.$

(vi) For each $x\in\partial U,$ the interior angle of $\Sigma$ at $x$ is positive.
\end{corollary}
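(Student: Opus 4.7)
The six items will each follow from the precise local description of a disk $(x_1, U_1)$ given in Lemma \ref{cov-1}. When $x_1 \in U$, case (A) of that lemma says $f|_{\overline{U_1}}$ is a BCCM modelled near $x_1$ by $\zeta \mapsto \zeta^{k_1}$; when $x_1 \in \partial U$, cases (B3.1) and (B3.2) give explicit models as either the folded half-disk $(\delta z^{\omega}, \overline{\Delta^+})$ with $\omega$ a positive even integer, or a sector $(\delta z^{\omega}, \overline{\Delta^+})$ sewn to at most two lunes. In every one of these models the only point at which $f$ can fail to be locally injective is $x_1$ itself, yielding the first assertion of (i). The nested inclusion $\overline{U_1} \subset U_1'$ for radii $\delta_1 < \delta_1'$ follows because $\overline{D(q,\delta_1)} \subset D(q,\delta_1')$, so the closed lift $\overline{U_1}$ must sit inside the open set $U_1'$, its boundary being mapped strictly into the open disk of radius $\delta_1'$.

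Parts (ii) and (iii) are then bookkeeping. For (ii), the hypothesis that $f$ is a local homeomorphism at $x_1$ forces $v_f(x_1)=1$ and rules out the folded model (B3.1) (where $f$ is $2$-to-$1$ on the boundary diameter through $x_1$), leaving either case (A) with $k_1=1$ or case (B3.2) with $\omega=1$ and no non-degenerate sewing; both models are manifestly injective on $\overline{U_1}$. For (iii), part (i) says $f$ is locally injective at any $x_2\in U_1\setminus\{x_1\}$, so (ii) applies to every disk at $x_2$. The containment $\overline{U_2}\subset U_1$ when $\delta_2 < \delta_1 - d(f(x_1),f(x_2))$ follows from $\overline{D(f(x_2),\delta_2)} \subset D(f(x_1),\delta_1)$: the continuous $f$-lift of this smaller disk starting at $x_2$ cannot meet $\partial U_1$ because its $f$-image avoids $\partial D(f(x_1),\delta_1)$, and so is confined to $U_1$ by connectedness.

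The main geometric content is in (iv). The assumption that $(f,\partial U)$ is (simple) circular near $x_1$ excludes case (B3.1), where the image boundary folds onto itself, so we must be in (B3.2); combined with $v_f(x_1)=1$ this forces $\omega=1$ and the two boundary arcs $c_1,c_2$ through $q=f(x_1)$ to lie on a single circle $\Gamma$. By (ii), $f|_{\overline{U_1}}$ is a homeomorphism onto its image. Writing $c_1^{\mathrm{bdy}}$ for the SCC subarc of $\partial\Sigma$ in $f(\partial U_1)$ and $c_1'$ for the SCC subarc of $\partial D(q,\delta_1)$ in $f(\partial U_1)$, these two arcs share the same pair of endpoints (the points of $\Gamma\cap\partial D(q,\delta_1)$) and, together with the common chord $I$ between those endpoints, bound the claimed lens $\overline{\mathfrak{D}(I, c_1^{\mathrm{bdy}}, c_1')}$; its convexity follows because both boundary arcs are themselves convex circular arcs bulging away from $I$. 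When $\partial\Sigma$ is moreover straight at $x_1$, the arc $c_1^{\mathrm{bdy}}$ is a great-circle arc and coincides as a set with $I$, one of the two lunes of the lens collapses, and $f(\overline{U_1})$ reduces to the half-disk $\overline{\mathfrak{D}'(-c_1^{\mathrm{bdy}}, c_1')}$.

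For (v), we may assume $f$ is holomorphic on $U$ by Theorem \ref{st}(iii); then near $x_1$ the map is $\zeta\mapsto\zeta^{v_f(x_1)}$ in suitable charts (or the boundary variant in (B3.2)). The geodesic $\overline{q\,f(x)}$ has exactly $v_f(x_1)$ $f$-lifts starting at $x_1$, one in each of the $v_f(x_1)$ sectors meeting at $x_1$; by (i) each lift extends uniquely through the regular part of $\overline{U_1}$ until it first reaches a preimage of $f(x)$, and exactly one of them ends at $x$. This produces $I(x_1,x)$ and shows the foliation property. Finally, (vi) follows from the additive definition of $\angle(\Sigma,x)$: the decomposition of a closed neighborhood of $x$ in $\overline{U}$ into finitely many closed Jordan pieces on which $f$ is a homeomorphism expresses $\angle(\Sigma,x)$ as a sum of strictly positive angles, hence positive. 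The only nontrivial step in this program is identifying the image in part (iv) as precisely the closed lens (and its degenerate half-disk form); the rest is organized around Lemma \ref{cov-1} and part (i).
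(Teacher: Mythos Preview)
Your proposal is correct and follows exactly the route the paper intends: the paper gives no explicit proof of this corollary, stating only that it is ``a direct Corollary to Lemma \ref{cov-1},'' and your argument unpacks each item from the explicit local models (A), (B3.1), (B3.2) supplied there. The one point worth tightening is in (iv), where ``$(f,\partial U)$ is circular near $x_1$'' should be read as \emph{simple} circular (otherwise (B3.1) with $\omega_j=2$ would survive and the conclusion would fail); you note this parenthetically, and it is consistent with the paper's SCC conventions.
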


Lemma \ref{cov-1} also implies a criterion of regular point.

\begin{lemma}
\label{inj}Let $(f,\overline{U})\in\mathcal{F}.$ Then the following hold.

(A) For each $a\in\overline{U},f$ restricted to some neighborhood of $p$ in
$\overline{U}$ is a homeomorphism if one of the following alternatives holds.

(A1) $p\in U$ and $p$ is a regular point of $f.$

(A2) $p\in\partial U,$ $p$ is a regular point of $f$ and $(f,\partial U)$ is
simple in a neighborhood of $p$ on $\partial U$.

(B) For any SCC arc $\left(  f,\alpha\right)  $ of $\partial\Sigma=\left(
f,\partial U\right)  ,$ $f$ restricted to a neighborhood of $\alpha^{\circ}$
in $\overline{U}$ is a homeomorphic if and only if $h$ has no branch point on
$\alpha^{\circ}.$ Here $\alpha^{\circ}$ means the interior of the arc $\alpha$.
\end{lemma}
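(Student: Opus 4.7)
The lemma should follow directly from the local picture of $f$ given by Lemma~\ref{cov-1}, combined with the counting interpretation of $v_f$ in Remark~\ref{notation}. For part (A1), I would apply Lemma~\ref{cov-1}(A) at the interior regular point $p$: this provides a closed disk neighborhood $\overline{U_1}$ of $p$ on which $f$ is a BCCM onto $\overline{D(f(p),\delta)}$, with $p$ as the only possible branch point. Since $v_f(p)=1$, there is in fact no branch point on $\overline{U_1}$, so the map is an unbranched CCM onto a simply connected target, hence a homeomorphism.

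For part (A2), I would apply Lemma~\ref{cov-1}(B) at the boundary point $p$ to obtain the disk $\overline{U_j}$ together with the boundary decomposition described in case (B3.1) or (B3.2). The hypothesis that $(f,\partial U)$ is simple near $p$ rules out (B3.1) at once, since there $(f,\alpha_1+\alpha_2)=\overline{q_1q}+\overline{qq_1}$ retraces the same geodesic and is not injective. So we are in (B3.2), and by Remark~\ref{notation}(B2) the regularity $v_f(p)=1$ is equivalent to injectivity of $f$ on $\overline{U_j}\setminus(c_1+c_2)$; combined with injectivity of $f$ on $c_1+c_2$ (from the assumed simplicity of $(f,\partial U)$ near $p$), this yields injectivity on the compact set $\overline{U_j}$, hence the desired homeomorphism.

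Part (B) is immediate in the ``only if'' direction: a local homeomorphism on a neighborhood of $\alpha^{\circ}$ is locally injective, so by the path-lift count in Remark~\ref{notation}(C) it admits no branch point on $\alpha^{\circ}$. For the ``if'' direction, every $p\in\alpha^{\circ}$ satisfies the hypotheses of (A2), so by Corollary~\ref{cov-2}(iv) there is a closed lens neighborhood $\overline{V_p}\subset\overline{U}$ of spherical radius $\delta_p>0$ on which $f$ is a homeomorphism onto the lens $f(\overline{V_p})$. I would choose $\delta_p$ positive and lower-semicontinuous on $\alpha^{\circ}$, tending to $0$ at the endpoints of $\alpha$, and set $N=\bigcup_{p\in\alpha^{\circ}}V_p'$, where $V_p'$ is the open sublens of radius $\delta_p/2$. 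The main technical step, and the main obstacle, is to show $f|_N$ is injective: given $x\in V_{p_1}'$ and $y\in V_{p_2}'$ with $f(x)=f(y)$ and, say, $\delta_{p_1}\leq\delta_{p_2}$, the spherical triangle inequality places $f(x)$ and $f(p_1)$ inside $f(\overline{V_{p_2}})$, and the unique path-lift property in Corollary~\ref{cov-2}(v), applied to the geodesics from $f(p_2)$ to $f(p_1)$ and to $f(x)$, should force both $p_1$ and $x$ into $\overline{V_{p_2}}$, whence $x=y$ by injectivity of $f|_{\overline{V_{p_2}}}$. The delicate point is to ensure that the lift of $f(p_1)$ in $\overline{V_{p_2}}$ is really $p_1$ rather than some other preimage; this follows from the SCC (and hence injective) nature of the arc $(f,\alpha)$ together with a tube-lemma argument applied to compact subarcs of $\alpha^{\circ}$.
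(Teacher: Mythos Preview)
Your approach is sound and essentially matches the paper's intent: the paper gives no proof at all, merely prefacing the lemma with ``Lemma~\ref{cov-1} also implies a criterion of regular point,'' so your detailed argument via Lemma~\ref{cov-1}, Remark~\ref{notation}, and Corollary~\ref{cov-2} is exactly the kind of unpacking the authors leave to the reader.

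One small remark on part~(B): your tube argument for global injectivity of $f|_N$ is the right idea, and the ``delicate point'' you flag---that the preimage of $f(p_1)$ in $\overline{V_{p_2}}$ is $p_1$ itself---is indeed handled by the SCC hypothesis, since $\overline{V_{p_2}}\cap\partial U$ is a subarc of $\alpha$ mapped bijectively by $f$ onto a subarc of $c$ containing $f(p_1)$. Once $p_1\in\overline{V_{p_2}}$ is established, the simplest way to conclude $x\in\overline{V_{p_2}}$ is to note that $\overline{V_{p_1}}\subset\overline{V_{p_2}}$ whenever $\delta_{p_1}\le\delta_{p_2}-d(f(p_1),f(p_2))$ (by Corollary~\ref{cov-2}(iii) applied with roles reversed), which your radius-halving and triangle-inequality estimate already guarantees; this avoids the path-lift detour through Corollary~\ref{cov-2}(v). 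Either way, the argument closes.
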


The hypothesis in condition (A2) that $(f,\partial U)$ is simple cannot be
ignored. See the following example.

\begin{example}
Take $f(z)=z^{2}$ for any $z\in\overline{\Delta^{+}}$. then $f$ is regular at
$z=0$ but not injective in any neighborhood of $0$ in $\overline{\Delta^{+}}$.
\end{example}

The following lemma shows that how to sew two surfaces together into one
surface along the equivalent curves. This is an important tool in Section 4.

\begin{lemma}
\label{patch}\label{last}
For $j=1,2,$ let $\Sigma_{j}=(f_{j},\overline{U_{j}})$ be a surface and let
$\alpha_{j}=\alpha_{j}\left(  x_{j1},x_{j2}\right)  $ be a proper arc of
$\partial U_{j}$ such that $\left(  f_{j},\alpha_{j}\right)  $ is a simple arc
with distinct endpoints. If
\begin{equation}
(f_{1},\alpha_{1})\sim-(f_{2},\alpha_{2}), \label{pp13}%
\end{equation}
then $(f_{1},\overline{U_{1}})$ and $(f_{2},\overline{U_{2}})\ $can be
\emph{sewn along}\label{sew38} $\left(  f_{1},\alpha_{1}\right)  $ to become a
surface $\Sigma_{3}=(f_{3},\overline{\Delta})$, such that the following hold:

(i) There exist orientation-preserving homeomorphisms (OPHs) \label{OPH}
$h_{1}:\overline{U_{1}}\rightarrow\overline{\Delta^{+}}$ and $h_{2}%
:\overline{U_{2}}\rightarrow\overline{\Delta^{-}},$ called
\emph{identification mappings} (\emph{IM}s), such that
\begin{equation}
(h_{1},\alpha_{1})\sim\lbrack-1,1]\sim-\left(  h_{2},\alpha_{2}\right)
=\left(  h_{2},-\alpha_{2}\right)  , \label{pp5}%
\end{equation}%
\begin{equation}
f_{1}\circ h_{1}^{-1}\left(  x\right)  =f_{2}\circ h_{2}^{-1}(x),\forall
x\in\lbrack-1,1], \label{pp6}%
\end{equation}
and%
\begin{equation}
f_{3}(z)=\left\{
\begin{array}
[c]{l}%
f_{1}\circ h_{1}^{-1}(z),z\in\overline{\Delta^{+}},\\
f_{2}\circ h_{2}^{-1}(z),z\in\overline{\Delta^{-}}\backslash\lbrack-1,1],
\end{array}
\right.  \label{pp7}%
\end{equation}
is a well defined OPCOFOM, and we have the equivalent relations
\[
(f_{3},\overline{\Delta^{+}})\sim(f_{1},\overline{U_{1}}),(f_{3}%
,\overline{\Delta^{-}})\sim(f_{2},\overline{U_{2}}),
\]%
\[
\partial\Sigma_{3}=\left(  f_{3},\left(  \partial\Delta\right)  ^{+}\right)
+\left(  f_{3},\left(  \partial\Delta\right)  ^{-}\right)  \sim\left(
f_{1},\left(  \partial U_{1}\right)  \backslash\alpha_{1}^{\circ}\right)
+\left(  f_{2},\left(  \partial U_{2}\right)  \backslash\alpha_{2}^{\circ
}\right)  ,
\]
and
\[
(f_{3},[-1,1])\sim(f_{1},\alpha_{1})\sim(f_{2},-\alpha_{2}).
\]

(ii)
\[
L(\partial\Sigma_{3})=L(\partial\Sigma_{1})+L(\partial\Sigma_{2}%
)-2L(f_{2},\alpha_{2}),
\]%
\[
A(\Sigma_{3})=A\left(  \Sigma_{1}\right)  +A(\Sigma_{2}),
\]%
\[
\overline{n}\left(  \Sigma_{3}\right)  =\overline{n}\left(  \Sigma_{1}\right)
+\overline{n}\left(  \Sigma_{2}\right)  +\#\left(  \gamma^{\circ}\cap
E_{q}\right)  ,
\]
and%
\begin{equation}
R(\Sigma_{3})=R(\Sigma_{1})+R(\Sigma_{2})-4\pi\#\left(  \gamma^{\circ}\cap
E_{q}\right)  . \label{RRR-1}%
\end{equation}

(iii) $z\in C_{f_{3}}\left(  \overline{\Delta}\backslash\{-1,1\}\right)  $ if
and only if $h_{1}^{-1}(z)\in C_{f_{1}}\left(  \overline{U_{1}}\backslash
\partial\alpha_{1}\right)  \mathrm{\ or\ }h_{2}^{-1}(z)\in C_{f_{2}}\left(
\overline{U_{2}}\backslash\partial\alpha_{2}\right)  $. In particular, if
$f_{1}(\partial\alpha_{1})\subset E_{q},$ then $f_{2}(\partial\alpha
_{2})\subset E_{q}$ and in addition
\[
CV_{f_{3}}(S\backslash E_{q})=CV_{f_{1}}(S\backslash E_{q})\cup CV_{f_{2}%
}(S\backslash E_{q}).
\]

\end{lemma}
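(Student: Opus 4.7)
The plan is to construct the identification mappings $h_1,h_2$ explicitly via Riemann mapping plus Carathéodory extension, glue $f_1$ and $f_2$ across the seam, and then verify all four conclusions. First I would fix the IMs: by Theorem \ref{st}(iii) and Carathéodory's boundary extension theorem, there exists an OPH $h_1:\overline{U_1}\to\overline{\Delta^+}$ sending the ordered boundary triple $(x_{11},x_{12},y_1)$ to $(-1,1,i)$ for any chosen third marker $y_1\in\partial U_1\setminus\alpha_1$, and carrying $\alpha_1$ to $[-1,1]$; similarly $h_2:\overline{U_2}\to\overline{\Delta^-}$ sending $\alpha_2$ to $-[-1,1]$, so that (\ref{pp5}) holds. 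Because $(f_1,\alpha_1)$ and $-(f_2,\alpha_2)$ are equivalent curves via some increasing reparametrization $\tau$, I would post-compose $h_2$ with a self-homeomorphism of $\overline{\Delta^-}$ fixing $\partial\Delta$ pointwise outside a small arc (extending $\tau$ transported to $[-1,1]$) to enforce (\ref{pp6}). Defining $f_3$ by (\ref{pp7}) is then unambiguous on the seam $[-1,1]$.

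The second and main task is to verify that $f_3$ is an OPCOFOM on $\overline{\Delta}$. Continuity and openness at any interior point of $\Delta^+$ or $\Delta^-$ are inherited from $f_1,f_2$. At an interior seam point $z_0\in(-1,1)$, continuity follows from (\ref{pp6}), and openness follows by invoking Lemma \ref{cov-1} applied to each $f_j$ at $h_j^{-1}(z_0)\in\alpha_j^\circ$: each $f_j$ is locally a branched model on a half-disk whose boundary image is exactly $(f_j,\alpha_j)$, so the two half-disk germs fit together to cover a full disk $D(f_3(z_0),\delta)$ on $S$, possibly with branching. The delicate point is at the two endpoints $z_0=\pm 1$, where three local pictures must be reconciled: Lemma \ref{cov-1}(B) gives normal forms for $f_j$ near $h_j^{-1}(\pm 1)$ consisting of a sector glued to two lunes, and gluing the two sectors along their shared SCC arc of image reproduces exactly one of the normal forms in Lemma \ref{cov-1}(B3), so $f_3$ is locally a BCCM there. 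Finite-to-oneness and orientation-preservation are then immediate from the same property of $f_j$. The equivalences listed in (i) follow tautologically.

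For (ii), the length formula is just additivity of length along the partition $\partial\Delta=(\partial\Delta)^+\cup(\partial\Delta)^-$ combined with $L(f_1,\alpha_1)=L(f_2,\alpha_2)$ from (\ref{pp13}), which together discard $2L(f_2,\alpha_2)$ from $L(\partial\Sigma_1)+L(\partial\Sigma_2)$. Area additivity is immediate since the seam has planar measure zero. For the covering number, I would count $w$-points of $f_3$ in $\Delta$: preimages of $w\in S\setminus(f_1,\alpha_1)$ split into those in $\Delta^+$ and $\Delta^-$, contributing $\overline{n}(\Sigma_1,w)+\overline{n}(\Sigma_2,w)$; for $w\in\gamma^\circ\cap E_q$ there is exactly one additional preimage on the open seam $(-1,1)$, which was a boundary (hence uncounted) $w$-point of both $\Sigma_1$ and $\Sigma_2$ but is now an interior point of $\Delta$. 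Summing over $E_q$ produces the displayed formula, and (\ref{RRR-1}) then follows algebraically from $R(\Sigma)=(q-2)A(\Sigma)-4\pi\overline{n}(\Sigma,E_q)$.

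Finally, (iii) is the local-branch statement. For $z\in\overline{\Delta}\setminus\{-1,1\}$ lying in the open half-disk $\Delta^\pm$, $v_{f_3}(z)=v_{f_j}(h_j^{-1}(z))$ trivially. For $z\in(-1,1)$, Lemma \ref{inj} combined with the gluing picture shows $f_3$ is locally injective at $z$ iff both $f_1$ and $f_2$ are locally injective at the corresponding points, giving the iff in (iii). For $z\in\partial\Delta\setminus\{-1,1\}$ the same identification applies. Under the added hypothesis $f_1(\partial\alpha_1)\subset E_q$, (\ref{pp13}) forces $f_2(\partial\alpha_2)\subset E_q$, so the only possibly new branch points of $f_3$ (at $\pm 1$) have values in $E_q$; hence $CV_{f_3}(S\setminus E_q)$ is exactly the union of the corresponding sets for $f_1,f_2$. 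The principal obstacle throughout is the endpoint analysis at $\pm 1$, and Lemma \ref{cov-1}(B3) supplies the precise normal form needed to close it.
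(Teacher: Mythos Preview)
Your proposal is correct and follows essentially the same route as the paper, though you supply considerably more detail than the paper does. The only substantive difference is in how the identification maps are built: the paper fixes $h_1$ arbitrarily with $h_1(\alpha_1)=[-1,1]$, then defines $h_2$ on $\alpha_2$ directly by $h_2(y)=h_1(\varphi^{-1}(y))$ (where $\varphi:\alpha_1\to-\alpha_2$ is the reparametrization witnessing (\ref{pp13})) and extends to $\overline{U_2}$, which yields (\ref{pp6}) immediately without the post-composition step you describe; the paper then declares (ii) trivial and for (iii) checks only the seam interior $(-1,1)$, omitting the endpoint analysis at $\pm1$ and the explicit OPCOFOM verification that you carry out via Lemma~\ref{cov-1}.
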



\begin{proof}
The conclusion (i) in fact gives a routine how to sew \label{sew36}$\Sigma
_{1}$ and $\Sigma_{2}$. By (\ref{pp13}), there exists an OPH\footnote{Note
that $-\alpha_{2}$ is the same path with opposite direction, not the set
$\{-y:y\in\alpha_{2}\}.$} $\varphi:\alpha_{1}\rightarrow-\alpha_{2}$ such
that
\[
\left(  f_{1},\alpha_{1}\right)  =\left(  f_{2}\circ\varphi,\alpha_{1}\right)
,
\]
that is
\[
f_{2}\left(  \varphi(x)\right)  \equiv f_{1}(x),\forall x\in\alpha_{1}.
\]

Let $h_{1}:\overline{U_{1}}\rightarrow\overline{\Delta^{+}}$ be any OPH such
that $h_{1}(\alpha_{1})=[-1,1].$ Then let $h_{2}:\overline{U_{2}}%
\rightarrow\overline{\Delta^{-}}$ be an OPH such that
\begin{equation}
h_{2}\left(  y\right)  \equiv h_{1}\left(  \varphi^{-1}(y\right)  ),\forall
y\in\alpha_{2}. \label{pp14}%
\end{equation}
In fact, $h_{2}|_{\alpha_{2}}$ defined by (\ref{pp14}) is an OPH from
$\alpha_{2}$ onto $[1,-1]$ and can be extended to be an OPH $h_{2}$ from
$\overline{U_{2}}$ onto $\overline{\Delta^{-}}.$ The pair of $h_{1}$ and
$h_{2}$ are the desired mappings satisfying (i). Then (ii) is trivial to verify.

To prove (iii) we may assume that $\Sigma_{1}$ and $\Sigma_{2}$ are the
surfaces $\Sigma_{\pm}=(f_{\pm},\overline{\Delta^{\pm}})$ such that $f_{\pm}$
agree on $[-1,1],$ and then $f_{3}$ defined by $f_{\pm}$ on $\overline
{\Delta^{\pm}}\ $is an OPLM. When $x\in(-1,1)$ is a branch point of $f_{+}$ or
$f_{-},$ $x$ is obviously a branch point of $f_{3}.$ Since $f_{\pm}$ are the
restrictions of $f_{3}$ to $\overline{\Delta^{\pm}},$ and $\left(
f_{+},[-1,1]\right)  $ and $\left(  f_{-},[1,-1]\right)  $ are simple with
opposite direction, if $x\in\left(  -1,1\right)  $ is not a branch point of
$f_{\pm},$ then $f_{\pm}$ are homeomorphisms in neighborhoods $V^{\pm}$ of $x$
in $\overline{\Delta^{\pm}}$ and the simple arc $\left(  f_{3},[-1,1]\right)
$ separates $f_{+}(V^{+}\backslash\lbrack-1,1])$ and $f_{-}(V^{-}%
\backslash\lbrack-1,1])$, and thus $f_{3}$ is homeomorphic on a neighborhood
of $x$ and so $x$ cannot be a branch point of $f_{3}.$ Therefore $x\in
C_{f_{3}}$ iff $x\in C_{f_{1}}\cup C_{f_{2}}.$ In consequence we have
$C_{f_{3}}\left(  \overline{\Delta}\backslash\{-1,1\}\right)  =C_{f_{1}%
}\left(  \overline{\Delta^{+}}\backslash\{-1,1\}\right)  \cup C_{f_{2}}\left(
\overline{\Delta^{-}}\backslash\{-1,1\}\right)  ,$ and (iii) follows.

\end{proof}

\begin{remark}
The condition $(f_{1},\alpha_{1})\sim(f_{2},-\alpha_{2})$ is crucial. Two
copies of the hemisphere $\overline{S^{+}}$ cannot be sewn along their common
edge $\overline{0,1}\subset S$ to become a surface in $\mathcal{F},$ but
$\overline{S^{+}}$ and $\overline{S^{-}}$, with natural edges $\overline{0,1}$
and $-\overline{0,1}=\overline{1,0}$ respectively, can be sewn along
$\overline{0,1}$ to become a surface in $\mathcal{F}$.
\end{remark}

Lemma \ref{patch} will be used frequently when we patch the covering surfaces.
The condition in this lemma that $\alpha_{j}$ are proper arcs of $\partial
U_{j}$ can be replaced by that one of the curves $\alpha_{1}$ and $\alpha_{2}$
is proper. Indeed, if only $\alpha_{1}$ is proper, then we can find partitions
$\alpha_{1}=\alpha_{11}+\alpha_{12}$ and $\alpha_{2}=\alpha_{21}+\alpha_{22}$
so that $\alpha_{11}\sim\alpha_{21}$ and $\alpha_{12}\sim\alpha_{22}$, and we
can use Lemma \ref{patch} twice.

For a surface $\Sigma=(f,\overline{U})$ and an arc $\beta$ on $S$, we define
the lift of $\beta$ by $f$ as an arc $\alpha$ in $U$ satisfying that $(f,
\alpha) \sim\beta$. By Remark \ref{notation}, for any point $p\in U$, a
sufficiently short path $\beta$ from $f(p)$ has exactly $v_{f}(p)$ lifts from
$p$.


\begin{lemma}
\label{glue}\label{glue2}\label{cut-3}Let $\Sigma=(f,\overline{U}%
)\in\mathcal{F}$, $p_{0}\in\overline{U}$ and $\beta$ a polygonal simple path
on $S$ with distinct endpoints. Assume that $\beta$ has two $f$-lifts
$\alpha_{j}$, $j=1,2,$ with initial point $p_{0},$ such that $\alpha
_{1}^{\circ}\cap\alpha_{2}^{\circ}=\emptyset$. Then

(i) $f(\overline{U})=S$ if $\alpha_{1}$ and $\alpha_{2}$ terminate at the same
point; moreover, $\left(  f,\overline{U}\right)  $ can be sewn along $\left(
f,\alpha_{1}\right)  \sim\left(  f,\alpha_{2}\right)  $ becoming a closed
surface $\left(  f_{0},S\right)  .$

(ii) If $\alpha_{1}\cup\alpha_{2}$ is a proper arc of $\partial U$, then the
following (ii1) and (ii2) hold.

\quad(ii1) $(f,\overline{U})$ can be sewn along $\beta$ to become a covering
surface $\Sigma_{1}=(g,\overline{\Delta})\in\mathcal{F},$ such that
\begin{align*}
A(g,\Delta)  &  =A(f,U),\\
L(g,\partial\Delta)  &  =L(f,\partial U)-L(f,\alpha_{1}\cup\alpha_{2})\\
&  =L(f,\partial U)-2L(\beta),\\
\overline{n}\left(  \Sigma_{1},E_{q}\right)   &  =\overline{n}\left(
\Sigma,E_{q}\right)  +\#\{f\left(  \left(  \alpha_{1}\cup\alpha_{2}\right)
^{\circ}\right)  \cap E_{q}\}\\
&  =\overline{n}\left(  \Sigma,E_{q}\right)  +\#\{[\beta^{\circ}\cup
\{f(p_{0})\}]\cap E_{q}\}.
\end{align*}

\quad(ii2) $\left(  f,N\right)  $ and $\left(  g,N_{1}\right)  $ are
equivalent surfaces, where $N=\overline{U} \backslash\left(  \alpha_{1}%
\cup\alpha_{2}\right)  $ and $N_{1}=\overline{\Delta}\backslash[0,1]$, and
thus $(f, (\partial U) \backslash\left(  \alpha_{1}\cup\alpha_{2}\right)
^{\circ} )$, regarded as a closed curve, is equivalent to $(g,\partial
\Delta).$

(iii) If $\alpha_{1}\subset\partial U,\alpha_{2}\backslash\left\{
p_{0}\right\}  \subset U,$ the terminal point of $\beta$ is in $E_{q}$ but all
other points of $\beta$ are outside of $E_{q},$ then there exists a covering
surface $\Sigma_{1}$ such that
\begin{align*}
R(\Sigma_{1})  &  =R(\Sigma)+4\pi,\\
L(\partial\Sigma_{1})  &  =L(\partial\Sigma),
\end{align*}
and $\partial\Sigma_{1}$ is equivalent to the closed curve $\partial\Sigma.$
\end{lemma}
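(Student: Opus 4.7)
The strategy is to realize each $\Sigma_1$ (and the closed $(f_0,S)$ in (i)) by a surgery on $\Sigma$, exploiting that two $f$-lifts $\alpha_1,\alpha_2$ of the same polygonal simple arc $\beta$ satisfy $(f,\alpha_1)\sim(f,\alpha_2)$ and hence can be processed by the sewing Lemma \ref{patch}.

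For (i), the shared endpoints make $\alpha_1\cdot(-\alpha_2)$ a Jordan closed curve in $\overline{U}$ with disjoint interiors, which bounds a Jordan subdomain $W\subseteq\overline{U}$. On $\overline{W}$ the open map $f$ sends $\partial W$ into $\beta$, so $f(W)$ is open with topological boundary contained in the 1-dimensional set $\beta$. Since $\beta$ is a simple polygonal arc, $S\setminus\beta$ is connected, which forces $f(W)=S\setminus\beta$, hence $f(\overline U)\supseteq f(\overline W)=S$. For the closed surface, put on $\overline{U}$ the equivalence $\alpha_1(t)\sim\alpha_2(t)$; the quotient is topologically $S^2$ (folding one boundary/interior arc onto another collapses the disk into a sphere), and $f$ descends consistently because $f(\alpha_1(t))=\beta(t)=f(\alpha_2(t))$, so the induced $f_0$ inherits OPCOFOM from $f$.

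For (ii), both lifts lie on $\partial U$, share $p_0$, and have disjoint interiors, so by the anticlockwise orientation they emanate in opposite directions from $p_0$. To fold $\alpha_1$ onto $\alpha_2$ via Lemma \ref{patch}, I would choose an interior arc $\gamma\subset U$ from $p_0$ to a point on $\partial U\setminus(\alpha_1\cup\alpha_2)$ that separates $\alpha_1$ from $\alpha_2$, producing two Jordan subdomains $U',U''$ with $\alpha_1\subset\partial U'$ and $\alpha_2\subset\partial U''$. Apply Lemma \ref{patch} twice: once along $\gamma$ (re-gluing the interior cut) and once along $\alpha_1$ and $-\alpha_2$ via $(f,\alpha_1)\sim\beta\sim-(f,-\alpha_2)$. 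The length and area equalities then fall out of Lemma \ref{patch}(ii), and the covering formula reflects that the seam interior together with the identified point $p_0$ covers precisely $\beta^\circ\cup\{f(p_0)\}$; the equivalence (ii2) between $(f,N)$ and $(g,N_1)$ is immediate from the identification maps in Lemma \ref{patch}(i).

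For (iii), I would combine two moves. First, slit $\overline{U}$ along $\alpha_2$: the interior preimage $q_2\in f^{-1}(\beta(1))\cap U\subset f^{-1}(E_q)$ becomes a boundary tip, so $\bar n$ drops by $1$ while the perimeter grows by $2L(\beta)$. Second, fold $\alpha_1\subset\partial U$ onto one of the two slit copies of $\alpha_2$ by invoking Lemma \ref{patch} (again after an auxiliary interior cut splits the slit surface into two Jordan subdomains). Because $(f,\alpha_1)\sim\beta\sim(f,\alpha_2)$, the $f$-image of the new boundary coincides, as a closed curve, with $(f,\partial U)=\partial\Sigma$, and the perimeter gain $2L(\beta)$ from the slit is exactly cancelled by the perimeter loss $2L(\beta)$ from the fold. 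Since $\beta^\circ\cap E_q=\emptyset$, the seam interior contributes no new $E_q$ preimages, giving $\bar n(\Sigma_1)=\bar n(\Sigma)-1$ and $R(\Sigma_1)=R(\Sigma)+4\pi$. The main obstacle is precisely in (iii): the three points $p_0,q_1,q_2$ are coupled by the identifications, and one must verify that the resulting corner is an admissible (possibly branched) boundary point of $\mathcal{F}$ and that orientations of the slit and fold combine to produce a genuine Jordan surface with the asserted boundary equivalence. Parts (i) and (ii) are largely a matter of arranging the topological setup so that Lemma \ref{patch} applies directly.
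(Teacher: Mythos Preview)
Your treatment of (i) is fine and matches the paper's: a Jordan subdomain $W$ bounded by $\alpha_1\cup\alpha_2$ together with openness of $f$ (the paper phrases it as the argument principle) yields $f(\overline W)=S$, and the quotient identification gives the closed surface.

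The real gap is in your approach to (ii). After you cut $U$ along an auxiliary arc $\gamma$ into $U'$ and $U''$ and then glue $U'$ to $U''$ along $\alpha_1\sim -(-\alpha_2)$ via Lemma~\ref{patch}, the two copies $\gamma',\gamma''$ of $\gamma$ land on the boundary of the new surface and \emph{share the endpoint coming from $p_0$}. Thus ``re-gluing along $\gamma$'' is not an application of Lemma~\ref{patch} (which sews two distinct surfaces along a single pair of arcs) but another instance of exactly the situation in (ii): two boundary $f$-lifts with common initial point forming a proper subarc of the boundary. Your argument is therefore circular. The paper avoids this entirely with a one-shot construction: choose an orientation-preserving homeomorphism $\phi:\overline{\Delta^+}\to\overline U$ with $\phi([0,1])=\alpha_1$, $\phi([-1,0])=-\alpha_2$, and $f\circ\phi(x)=f\circ\phi(-x)$ on $[0,1]$; then define $g(re^{i\theta})=f\circ\phi\bigl(re^{i\theta/2}\bigr)$ for $z=re^{i\theta}\in\overline\Delta$, $\theta\in[0,2\pi]$. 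The branched cover $re^{i\theta}\mapsto re^{i\theta/2}$ performs the identification $\alpha_1\sim\alpha_2$ directly, and all the equalities in (ii1) and the equivalence in (ii2) are immediate from this parametrization.

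Your plan for (iii)---slit along $\alpha_2$ to push the interior $E_q$-preimage to the boundary, then fold $\alpha_1$ onto one slit copy---is exactly what the paper does, but the paper phrases the second step as an application of part (ii) (now already proved) rather than of Lemma~\ref{patch}. Once (ii) is in hand, the corner/orientation worries you flag in (iii) dissolve: the slit surface $\Sigma'=(f\circ h,\overline{\Delta^+})$ has $[0,1]$ and $\alpha_1'$ as two boundary $f$-lifts of $\beta$ with common initial point, and (ii) applies verbatim, giving $\overline n(\Sigma_1)=\overline n(\Sigma)-1$ and $L(\partial\Sigma_1)=L(\partial\Sigma)$ as claimed.
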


\begin{proof}
We first consider that $\alpha_{1}$ and $\alpha_{2}$ have the same terminal
point. Then they bound a Jordan domain $V$ in $U$, and thus $f(\overline
{U})\supset f(\overline{V})=S$ by the argument principle. One the other hand,
we can sew the closed domain $\overline{V}$ by identifying $\alpha_{1}$ and
$\alpha_{2}$ so that the points $x\in\alpha_{1}$ and $y\in\alpha_{2}$ are
identified if and only if $f(x)=f(y),$ to obtain the surface $S.$ Then
$(f,\overline{V})$ becoming a closed surface $\left(  f_{0},S\right)  $. So
(i) holds true.

To prove (ii), we may assume that $\alpha_{1}$ and $\partial U$ have the same
orientation. Then $\alpha_{2}$ and $\partial U$ have opposite orientations,
and there exists an orientation-preserving homeomorphism $\phi:\overline
{\Delta^{+}}\rightarrow\overline{U}$ with $\phi([0,1])=\alpha_{1},$
$\phi([-1,0])=-\alpha_{2}$ and $f\circ\phi(x)=f\circ\phi(-x)$ for any
$x\in\lbrack0,1].$ Let $g(z)=f\circ\phi(re^{i\theta/2})$ with $z=re^{i\theta
}\in\overline{\Delta}$, $\theta\in[0,2\pi]$. Then, $\Sigma_{1}=(g,\overline
{\Delta})\in\mathcal{F}$ is a covering surface which satisfies the conclusion
of (ii).

To prove (iii), let $h$ be an OPCOFOM map from $\overline{\Delta^{+}}$ onto
$\overline{U}$ such that $h$ restricted to $\overline{\Delta^{+}}%
\setminus\lbrack-1,1]$ is a homeomorphism onto $\overline{U}\backslash
\alpha_{2}$. Moreover, we assume that $h$ maps both $[-1,0]$ and $[0,1]$
homeomorphically onto $\alpha_{2}$ with opposite direction, and maps the arc
$\alpha_{1}^{\prime}=\left\{  e^{\sqrt{-1}\theta}:\theta\in\lbrack0,\frac{\pi
}{2}]\right\}  $ homeomorphically onto $\alpha_{1}$. Then we consider the
surface $\Sigma^{\prime}=\left(  f\circ h,\overline{\Delta^{+}}\right)  $.
After rescaling the parameter of $\partial\Sigma^{\prime}$, we may assume that
$\Sigma^{\prime}$ satisfies (ii), with $\alpha_{1}$ and $\alpha_{2}$ of (ii)
being replaced by $[0,1]$ and $\alpha_{1}^{\prime}.$ Then by identifying
$\alpha_{1}^{\prime}$ and $[0,1]$ as in (ii), we can sew $\Sigma_{1}^{\prime}$
to obtain a new surface $\Sigma_{1}.$ It is clear that $\overline{n}%
(\Sigma_{1},E_{q})=\overline{n}(\Sigma_{1}^{\prime},E_{q})=\overline{n}%
(\Sigma,E_{q})-1$, and thus $\Sigma_{1}$ satisfies (iii).
\end{proof}

\begin{lemma}
\label{Ri}(\cite{Ri} p. 32--35) Let $\Sigma=(f,\overline{\Delta}%
)\in\mathcal{F}$ and $\beta$ be a path on $S$ with initial point $q_{1}$.
Assume that $\alpha\subset\overline{\Delta}$ is an $f$-lift of some subarc of
$\beta$ from $q_{1}$, and $\alpha^{\circ}\subset\Delta.$ Then $\alpha$ can be
extended to an $f$-lift $\alpha^{\prime}$ of a longer subarc of $\beta$ with
$\alpha^{\prime\circ}\subset\Delta,$ such that either $\alpha^{\prime}$
terminates at a point on $\partial\Delta,$ or $\alpha^{\prime}$ is the
$f$-lift of the whole path $\beta$.
\end{lemma}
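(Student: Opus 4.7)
The plan is to argue by a standard maximality/supremum argument, using the local branched-covering structure of $f$ provided by Lemma \ref{cov-1} and Corollary \ref{cov-2}. Parametrize $\beta:[0,1]\to S$ and $\alpha:[0,s]\to\overline{\Delta}$ so that $\alpha$ is an $f$-lift of $\beta|_{[0,s]}$ and $\alpha((0,s))\subset\Delta$. Consider
\[
T=\{\,t\in[s,1]:\beta|_{[0,t]}\text{ admits an }f\text{-lift extending }\alpha\text{ with interior in }\Delta\,\},
\]
and set $t^{\ast}=\sup T$, which exists because $s\in T$.

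First I would establish uniqueness of such an extension: if $\widetilde{\alpha}_{1}$ and $\widetilde{\alpha}_{2}$ are two extensions of $\alpha$ to a common $[0,t]$, the locus where they coincide is closed by continuity, and it is open because the local normal form $\zeta\mapsto\zeta^{k}$ from Lemma \ref{cov-1}(A) pins down the outgoing sheet by the incoming tail at any interior point, whether branch or regular (at a regular point Corollary \ref{cov-2}(ii) gives an honest local homeomorphism, and at a branch point the sheet containing the tail is the unique one available for continuation). With uniqueness in hand, the partial lifts for an increasing sequence $t_{n}\nearrow t^{\ast}$ assemble into a single lift $\alpha^{\prime}$ on $[0,t^{\ast})$. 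By compactness of $\overline{\Delta}$ and continuity of $f$, $\alpha^{\prime}(t)$ has a limit $p^{\ast}\in\overline{\Delta}$ as $t\to t^{\ast}$ with $f(p^{\ast})=\beta(t^{\ast})$; this delivers a lift on $[0,t^{\ast}]$ whose interior stays in $\Delta$.

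Next I split on the location of $p^{\ast}$. If $p^{\ast}\in\partial\Delta$, then $\alpha^{\prime}$ terminates on $\partial\Delta$, which is the first admissible outcome. If $p^{\ast}\in\Delta$, Corollary \ref{cov-2}(i),(iii) supplies a closed BCCM neighborhood of $p^{\ast}$ lying inside $\Delta$; a short continuation of $\beta$ beyond $t^{\ast}$ then admits a unique $f$-lift through $p^{\ast}$ that remains in $\Delta$, with the incoming tail selecting the correct local sheet. If $t^{\ast}<1$ this would strictly enlarge the lift and contradict the maximality of $t^{\ast}$, so $t^{\ast}=1$ and $\alpha^{\prime}$ lifts all of $\beta$.

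The main obstacle is uniqueness and extension at interior branch points of $f$: there several local lifts of $\beta$ coexist, and one must verify that the incoming direction of $\alpha^{\prime}$ unambiguously selects the outgoing sheet so that the maximality argument closes. This is resolved by the local normal form of Lemma \ref{cov-1}(A) combined with the foliation by straight-line-segment leaves from Corollary \ref{cov-2}(v): $\alpha^{\prime}$ enters the BCCM neighborhood along a definite leaf, and the continuation is forced to follow the $f$-lift of $\beta$ through the corresponding local branch, which both prevents ambiguity in defining $\alpha^{\prime}$ and supplies the strict extension needed to rule out $t^{\ast}<1$ in Case 1.
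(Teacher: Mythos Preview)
The paper does not supply its own proof of this lemma; it is quoted from Rickman \cite{Ri}, pp.~32--35, and used as a black box. So there is no in-paper argument to compare against, and your task is really to give a self-contained proof. Your overall strategy---a maximality argument using the local BCCM structure from Lemma~\ref{cov-1}---is the right one and is essentially what Rickman does.

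There is, however, a genuine gap in your implementation. Your uniqueness claim at interior branch points is false: the incoming tail does \emph{not} pin down the outgoing sheet. In the local model $f(\zeta)=\zeta^{k}$ with $k\ge 2$, take $\beta(t)=t$ for $t\in[-1,1]$; the lift on $[-1,0]$ given by $\alpha(t)=i\sqrt{-t}$ reaches the branch point $0$ at $t=0$, and both $\alpha(t)=\sqrt{t}$ and $\alpha(t)=-\sqrt{t}$ are legitimate continuations for $t>0$. Thus the set where two extensions agree need not be open, and your assembly step---patching arbitrary lifts $\alpha_{n}$ on $[0,t_{n}]$ with $t_{n}\nearrow t^{\ast}$ into a single lift on $[0,t^{\ast})$---breaks down, since without uniqueness those $\alpha_{n}$ need not be nested. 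The standard repair is to abandon uniqueness and instead run Zorn's lemma on the poset of extensions of $\alpha$ (ordered by restriction): a chain defines a lift on a half-open interval, and one shows it extends to the closed interval. For that last step your appeal to ``compactness of $\overline{\Delta}$ and continuity of $f$'' is also insufficient, since compactness yields only subsequential limits; you must use that $f$ is finite-to-one together with Lemma~\ref{cov-1} to see that near $t^{\ast}$ the lift is trapped in a single component $\overline{U_{j}}$ of $f^{-1}\bigl(\overline{D(\beta(t^{\ast}),\delta)}\bigr)$, whence the BCCM model forces convergence to the unique preimage $x_{j}$ of $\beta(t^{\ast})$ in $\overline{U_{j}}$. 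With these two fixes your outline goes through.
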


The following lemma is obvious, which states that two different interior
branch points can be exchanged.

\begin{lemma}
\label{move}Let $\Sigma=(f,\Sigma)\in\mathcal{F}$, $b\in\Delta$ be a branch
point of $f$ with $v_{f}(b)=d$, and $\delta>0$ be a sufficiently small number.
Then there exists a Jordan neighborhood $V$ of $b$ in $\Delta$ such that
$f:\overline{V}\to\overline{D(f(b),\delta)} $ is a $d$-to-1 BCCM so that $b$
is the unique branch point, and for any $y_{1}$ with $d(f(b),y_{1})<\delta
\ $and any $b_{1}\in V,$ there exists a surface $\Sigma=(f_{1},\overline
{\Delta})\in\mathcal{F}$ such that $f_{1}$ restricted to $\overline{\Delta
}\backslash V$ equals $f$ and $f_{1}:\overline{V}\to\overline{D(f(b),\delta)}
$ is a $d$-to-1 branched covering map such that $b_{1}$ becomes the unique
branch point of $f_{1}$ in $V$, $y_{1}=f_{1}(b_{1})$ and $v_{f_{1}}%
(b_{1})=v_{f}(b).$
\end{lemma}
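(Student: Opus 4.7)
The plan splits cleanly into two parts: a local-structure part producing the neighborhood $V$, and a modification part producing $f_1$.

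For the first part, I would apply Lemma \ref{cov-1}(A) directly at the interior point $q = f(b)$. Since $b \in \Delta$ is a branch point with $v_f(b) = d$, for any sufficiently small $\delta > 0$ the component $\overline{V}$ of $f^{-1}(\overline{D(f(b),\delta)}) \cap \overline{\Delta}$ containing $b$ is a closed Jordan disk in $\Delta$ on which $f : \overline{V} \to \overline{D(f(b),\delta)}$ is a BCCM whose only branch point is $b$; this map is necessarily $d$-to-$1$ since $v_f(b) = d$. This settles the first assertion and gives the setup for the rest.

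For the second part, the idea is to produce $f_1$ on $\overline{V}$ by sandwiching $f$ between two ``pushing'' self-homeomorphisms, and to extend by $f$ elsewhere. I would first construct a self-homeomorphism $\tau$ of $\overline{V}$ that is the identity on $\partial V$ and satisfies $\tau(b_1) = b$, and a self-homeomorphism $\sigma$ of $\overline{D(f(b),\delta)}$ that is the identity on $\partial D(f(b),\delta)$ and satisfies $\sigma(f(b)) = y_1$. Both exist by a classical argument in disk topology (for instance, via an Alexander trick relative to the boundary, or a direct radial construction in a chart: on any closed Jordan disk, any two interior points can be interchanged by a self-homeomorphism that fixes the boundary pointwise). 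Then I would set
\[
f_1(z) \;=\;
\begin{cases}
\sigma \circ f \circ \tau(z), & z \in \overline{V}, \\
f(z), & z \in \overline{\Delta}\setminus V.
\end{cases}
\]
Because $\tau = \mathrm{id}$ on $\partial V$ and $\sigma = \mathrm{id}$ on $f(\partial V) = \partial D(f(b),\delta)$, the two branches agree on $\partial V$, so $f_1$ is continuous on $\overline{\Delta}$. On $\overline{V}$ it is the composition of an OPH, a $d$-to-$1$ BCCM, and an OPH, hence itself a $d$-to-$1$ BCCM onto $\overline{D(f(b),\delta)}$ whose only branch point is $\tau^{-1}(b) = b_1$ and whose only branch value is $\sigma(f(b)) = y_1$; in particular $v_{f_1}(b_1) = d = v_f(b)$. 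Outside $\overline{V}$ the map coincides with $f$ and is an OPCOFOM, and local homeomorphy is preserved across $\partial V$, so $f_1 : \overline{\Delta} \to S$ is an OPCOFOM and $\Sigma_1 = (f_1,\overline{\Delta}) \in \mathcal{F}$ has all the required properties.

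The only substantive obstacle is producing the two pushing homeomorphisms $\sigma$ and $\tau$, and this is standard; all remaining assertions reduce to direct verifications using the BCCM structure supplied by Lemma \ref{cov-1}(A) and the fact that pre- and post-composition of a BCCM with boundary-fixing homeomorphisms transforms branch points and branch values in the evident way while leaving boundary values unchanged.
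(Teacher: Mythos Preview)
Your proposal is correct. The paper itself provides no proof of this lemma, declaring it ``obvious,'' so your argument is a legitimate elaboration of what the authors take for granted; the construction via pre- and post-composition with boundary-fixing orientation-preserving self-homeomorphisms of $\overline{V}$ and $\overline{D(f(b),\delta)}$ is exactly the natural way to carry out the ``movement'' alluded to in the lemma's introductory sentence.
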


The following results are essentially consequences of argument principle.

\begin{lemma}
\label{continue0}Let $(f,\overline{\Delta})\in\mathcal{F}$ and let $D$ be a
Jordan domain on $S$ such that $f^{-1}$ has a univalent branch $g$ defined on
$D.$ Then $g$ can be extended to a univalent branch of $f^{-1}$ defined on
$\overline{D}$.
\end{lemma}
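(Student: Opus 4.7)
The plan is to construct the extension via a cluster-set argument, exploiting the finiteness of the fibers of $f$ and the local connectivity of a Jordan domain at its boundary points. By Theorem \ref{st}(iii), after replacing $f$ by $f\circ\varphi$ for an OPH $\varphi$ of $\overline{\Delta}$, I may assume $f$ is holomorphic on $\Delta$, so every fiber $f^{-1}(w)$ is finite. Set $V=g(D)\subset\overline{\Delta}$, so that $g\colon D\to V$ is a homeomorphism with $f|_V=g^{-1}$. For each $w_0\in\partial D$, define the cluster set
\[
L(w_0)=\{\,p\in\overline{\Delta}:\exists\,z_n\in D\text{ with }z_n\to w_0\text{ and }g(z_n)\to p\,\}.
\]
Continuity of $f$ gives $L(w_0)\subset f^{-1}(w_0)$, which is \emph{finite}.

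The key step is to show that $L(w_0)$ is connected; it will then be a connected subset of a finite set, hence a singleton $\{p_{w_0}\}$. Since $D$ is a Jordan domain, by Carath\'eodory's theorem it is locally connected at each boundary point: for every sufficiently small $\epsilon>0$, there is a unique connected component $C(w_0,\epsilon)$ of $D\cap D(w_0,\epsilon)$ whose closure contains $w_0$, and $\overline{C(w_0,\epsilon)}\to\{w_0\}$ as $\epsilon\to 0$. A short diagonal argument then identifies $L(w_0)=\bigcap_{\epsilon>0}\overline{g(C(w_0,\epsilon))}$, a nested intersection of compact connected subsets of $\overline{\Delta}$, hence itself connected. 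Therefore $L(w_0)=\{p_{w_0}\}$.

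I then define $\bar g\colon\overline{D}\to\overline{\Delta}$ by $\bar g|_D=g$ and $\bar g(w_0)=p_{w_0}$ for $w_0\in\partial D$. A routine $\epsilon$-$\delta$ check, using that each cluster set is a singleton and the fibers are finite, yields continuity of $\bar g$ on $\overline{D}$. The identity $f\circ g=\mathrm{id}_D$ then extends by continuity to $f\circ\bar g=\mathrm{id}_{\overline{D}}$, from which injectivity of $\bar g$ follows at once: if $\bar g(w_1)=\bar g(w_2)$, then $w_1=f(\bar g(w_1))=f(\bar g(w_2))=w_2$. Thus $\bar g$ is the required univalent extension of $g$ to $\overline{D}$.

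The main obstacle is the connectedness of $L(w_0)$, which depends decisively on local connectedness of $D$ at its boundary; once this is in place, collapsing a connected subset of a finite set to a single point uses only the discreteness of the fibers of $f$, and the remainder of the argument is topological bookkeeping.
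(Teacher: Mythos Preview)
Your proof is correct. The paper itself does not give a self-contained argument for this lemma: it only remarks that the result is essentially a consequence of the argument principle and that the proof is almost the same as that of Lemma~5.2 in \cite{Zh1}. Your route is genuinely different and more purely topological. Instead of invoking degree-theoretic or winding-number considerations, you exploit two facts: (i) the fibers $f^{-1}(w)$ are finite (which is already built into the OPCOFOM hypothesis, so your preliminary reduction to holomorphic $f$ via Theorem~\ref{st}(iii) is in fact unnecessary, though harmless), and (ii) a Jordan domain is locally connected at each boundary point (Schoenflies/Carath\'eodory). These combine to force every boundary cluster set of $g$ to be a connected subset of a finite set, hence a singleton, after which continuity and injectivity of the extension follow immediately from $f\circ\bar g=\mathrm{id}_{\overline{D}}$. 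The advantage of your approach is that it is entirely self-contained within the topological framework of the paper and does not rely on the external reference; the paper's intended argument-principle route presumably leans more heavily on the holomorphic structure.
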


The proof of this lemma is almost the same as that of Lemma 5.2 in \cite{Zh1}.

\begin{lemma}
\label{b-in}Let $D_{1}$ and $D_{2}$ be Jordan domains on $\mathbb{C}$ or $S$
and let $f:\overline{D_{1}}\rightarrow\overline{D_{2}}$ be a map such that
$f:\overline{D_{1}}\rightarrow f(\overline{D_{1}})$ is a homeomorphism. If
$f(\partial D_{1})\subset\partial D_{2},$ then $f(\overline{D_{1}}%
)=\overline{D_{2}}$.
\end{lemma}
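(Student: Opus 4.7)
The plan is to establish the equality in three short topological steps, in whichever ambient space ($\mathbb{C}$ or $S$) is relevant.

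First I would show that $f(\partial D_1)=\partial D_2$. Since $f|_{\overline{D_1}}$ is a homeomorphism onto its image, $f(\partial D_1)$ is a Jordan curve. It is also a compact connected subset of the Jordan curve $\partial D_2$, and any proper compact connected subset of a Jordan curve is either a single point or a closed arc, neither of which is homeomorphic to $S^{1}$. Hence $f(\partial D_1)$ cannot be a proper subset of $\partial D_2$, so the two Jordan curves coincide.

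Next I would argue that $f(D_1)\subset D_2$. The restriction $f|_{D_1}$ is a continuous injection of the $2$-dimensional manifold $D_1$ into the ambient $2$-sphere (or plane), so by invariance of domain the image $f(D_1)$ is open. Injectivity of $f$ on $\overline{D_1}$ together with the previous step gives
\[
f(D_1)\cap\partial D_2=f(D_1)\cap f(\partial D_1)=\emptyset,
\]
while the hypothesis provides $f(D_1)\subset\overline{D_2}$. Combining, $f(D_1)\subset\overline{D_2}\setminus\partial D_2=D_2$.

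Finally I would use a clopen argument inside $D_2$ to conclude $f(D_1)=D_2$. The set $f(D_1)$ is nonempty and open in $D_2$. It is also closed in $D_2$: if $y\in D_2$ is a limit of some sequence $f(x_n)$ with $x_n\in D_1$, compactness of $\overline{D_1}$ yields a subsequence converging to some $x\in\overline{D_1}$, continuity gives $f(x)=y\in D_2$, and $f(\partial D_1)=\partial D_2$ together with $y\notin\partial D_2$ forces $x\in D_1$, so $y\in f(D_1)$. Connectedness of $D_2$ then gives $f(D_1)=D_2$, and consequently
\[
f(\overline{D_1})=f(D_1)\cup f(\partial D_1)=D_2\cup\partial D_2=\overline{D_2},
\]
which is the desired conclusion. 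The main obstacle is quite mild: it is just the honest use of invariance of domain in the second step to promote the hypothesis ``image lies in the closed disk'' to ``image on the interior lies in the open disk.'' The Jordan curve step and the clopen step are then essentially formal and transfer identically whether the ambient space is $\mathbb{C}$ or $S$.
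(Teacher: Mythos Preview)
Your proof is correct. The paper does not give a detailed proof of this lemma; it merely introduces it, together with the preceding lemma, by the sentence ``The following results are essentially consequences of argument principle,'' and offers nothing further. The intended route is therefore degree-theoretic: once $f(\partial D_1)=\partial D_2$ is established, the argument principle (or Brouwer degree) shows that every $w\in D_2$ has winding number $\pm 1$ with respect to $f|_{\partial D_1}$ and hence lies in the image. Your approach instead stays within point-set topology, replacing the degree computation by invariance of domain together with a clopen argument in the connected set $D_2$. Both routes rely on the preliminary observation $f(\partial D_1)=\partial D_2$; after that, yours is more self-contained (no orientation or differentiability issues to track), while the paper's one-line hint is in keeping with the analytic, covering-surface machinery used throughout the article.
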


\section{Removing branch points outside $f^{-1}(E_{q})$}

In this section, we will introduce the surgeries to remove branch points
outside $f^{-1}(E_{q})$. Before the key techniques, we remark some properties
of the partitions of covering surface $\Sigma=\left(  f,\overline{\Delta
}\right)  \in\mathcal{C}\left(  L,m\right)  .$

\begin{definition}
\label{round}Let $\Gamma=\left(  f,\partial\Delta\right)  $ be a closed curve
in $S$ which consists of a finite number of SCC arcs. We define $\mathfrak{L}%
\left(  \Gamma\right)  $ to be the minimal integer $m$ with the following
property: there exists closed arcs $\gamma_{j1},\gamma_{j2},j=1,\dots,m,$ such
that
\begin{align*}
\Gamma &  =\gamma_{02}=\gamma_{11}+\gamma_{12};\\
\gamma_{12}  &  =\gamma_{21}+\gamma_{22};\\
&  \ldots\\
\gamma_{m-1,2}  &  =\gamma_{m1},
\end{align*}
in which for each $j=1,2,\dots,m,$ $\gamma_{j1}$ is either a simple closed arc
of $\gamma_{j-1,2},$ or a folded path $I+\left(  -I\right)  $ where $I$ is a
maximal simple arc such that $I+\left(  -I\right)  $ is a folded arc of
$\gamma_{j-1,2}$. Note that the same closed curve $\gamma_{jk}$ may have
different initial point in different places.
\end{definition}

Note that $\mathfrak{L}\left(  \partial\Sigma\right)  <+\infty$ if $\Sigma
\in\mathcal{C}\left(  L,m\right)  $. The following examples give an intuitive
explanation of $\mathfrak{L}\left(  \Gamma\right)  $.

\begin{example}
(1) If $\Gamma$ is simple or $\Gamma=\overline{ab}+\overline{ba}$, then
$\mathfrak{L}\left(  \Gamma\right)  =1$. When $\Gamma$ is a point, we write
$\mathfrak{L}\left(  \Gamma\right)  =0.$

(2) For the closed curve $\Gamma$ in Figure \ref{f4.1} (1) we have
$\mathfrak{L}\left(  \Gamma\right)  =2.$

(3) The closed curve $\Gamma=ABCDEFGHIJKLMNOPQA$ in Figure \ref{f4.1} (2), in
which $CD,GH,KLM,LM,NO$ are five straight line segments on $S$ ($KLM$ is
straight), contains no simple closed arcs, but it contains four maximal folded
closed arcs $CDE$, $GHI,$ $LMN,$ and $NOP,$ and thus $\mathfrak{L}\left(
\Gamma\right)  =5.$
\end{example}

%

\begin{figure}
[ptb]
\begin{center}
\includegraphics[
height=2.0263in,
width=4.1632in
]%
{f1.eps}%
\caption{ }%
\label{f4.1}%
\end{center}
\end{figure}

The following lemma is trivial from Definition \ref{round}.

\begin{lemma}
\label{L-1}Let $\Gamma$ be a closed curve on $S$ which consists of a finite
number of simple circular arcs. If $\Gamma$ has a partition $\Gamma=\gamma
_{1}+\gamma_{2}$ such that $\gamma_{1}$ is a simple closed arc, or a maximal
folded closed arc, then
\[
\mathfrak{L}\left(  \gamma_{1}\right)  =\mathfrak{L}(\Gamma)-1.
\]

\end{lemma}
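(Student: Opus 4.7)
The plan is to establish the equality via two matching inequalities. I read the conclusion as $\mathfrak{L}(\gamma_{2})=\mathfrak{L}(\Gamma)-1$; the literal statement $\mathfrak{L}(\gamma_{1})=\mathfrak{L}(\Gamma)-1$ would force $\mathfrak{L}(\Gamma)=2$ in every case, since $\gamma_{1}$ is simple or folded and therefore has $\mathfrak{L}$-value $1$ by the examples following Definition \ref{round}, which disagrees with the five-step curve in Figure \ref{f4.1}(2) after removal of any one maximal folded sub-arc. I therefore plan the argument for $\mathfrak{L}(\gamma_{2})=\mathfrak{L}(\Gamma)-1$.

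For the easy inequality $\mathfrak{L}(\Gamma)\le\mathfrak{L}(\gamma_{2})+1$, I take any admissible peeling chain for $\gamma_{2}$ of length $k:=\mathfrak{L}(\gamma_{2})$ in the sense of Definition \ref{round}, and prepend a single peel in which the extracted piece is $\gamma_{11}:=\gamma_{1}$ with residue $\gamma_{12}:=\gamma_{2}$. The hypothesis that $\gamma_{1}$ is either a simple closed arc or a maximal folded closed arc of $\Gamma$ is exactly the admissibility condition for a first peel from $\Gamma=\gamma_{02}$. The concatenation is therefore a valid chain of length $k+1$, and the inequality follows.

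For the reverse inequality $\mathfrak{L}(\gamma_{2})\le\mathfrak{L}(\Gamma)-1$, I fix an optimal peeling chain for $\Gamma$ realizing $m=\mathfrak{L}(\Gamma)$ and argue by induction on $m$ that $\gamma_{1}$ can be moved to the head of the chain without lengthening it; once this is done, the residual tail of length $m-1$ supplies an admissible peeling of $\gamma_{2}$. If the first peel of the optimal chain already equals $\gamma_{1}$, there is nothing to do. Otherwise, I trace how the successive peels interact with the fixed sub-arc $\gamma_{1}$: because $\gamma_{1}$ is simple or maximally folded, each peel lying entirely inside $\gamma_{2}$ commutes past $\gamma_{1}$, and no admissible peel may properly overlap the maximal folded piece $\gamma_{1}$, so the first peel that meets $\gamma_{1}$ is forced to coincide with it, at which stage the commutation is complete.

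The decisive step is this commutation of peels in the reverse direction. What has to be excluded is that some peel $\gamma_{j1}$ straddles an endpoint of $\gamma_{1}$ inside $\Gamma$, since such a straddling peel cannot be transposed with $\gamma_{1}$ without altering the chain. The maximality clauses — both in the hypothesis on $\gamma_{1}$ and in the admissibility condition for folded peels inside Definition \ref{round} — are exactly the combinatorial ingredients needed to rule out interleavings of this kind. Combined with the routine upper bound above, this reordering argument yields $\mathfrak{L}(\gamma_{2})=\mathfrak{L}(\Gamma)-1$.
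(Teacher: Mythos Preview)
You are right that the printed conclusion $\mathfrak{L}(\gamma_{1})=\mathfrak{L}(\Gamma)-1$ must be a typo for $\mathfrak{L}(\gamma_{2})=\mathfrak{L}(\Gamma)-1$; with $\gamma_{1}$ the statement would force $\mathfrak{L}(\Gamma)=2$ always, contradicting Example~(3). The paper itself supplies no argument beyond the line ``the following lemma is trivial from Definition~\ref{round}'', so there is nothing to compare your proof against directly.

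Your inequality $\mathfrak{L}(\Gamma)\le\mathfrak{L}(\gamma_{2})+1$ is correct and is indeed immediate from the definition: prepending the admissible peel $\gamma_{1}$ to an optimal chain for $\gamma_{2}$ gives a chain for $\Gamma$.

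The reverse inequality $\mathfrak{L}(\gamma_{2})\le\mathfrak{L}(\Gamma)-1$ is the one the paper actually uses (Discussion~\ref{dis1}(b) invokes $\mathfrak{L}(\partial\Sigma_{1})\le\mathfrak{L}(\partial\Sigma)-1$), and here your commutation sketch has a real gap. You assert that ``no admissible peel may properly overlap the maximal folded piece $\gamma_{1}$'' and that ``the first peel that meets $\gamma_{1}$ is forced to coincide with it'', but neither follows from the stated hypotheses. When $\gamma_{1}$ is a simple closed loop based at $p$, nothing prevents $\gamma_{2}$ from passing through some interior point $q\in\gamma_{1}^{\circ}$; the sub-arc of $\Gamma$ running from one occurrence of $q$ to the other through $p$ may then be simple and closed, giving an admissible first peel $\gamma_{11}$ that straddles $\gamma_{1}$ without equalling it. There is a parallel issue in the ``disjoint'' case: even when $\gamma_{11}\subset\gamma_{2}$, you must verify that $\gamma_{1}$ remains a \emph{maximal} folded arc of the residue $\gamma_{12}$ (removing $\gamma_{11}$ may bring new material adjacent to $\gamma_{1}$ that extends the fold), and symmetrically that a maximal folded $\gamma_{11}$ of $\Gamma$ stays maximal inside $\gamma_{2}$. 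Your inductive scheme is the right shape, but these transfer-of-maximality and non-straddling claims need an explicit case analysis before the argument is complete.
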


Now we start to introduce some lemmas to deal with the non-special branch
points, i.e. the branch points over $E_{q}$ (Correspondingly, the special
branch points mean the branch points over $E_{q}$). It is essentialy similar
to previous results in \cite{S-Z}. We first establish a lemma to remove the
non-special branch points in the interior, that is, the branch points in
$C_{f}^{\ast}(\Delta)$ (Recall Remark \ref{notation} for the notations).

\begin{lemma}
\label{move-in-1}Let $\Sigma=\left(  f,\overline{\Delta}\right)
\in\mathcal{C}^{\ast}\left(  L,m\right)  $ and assume that (\ref{210615})
holds. If $C_{f}^{\ast}(\Delta)\neq\emptyset$, then there exists a surface
$\Sigma_{1}=\left(  f_{1},\overline{\Delta}\right)  \in\mathcal{C}^{\ast
}\left(  L,m\right)  $ such that%
\begin{equation}
H(\Sigma_{1})\geq H\left(  \Sigma\right)  ,L(\partial\Sigma_{1})\leq
L(\partial\Sigma), \label{as2-4}%
\end{equation}
and%
\begin{equation}
B_{f_{1}}^{\ast}\left(  \Delta\right)  \leq B_{f}^{\ast}\left(  \Delta\right)
-1. \label{as2-5}%
\end{equation}
Moreover, $L(\partial\Sigma_{1})=L(\partial\Sigma)$ if and only if
$\partial\Sigma_{1}=\partial\Sigma,H(\Sigma_{1})\geq H\left(  \Sigma\right)  $
and $B_{f_{1}}^{\ast}\left(  \partial\Delta\right)  >B_{f}^{\ast}\left(
\partial\Delta\right)  .$
\end{lemma}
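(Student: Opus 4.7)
The strategy is to slide the interior non-special branch point $b \in C_f^*(\Delta)$ onto $\partial\Delta$ via iterated application of Lemma \ref{move}, so that $b$ becomes a non-special boundary branch point. This preserves $\partial\Sigma$ (and hence the SCC-partition and $L$) while strictly reducing $B_f^*(\Delta)$ and strictly increasing $B_f^*(\partial\Delta)$, exactly matching the ``moreover'' clause.

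First I would fix $b \in C_f^*(\Delta)$ with $q_0 = f(b) \notin E_q$ and $d = v_f(b) \geq 2$. By Lemma \ref{cov-1}(A), $f$ is a $d$-to-$1$ BCCM from a small neighborhood $V$ of $b$ onto $\overline{D(q_0,\delta_0)}$. I would choose a simple polygonal path $\gamma$ on $S$ from $q_0$ to a target $p^* \in \partial\Sigma \setminus E_q$ whose interior avoids $E_q \cup CV_f^*$; such a target and such a $\gamma$ exist because $\partial\Sigma$ consists of a finite number of SCC arcs and the excluded set is finite. Lift $\gamma$ by $f$ starting at $b$, and use Lemma \ref{Ri} to extend the lift $\alpha$ maximally in $\overline{\Delta}$. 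Since $\gamma$ terminates on $f(\partial\Delta) = \partial\Sigma$ and $\gamma^\circ$ avoids $E_q$, the lift $\alpha$ first meets $\partial\Delta$ at some point $p$ with $f(p) \notin E_q$.

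Iterating Lemma \ref{move} along $\alpha$, slide $b$ continuously to $p$ while fixing $f$ outside a tubular neighborhood of $\alpha$. The resulting surface $\Sigma_1 = (f_1, \overline{\Delta})$ agrees with $\Sigma$ on $\partial\Delta$ and away from that tube, so $\partial\Sigma_1 = \partial\Sigma$, the $\mathcal{C}^*(L,m)$-partitions (\ref{part}) and (\ref{part1}) are preserved verbatim, and $A(\Sigma_1) = A(\Sigma)$, $\overline{n}(\Sigma_1, E_q) = \overline{n}(\Sigma, E_q)$. Consequently $L(\partial\Sigma_1) = L(\partial\Sigma)$ and $H(\Sigma_1) = H(\Sigma)$. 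Meanwhile, $b$ has been replaced by a branch point at $p \in \partial\Delta$ with $v_{f_1}(p) = v_f(b)$ and $f_1(p) \notin E_q$, so $B_{f_1}^*(\Delta) = B_f^*(\Delta) - b_f(b)$ and $B_{f_1}^*(\partial\Delta) = B_f^*(\partial\Delta) + b_f(b)$, giving both (\ref{as2-5}) and the ``moreover'' clause.

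The main obstacle is handling the two exceptional events that can interrupt the slide: (i) the lift $\alpha$ passes through another branch point $b'$ of $f$, and (ii) two different $f$-lifts of $\gamma$ from $b$ cross each other in $\Delta$ or reach $\partial\Delta$ simultaneously before the slide completes. For (i), the chart in Lemma \ref{move} can be enlarged to a BCCM chart containing both branch points, inside which $b$ bypasses $b'$ harmlessly. For (ii), the pair of lifts fits the hypothesis of Lemma \ref{glue}(i) or \ref{glue}(ii), or of Lemma \ref{patch}: cutting $\overline{\Delta}$ along the two lifts and sewing along their common $f$-image produces $\Sigma_1$ with $L(\partial\Sigma_1) < L(\partial\Sigma)$, $A(\Sigma_1) = A(\Sigma)$, and $b$ removed from $C_f^*(\Delta)$. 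The conservation identities in Lemma \ref{patch}(ii) together with hypothesis (\ref{210615}) and Remark \ref{RL} then yield $H(\Sigma_1) \geq H(\Sigma)$, while the resulting boundary uses at most $m$ SCC arcs and can be refined back to an $m$-partition without creating a special branch on any $\alpha_j^\circ$.
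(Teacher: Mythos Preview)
Your plan has the right destination --- push the interior non-special branch point to $\partial\Delta$ --- but the vehicle you chose cannot get there. Lemma \ref{move} requires $b\in\Delta$ and produces $b_1\in V$ with $\overline{V}\subset\Delta$; every iteration leaves the branch point strictly inside $\Delta$, and as $b_t$ approaches $\partial\Delta$ the admissible BCCM chart $V$ shrinks to nothing. No finite composition of Lemma \ref{move} lands on $\partial\Delta$, and you supply no limit argument. The paper does not slide: in its Case (3) it takes \emph{all} $v$ lifts $\beta_1,\ldots,\beta_v$ of a maximal subarc at once, cuts $\Delta$ along the tree $\bigcup_l\beta_l$, and re-glues with a cyclic shift ($\beta_l''\leftrightarrow\beta_{l+1}'$ instead of $\beta_l''\leftrightarrow\beta_l'$). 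That single global surgery is what actually transplants the order-$v$ branching from $p_0\in\Delta$ to the boundary point $p_1$, and it is the step your argument is missing.

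Two further points. First, the paper aims its path $\eta$ at a point of $E_q$, not at $\partial\Sigma$; this is what makes hypothesis (\ref{210615}) do real work. When two lifts share a terminal point, or when all $v$ lifts stay interior (the paper's Cases (1), (2), (4)), the enclosed piece sews up to a closed surface and Riemann--Hurwitz forces $R(\Sigma_1)\geq R(\Sigma)+4\pi$, contradicting (\ref{210615}); those cases are thereby \emph{excluded}, not handled. Your path to $\partial\Sigma\setminus E_q$ would sidestep Case (4), but you still need the cut-and-resew mechanism for Case (3), and you never invoke (\ref{210615}) in the main line. Second, your treatment of obstacle (ii) is not right: when two lifts land on $\partial\Delta$ (the paper's Case (5)), the cut divides $\Delta$ into two Jordan domains, yielding two surfaces with $A(\Sigma_1')+A(\Sigma_2')=A(\Sigma)$, $R(\Sigma_1')+R(\Sigma_2')=R(\Sigma)$ and $L(\partial\Sigma_1')+L(\partial\Sigma_2')=L(\partial\Sigma)$; one keeps whichever has the larger $H$, so $A(\Sigma_1)=A(\Sigma)$ is false in general, and one must then verify separately that the retained piece still carries a $\mathcal{C}^\ast(L,m)$-partition with at most $m$ arcs.
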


\begin{proof}
Corresponding to Definition \ref{sp}, we assume $\partial\Delta$ and
$\partial\Sigma$ have $\mathcal{C}^{\ast}(L,m)$-partitions
\begin{equation}
\partial\Delta=\alpha_{1}\left(  a_{1},a_{2}\right)  +\alpha_{2}(a_{2}%
,a_{3})+\dots+a_{m}\left(  a_{m},a_{1}\right)  \label{a1}%
\end{equation}
and
\begin{equation}
\partial\Sigma=c_{1}\left(  q_{1},q_{2}\right)  +c_{2}(q_{2},q_{3}%
)+\dots+c_{m}\left(  q_{m},q_{1}\right)  , \label{a2}%
\end{equation}
where $q_{j}=f\left(  a_{j}\right)  $ and $c_{j}\left(  q_{j},q_{j+1}\right)
=\left(  f,\alpha_{j}\left(  a_{j},a_{j+1}\right)  \right)  ,j=1,\dots,m$. By
definition of $\mathcal{C}^{\ast}(L,m)$, $f$ has no branch points in
$\alpha_{j}^{\circ}\cap f^{-1}(E_{q})$ for each $j=1,2,\dots,m.$

Let $p_{0}\in{C}_{f}^{\ast}\left(  \Delta\right)  $, say, $p_{0}$ is a
non-special branch point of $f$ with order $v$ and let $b_{0}=f(p_{0})$. Let
$b$ be a point in $E_{q}$ such that $d\left(  b_{0},b\right)  <\pi.$ Then
there is a polygonal simple path $\eta=\eta\left(  b_{0},b\right)  $ on $S$
from $b_{0}$ to $b$ such that

\begin{condition}
\label{conas1-1}$\eta^{\circ}\cap E_{q}=\emptyset$, $\eta^{\circ}\cap
\{q_{j}\}_{j=1}^{m}=\emptyset$, and $\eta^{\circ}$ contains no branch value of
$f.$ Moreover, $\eta^{\circ}$ intersects $\partial\Sigma$ perpendicularly and
$\beta\cap\partial\Sigma$ contains only finitely many points.
\end{condition}

We can extract a maximal subarc $\eta_{1}=\eta\left(  b_{0},b_{1}\right)
\ $of $\eta$ with $b_{1}\in\eta\backslash\{b_{0}\}$ such that $\eta_{1}$ has
$v$ distinct $f$-lifts $\beta_{l}=\beta_{l}\left(  p_{0},p_{l}\right)
,l=1,2,\dots,v,$ starting from $p_{0}$ with,%
\[
\beta_{l}^{\circ}\subset\Delta,\quad l=1,\dots,v,
\]
and that%
\[
\beta_{l_{1}}^{\circ}\cap\beta_{l_{2}}^{\circ}=\emptyset,\quad1\leq
l_{1}<l_{2}\leq v.
\]
The maximum of $\eta_{1}$ means that either $b_{1}=b\in E_{q},$ or some of
$\{p_{l}\}_{l=1}^{v}$ are contained in $\partial\Delta.$ We write
$A=\cup_{l=1}^{v}\beta_{l},$ and assume that $\beta_{l}$ are arranged
anticlockwise around the common initial point $p_{0}$. Thus, by Condition
\ref{conas1-1}, the following claim holds.

\begin{claim}
\label{conas3}\label{Cl1} (i) $\left\{  p_{l}\right\}  _{l=1}^{v}\subset
\Delta$ only if $b_{1}=b;$

(ii) $\left\{  p_{l}\right\}  _{l=1}^{v}\subset f^{-1}(E_{q})$ if and only if
$b_{1}=b\in E_{q};$

(iii) $p_{l_{1}}=p_{l_{2}}$ for some $l_{1}\neq l_{2}$ if and only if
$p_{l_{1}}$ is also a branch point and $b_{1}=b$.
\end{claim}

Then we have only five possibilities:

\noindent\textbf{Case (1).} $p_{l_{1}}=p_{l_{2}}$ for some $l_{1}\neq l_{2}$
and $p_{l_{1}}\in\partial\Delta.$

\noindent\textbf{Case (2).} $p_{l_{1}}=p_{l_{2}}$ for some $l_{1}\neq l_{2}$
and $p_{l_{1}}\in\Delta.$

\noindent\textbf{Case (3).} $p_{l},l=1,\dots,v,$ are distinct from each other
and $\{p_{l}\}_{l=2}^{v}\subset\Delta\ $but $p_{1}\in\partial\Delta.$

\noindent\textbf{Case (4).} $p_{l},l=1,\dots,v,$ are distinct from each other
and $\{p_{l}\}_{l=1}^{v}\subset\Delta.$

\noindent\textbf{Case (5).} $p_{l},l=1,\dots,v,$ are distinct from each other
and there exist some distinct $l_{1}\ $and $l_{2}$ such that both $p_{l_{1}}$
and $p_{l_{2}}$ are contained in $\partial\Delta.$

Now we will discuss the above cases one by one.

\noindent\textbf{Cases (1) and (2) cannot occur.}

Assume Case (1) occurs. By Claim \ref{conas3} (iii), $p_{l_{1}}$($=p_{l_{2}}$)
is a branch point in $f^{-1}(E_{q})$ and $b_{1}=b$. Since $\{\beta_{j}%
\}_{j=1}^{l}$ are arranged anticlockwise, we can derive that $p_{l_{1}%
}=p_{l_{2}}=p_{l_{1}+1}$, which means that there exist two adjacent $f$-lifts
$\beta_{l_{1}}$ and $\beta_{l_{1}+1}$ whose terminal points coincide. The
$f$-lift $\beta_{l_{1}}-\beta_{l_{1}+1}$ encloses a domain $D\subset\Delta$.
Thus we can cut $D$ off $\Delta$ along its boundary and sew the remained part
to obtain a new surface $\Sigma_{1}=\left(  f_{1},\overline{\Delta}\right)  $
such that $f_{1}=f$ in a neighborhood of $\partial\Delta\backslash\{p_{l_{1}%
}\}$ in $\overline{\Delta}.$ Then $\partial\Sigma_{1}=\partial\Sigma.$ We also
have $p_{l_{1}}\in\{a_{j}\}_{j=1}^{m}$ since $p_{l_{1}}$ is a branch point in
$f^{-1}(E_{q})$ and $f$ has no branch points in $\alpha_{j}^{\circ}\cap
f^{-1}(E_{q})$ for $j=1,2,\dots,m$. Thus (\ref{a1}) and (\ref{a2}) are
$\mathcal{C}^{\ast}\left(  L,m\right)  $ partitions of $\partial\Sigma_{1}$
which implies that $\Sigma_{1}\in\mathcal{C}^{\ast}\left(  L,m\right)  .$ By
Lemma \ref{cut-3} (i) we have:

\begin{claim}
\label{closed} $(f,\overline{D})$ can be sewn along its boundary
$(f,\beta_{l_{1}})\sim(f,\beta_{l_{2}})=\eta$, resulting a closed surface
$\Sigma_{0}=(f_{0},S)$.
\end{claim}

Assume the degree of $\Sigma_{0}$ is $d$, then by Riemann-Hurwitz formula we
have
\begin{align*}
\overline{n}\left(  \Sigma_{0},E_{q}\right)   &  =qd-\sum\limits_{x\in
f_{0}^{-1}(E_{q})}(v_{f_{0}}(x)-1)\\
&  \geq qd-\sum\limits_{x\in S}(v_{f_{0}}(x)-1)\\
&  \geq(q-2)d+2.
\end{align*}
On the other hand, $(\partial D)\cap f^{-1}(E_{q})=\{p_{l_{1}}\}$. Thus we
have $\overline{n}\left(  \Sigma_{1}\right)  =\overline{n}\left(
\Sigma\right)  -\overline{n}\left(  \Sigma_{0}\right)  +1\leq\overline
{n}\left(  \Sigma\right)  -(q-2)d-1$. It is clear that $A\left(  \Sigma
_{1}\right)  =A(\Sigma)-4d\pi$. Then we have
\begin{align*}
R(\Sigma_{1})  &  =\left(  q-2\right)  A\left(  \Sigma_{1}\right)
-4\pi\overline{n}\left(  \Sigma_{1},E_{q}\right) \\
&  \geq\left(  q-2\right)  A\left(  \Sigma\right)  -4\pi\left(  q-2\right)
d-4\pi\overline{n}\left(  \Sigma,E_{q}\right)  +4\pi(q-2)d+4\pi\\
&  =R(\Sigma)+4\pi,
\end{align*}
and thus $H(\Sigma_{1})=H(\Sigma)+\frac{4\pi}{L(\partial\Sigma_{1})},$ which
with $\partial\Sigma_{1}=\partial\Sigma$ and (\ref{210615}) implies a
contradiction:
\[
H_{L}\geq H(\Sigma_{1})\geq H_{L}-\frac{4\pi}{2L(\partial\Sigma)}+\frac{4\pi
}{L(\partial\Sigma_{1})}=H_{L}+\frac{7\pi}{2L(\partial\Sigma_{1})}.
\]
Thus Case (1) cannot occur.

Following the same arguments, one can show that Case (2) also cannot occur.

\noindent\textbf{Discussion of Case (5).}

Assume Case (5) occurs. Then the $f$-lift $-\beta_{l_{1}}+\beta_{l_{2}}$
divides $\Delta$ into two Jordan domains $\Delta_{1}$ and $\Delta_{2}$ with%
\[
\partial\Delta_{1}=-\beta_{l_{2}}+\beta_{l_{1}}+\tau_{1},\quad\partial
\Delta_{2}=-\beta_{l_{1}}+\beta_{l_{2}}+\tau_{2},
\]
where $\tau_{1}$ is the arc of $\partial\Delta$ from $p_{l_{1}}$ to $p_{l_{2}%
}$, and $\tau_{2}=\left(  \partial\Delta\right)  \backslash\tau_{1}^{\circ}.$
Then by Lemma \ref{cut-3}, we can sew $\left(  f,\overline{\Delta_{1}}\right)
$ and $\left(  f,\overline{\Delta_{2}}\right)  $ along $-\beta_{l_{1}}%
+\beta_{l_{2}}$ respectively to obtain two new surfaces $\Sigma_{1}=\left(
f_{1},\overline{\Delta}\right)  $ and $\Sigma_{2}=\left(  f_{2},\overline
{\Delta}\right)  $ such that
\begin{equation}
\label{s12}\partial\Sigma=\partial\Sigma_{1}+\partial\Sigma_{2}, \quad
R\left(  \Sigma_{1}\right)  +R\left(  \Sigma_{2}\right)  =R(\Sigma),
\end{equation}
and that $\Sigma_{1}$ and $\Sigma_{2}$ satisfy the following condition.

\begin{condition}
\label{cond1} $\tau_{1}^{\circ}$ (resp. $\tau_{2}^{\circ}$) has a neighborhood
$N_{1}$ (resp. $N_{2}$) in $\overline{\Delta_{1}}$ (resp. $\overline
{\Delta_{2}}$). And $\left(  \partial\Delta\right)  \backslash\{1\}$ has a
neighborhood $N_{1}^{\prime}$ (resp. $N_{2}^{\prime}$) in $\overline{\Delta},$
such that $\left(  f_{1},N_{1}^{\prime}\right)  $ (resp. $\left(  f_{2}%
,N_{2}^{\prime}\right)  $) is equivalent to $\left(  f_{1},N_{1}\right)  $
(resp. $\left(  f_{2},N_{2}\right)  $).
\end{condition}

Since each arc in partition (\ref{a1}) is SCC and $\left(  f,\tau_{1}\right)
$ (resp.$\left(  f,\tau_{2}\right)  $) is closed, we may assume $p_{l_{1}}%
\in\alpha_{i_{1}}\left(  a_{i_{1}},a_{i_{1}+1}\right)  \backslash\{a_{i_{1}%
+1}\}$ and $p_{l_{2}}\in\alpha_{i_{1}+k}\left(  a_{i_{1}+k},a_{i_{1}%
+k+1}\right)  \backslash\{a_{i_{1}+k+1}\}$ for some $0\leq k\leq m.$

We should show that $0<k<m$. Otherwise, $p_{l_{1}}$ and $p_{l_{2}}$ are both
contained in $\alpha_{i_{1}}\backslash\{a_{i_{1}+1}\}$ when $k=0$ or $m$. But
$f$ is injective on $\alpha_{j}\backslash\{a_{j+1}\}$ for each $j$, and thus
$p_{l_{1}}=p_{l_{2}},$ contradicting to the assumption. Then
\begin{equation}
\tau_{1}=\alpha_{i_{1}}\left(  p_{l_{1}},a_{i_{1}+1}\right)  +\alpha_{i_{1}%
+1}\left(  a_{i_{1}+1},a_{i_{1}+2}\right)  +\dots+\alpha_{i_{1}+k}\left(
a_{i_{1}+k},p_{l_{2}}\right)  , \label{a3}%
\end{equation}
and
\begin{equation}
\tau_{2}=\alpha_{i_{1}+k}\left(  p_{l_{2}},a_{i_{1}+k+1}\right)
+\alpha_{i_{1}+k+1}\left(  a_{i_{1}+k+1},a_{i_{1}+k+2}\right)  +\dots
+\alpha_{i_{1}+m}\left(  a_{i_{1}+m},p_{l_{1}}\right)  , \label{a4}%
\end{equation}
where $a_{i_{1}+j}=a_{i_{1}+j-m}$ and $\alpha_{i_{1}+j}=\alpha_{i_{1}+j-m}$ if
$i_{1}+j>m$, and either of the two partitions (\ref{a3}) and (\ref{a4})
contains at most $m$ terms.

We first show that $\Sigma_{1}\in\mathcal{C}^{\ast}\left(  L,m\right)  .$ We
may assume $\partial\Delta$ has a partition
\begin{align}
\partial\Delta &  =\alpha_{1}^{\prime}+\alpha_{2}^{\prime}+\dots+\alpha
_{k+1}^{\prime}\label{a5}\\
&  =\alpha_{1}^{\prime}\left(  a_{1}^{\prime},a_{2}^{\prime}\right)
+\alpha_{2}^{\prime}\left(  a_{2}^{\prime},a_{3}^{\prime}\right)
+\dots+\alpha_{k+1}^{\prime}\left(  a_{k+1}^{\prime},a_{1}^{\prime}\right)
,\nonumber
\end{align}
such that
\begin{align*}
\left(  f,\alpha_{i_{1}}\left(  p_{l_{1}},a_{i_{1}+1}\right)  \right)   &
=\left(  f_{1},\alpha_{1}^{\prime}\right)  ,\\
(f,\alpha_{i_{1}+1}\left(  a_{i_{1}+1},a_{i_{1}+2}\right)  )  &  =\left(
f_{1},\alpha_{2}^{\prime}\right)  ,\\
&  \ldots\\
\left(  f,\alpha_{i_{1}+k}\left(  a_{i_{1}+k},p_{l_{2}}\right)  \right)   &
=\left(  f_{1},\alpha_{k+1}^{\prime}\left(  a_{k+1}^{\prime},a_{1}^{\prime
}\right)  \right)  .
\end{align*}
Note that $\partial\Delta=\alpha_{1}^{\prime}$ if and only if $p_{l_{1}%
}=a_{i_{1}}$, $p_{l_{2}}=a_{i_{1}+1}$, $c_{i_{1}}=\left(  f,\alpha_{i_{1}%
}\right)  $ is a whole circle, and $(f_{1},\partial\Delta)=\left(  f,\tau
_{1}\right)  $. In this way, $L(\partial\Sigma_{1})<L(\partial\Sigma)$. It
follows from (\ref{a1}), (\ref{a3}) and Condition \ref{cond1} that, the
partition (\ref{a5}) is a $\mathcal{C}^{\ast}\left(  L,m\right)  $-partition.
Similarly, $\Sigma_{2}$ also has a $\mathcal{C}^{\ast}\left(  L,m\right)  $-partition.

It is clear that
\[
\max\{B_{f_{1}}^{\ast}(\Delta),B_{f_{2}}^{\ast}(\Delta)\}\leq B_{f_{1}}^{\ast
}(\Delta)+B_{f_{2}}^{\ast}(\Delta)=B_{f}^{\ast}\left(  \Delta\right)  -1.
\]
Recalling the condition (\ref{s12}), we deduce that $\max\{H(\Sigma
_{1}),H(\Sigma_{2})\}\geq H(\Sigma).$ We may assume $H(\Sigma_{1})\geq
H(\Sigma_{2}),$ otherwise we replace $\Sigma_{1}$ with $\Sigma_{2}.$ Then
$\Sigma_{1}$ is the desired surface in Case (5) and in this case,
$L(\partial\Sigma_{1})<L(\partial\Sigma).$

\noindent\textbf{Discussion of Cases (3). }Let $A=\cup_{l=1}^{v}\beta_{l}$ and
$\Delta_{1}=\Delta\backslash A.$ Then we obtain a surface $F$ whose interior
is $\left(  f,\Delta_{1}\right)  $ and whose boundary is
\begin{align*}
\gamma_{1}  &  =\left(  \partial\Delta\right)  -\beta_{1}\left(  p_{0}%
,p_{1}\right)  +\beta_{2}\left(  p_{0},p_{2}\right)  -\beta_{2}\left(
p_{0},p_{2}\right) \\
&  +\dots+\beta_{v}\left(  p_{0},p_{v}\right)  -\beta_{v}\left(  p_{0}%
,p_{v}\right)  +\beta_{1}\left(  p_{0},p_{1}\right)  ,
\end{align*}
where $\partial\Delta$ is regarded as a closed path from $p_{1}$ to $p_{1}.$
See (1) of Figure \ref{fig4.2} for the case $v=3$. Now we split $A$ into a
simple path
\begin{align*}
\gamma &  =-\beta_{1}^{\prime\prime}\left(  p_{0}^{2},p_{1}\right)  +\beta
_{2}^{\prime}\left(  p_{0}^{2},p_{2}\right)  -\beta_{2}^{\prime\prime}\left(
p_{0}^{3},p_{2}\right) \\
&  +\dots+\beta_{v}^{\prime}\left(  p_{0}^{v},p_{v}\right)  -\beta_{v}%
^{\prime\prime}\left(  p_{0}^{1},p_{v}\right)  +\beta_{1}^{\prime}\left(
p_{0}^{1},p_{1}\right)  ,
\end{align*}
as in Figure \ref{fig4.2} (2). Via a homeomorphism from $\Delta_{1}^{\prime}$
onto $\Delta_{1}$, we obtain the surface $F=\left(  g,\overline{\Delta
_{1}^{\prime}}\right)  $ whose interior is equivalent to $\left(  f,\Delta
_{1}\right)  $ and whose boundary $\partial F=\left(  g,\partial\Delta
_{1}^{\prime}\right)  $ is equivalent to $\left(  f,\gamma_{1}\right)  .$ Then
it is easy to see that $\Sigma$ can be recovered by sewing $F$ along
$\beta_{l}^{\prime}$ and $\beta_{l}^{\prime\prime},$ which means by
identifying $\beta_{l}^{\prime}$ and $\beta_{l}^{\prime\prime},$
$l=1,2,\dots,v.$

It is interesting that, by Lemma \ref{cut-3} (ii), we can sew $F$ by
identifying $\beta_{l}^{\prime\prime}$ with $\beta_{l+1}^{\prime},$ for
$l=1,2,\dots,v-1,$ and $\beta_{v}^{\prime\prime}$ with $\beta_{1}^{\prime}$,
to obtain a new surface $\Sigma_{1}=\left(  f_{1},\overline{\Delta}\right)  .$
Indeed, we can deform $\overline{\Delta_{1}^{\prime}}$ as in Figure
\ref{fig4.2} (2) into $\overline{\Delta_{1}^{\prime\prime}}$ as in Figure
\ref{fig4.2} (3) with $p_{1}$ fixed, and then deform $\Delta_{1}^{\prime
\prime}$ homeomorphically onto the disk $\Delta$ omitting the union $B$ of the
$v$ line segments $\overline{p_{0}^{l}p_{1}}$ for $l=1,2,\dots,v,$ as in
Figure \ref{fig4.2} (4).%

\begin{figure}
[ptb]
\begin{center}
\includegraphics[
height=4.2263in,
width=4.1727in
]%
{f2.eps}%
\caption{ }%
\label{fig4.2}%
\end{center}
\end{figure}

It is clear that $A\left(  \Sigma\right)  =A\left(  \Sigma_{1}\right)  $ and
$L(\partial\Sigma)=L(\partial\Sigma_{1}).$ When $b_{1}=b,$ we see by $b\in
E_{q}$ that $\{p_{j}\}_{j=1}^{v}\subset f^{-1}(E_{q})$ and when $b_{1}\neq b$
we have $A\cap E_{q}=\emptyset.$ Thus
\[
\overline{n}\left(  F,E_{q}\right)  =\overline{n}\left(  f_{1},E_{q}\right)
=\#\{f^{-1}(E_{q})\cap\left(  \Delta\backslash A\right)  \}=\overline
{n}\left(  \Sigma\right)  -\left(  v-1\right)  \chi_{E_{q}}\left(
b_{1}\right)  ,
\]
where $\chi_{E_{q}}\left(  b_{1}\right)  =1$ when $b_{1}\in E_{q}$ and
$\chi_{E_{q}}\left(  b_{1}\right)  =0$ when $b_{1}\notin E_{q}.$ Clearly, we
have $\partial\Sigma\sim\partial\Sigma_{1}.$ Thus $\Sigma_{1}\in
\mathcal{C}\left(  L,m\right)  $ and
\begin{equation}
H\left(  \Sigma_{1}\right)  =H\left(  \Sigma\right)  +\frac{4\pi\left(
v-1\right)  \chi_{E_{q}}\left(  b_{1}\right)  }{L(\partial\Sigma)}.
\label{H>H}%
\end{equation}
If $b_{1}=b,$ then by (\ref{H>H}) and (\ref{210615}), we obtain a
contradiction that
\[
H_{L}\geq H(\Sigma_{1})>H_{L}-\frac{\pi}{2L(\partial\Sigma)}+\frac{4\pi\left(
v-1\right)  }{L(\partial\Sigma)}>H_{L}.
\]
Thus we have
\begin{equation}
b_{1}\neq b\ \mathrm{and\ }A\cap f^{-1}(E_{q})=\emptyset, \label{bb=0}%
\end{equation}
which induces that $\Sigma_{1}\in\mathcal{C}^{\ast}\left(  L,m\right)  ,$ and
that $H\left(  \Sigma_{1}\right)  =H\left(  \Sigma\right)  .$

After above deformations, all $p_{0}^{l}$, $l=1,2,\dots,v,$ are regular points
of $f_{1}.$ Thus
\[
\sum_{l=1}^{v}\left(  v_{f_{1}}\left(  p_{0}^{l}\right)  -1\right)
=v_{f}\left(  p_{0}\right)  -v=0.
\]
On the other hand, $p_{0}$ and $\left\{  p_{l}\right\}  _{l=2}^{v}$ are the
only possible branch points of $f$ on $A\cap\Delta$, and the cut $B$ inside
$\Delta$ contains no branch point of $f_{1}$. Thus we have
\[
B_{f}\left(  \{p_{0},p_{2},\dots,p_{v}\}\right)  \geq B_{f}\left(
p_{0}\right)  =v_{f}\left(  p_{0}\right)  -1=v-1,
\]
and
\begin{align*}
B_{f}^{\ast}\left(  \Delta\right)   &  =B_{f}^{\ast}\left(  \left(
\Delta\backslash A\right)  \right)  +B_{f}^{\ast}\left(  \{p_{0},p_{2}%
,\dots,p_{v}\}\right) \\
&  \geq B_{f}^{\ast}\left(  \left(  \Delta\backslash A\right)  \right)  +v-1\\
&  =B_{f_{1}}^{\ast}\left(  \Delta\backslash\cup_{l=1}^{v}\overline{p_{1}%
p_{0}^{l}}\right)  +v-1\\
&  =B_{f_{1}}^{\ast}\left(  \Delta\right)  +v-1\\
&  \geq B_{f_{1}}^{\ast}\left(  \Delta\right)  +1.
\end{align*}

It is clear that
\[
v_{f_{1}}\left(  p_{1}\right)  =v_{f}\left(  p_{1}\right)  +v_{f}\left(
p_{2}\right)  +\dots+v_{f}\left(  p_{v}\right)  \geq v_{f}\left(
p_{1}\right)  +v-1>v_{f}\left(  p_{1}\right)  +1.
\]
and thus we have by (\ref{bb=0}) $B_{f_{1}}^{\ast}\left(  p_{1}\right)
>B_{f}^{\ast}\left(  p_{1}\right)  .$ On the other hand, we have $b_{f}\left(
z\right)  \equiv b_{f_{1}}\left(  z\right)  $ for all $z\in\left(
\partial\Delta\right)  \backslash\{p_{1}\}.$ Thus we have%
\[
B_{f_{1}}^{\ast}\left(  \partial\Delta\right)  >B_{f}^{\ast}\left(
\partial\Delta\right)  .
\]
This completes the proof of Case (3).%

\begin{figure}
[ptb]
\begin{center}
\includegraphics[
height=4.5965in,
width=4.1935in
]%
{f3.eps}%
\caption{ }%
\label{fig4.3}%
\end{center}
\end{figure}

\noindent\textbf{Case (4) cannot occur. }In this case, $b_{1}=b$, $\left\{
p_{l}\right\}  _{l=1}^{v}\subset f^{-1}(E_{q})\ $and $A\subset\Delta.$ The
discussion is similar to of Case (3) with $b_{1}=b$, and we can deduce a
contradiction. Then, as in Figure \ref{fig4.3}, we can cut and split $\Delta$
along $A$ to obtain an annulus $\Delta_{1}=\Delta\backslash\overline{D}$ with
$\partial\Delta_{1}=\partial\Delta-\partial D$, where $\partial D=\beta
_{1}^{\prime}-\beta_{1}^{\prime\prime}+\beta_{2}^{\prime}-\beta_{2}%
^{\prime\prime}+\dots+\beta_{v}^{\prime}-\beta_{v}^{\prime\prime}.$ Repeating
the same strategies in Case (3), we can obtain a new surface $\Sigma
_{1}=\left(  f_{1},\overline{\Delta}\right)  $ so that $f_{1}$ and $f$
coincide on a neighborhood of $\partial\Delta$ in $\overline{\Delta},$ which
implies that $\Sigma_{1}\in\mathcal{C}^{\ast}\left(  L,m\right)  .$ In Figure
\ref{fig4.3} (4), $B=\overline{p_{1}p_{0}^{1}}\cup\overline{p_{1}p_{0}^{2}%
}\cup\overline{p_{1}p_{0}^{3}}\cup\dots\cup\overline{p_{1}p_{0}^{v}}$ contains
only one point $p_{1}$ of $f_{1}^{-1}(E_{q}),$ and thus
\begin{align*}
\#[f^{-1}(E_{q})\cap\Delta]  &  =\#\left[  f^{-1}(E_{q})\cap\Delta
\backslash\{p_{l}\}_{l=1}^{v}\right]  +v\\
&  =\#\left[  f_{1}^{-1}(E_{q})\cap\Delta\backslash B\right]  +\#[f_{1}%
^{-1}(E_{q})\cap B]+v-1\\
&  =\#[f_{1}^{-1}(E_{q})\cap\Delta]+v-1,
\end{align*}
which implies
\[
\overline{n}\left(  \Sigma_{1}\right)  =\overline{n}\left(  \Sigma\right)
-v+1\leq\overline{n}\left(  \Sigma\right)  -1.
\]
From above arguments, we derive $H(\Sigma_{1})\geq H\left(  \Sigma\right)
+\frac{4\pi}{L(\partial\Sigma)}.$ This again implies a contradiction.

Now our proof has been completed.
\end{proof}

\begin{remark}
For a branch point $a$ of $f,$ we call $\left(  a,f(a)\right)  $ a branch pair
of $f.$ In Case (3) of previous proof, $f_{1}$ can be understood as a movement
of the branch pair $\left(  p_{0},f(p_{0})\right)  $ of $f$ to the branch pair
$\left(  p_{1},f_{1}\left(  p_{1}\right)  \right)  $ of $f_{1}$ along the
curve $\beta_{1}\left(  p_{0},p_{1}\right)  $. Then $\left(  p_{0}%
,f(p_{0})\right)  $ is split into $v$ regular pairs $\left(  p_{0}^{l}%
,f_{1}\left(  p_{0}^{l}\right)  \right)  =\left(  p_{0}^{l},f\left(
p_{0}\right)  \right)  $, $l=1,\ldots,v$, and $\left(  p_{1},f(p_{1}\right)
)$ becomes a branch pair of $f_{1}$ at the boundary point $p_{1},$ whose order
$v_{f_{1}}\left(  p_{1}\right)  =\sum\limits_{l=1}^{v}v_{f}\left(
p_{l}\right)  $. Meanwhile, all other branch pairs $\left(  x,f(x)\right)  $
remain unchanged, saying that there exists a homeomorphism $h$ from
$\overline{\Delta}\backslash A$ onto $\overline{\Delta}\backslash B$ such that
$\left(  f,\overline{\Delta}\backslash A\right)  $ is equivalent to $\left(
f_{1}\circ h,{\Delta}\backslash A\right)  .$
\end{remark}

\begin{corollary}
\label{move-out} Let $\Sigma=\left(  f,\overline{\Delta}\right)
\in\mathcal{C}^{\ast}\left(  L,m\right)  $, and assume that (\ref{210615})
holds. Then there exists a surface $\Sigma_{1}=\left(  f_{1},\Delta\right)
\in\mathcal{C}^{\ast}\left(  L,m\right)  $ satisfying (\ref{210615}) such that
$C_{f_{1}}^{\ast}\left(  \Delta\right)  =\emptyset$,
\[
H(\Sigma_{1})\geq H\left(  \Sigma\right)  ,L(\partial\Sigma_{1})\leq L\left(
\partial\Sigma\right)  ,
\]
and (i) or (ii) holds:

(i) $C_{f}^{\ast}\left(  \Delta\right)  \neq\emptyset\ $and $L(\partial
\Sigma_{1})<L\left(  \partial\Sigma\right)  $.

(ii) $H(\Sigma_{1})=H\left(  \Sigma\right)  ,L(\partial\Sigma_{1})=L\left(
\partial\Sigma\right)  ,\partial\Sigma_{1}=\partial\Sigma;$ and moreover
$B_{f_{1}}^{\ast}\left(  \partial\Delta\right)  >B_{f}^{\ast}\left(
\partial\Delta\right)  \ $if and only if $C_{f}^{\ast}\left(  \Delta\right)
\neq\emptyset.$
\end{corollary}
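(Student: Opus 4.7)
The plan is to iterate Lemma \ref{move-in-1}. If $C_f^{\ast}(\Delta) = \emptyset$ to begin with, I take $\Sigma_1 = \Sigma$ and conclusion (ii) holds with the biconditional vacuous (neither side true). Otherwise, set $\Sigma^{(0)} = \Sigma$ and, as long as $C_{f^{(k)}}^{\ast}(\Delta) \neq \emptyset$, apply Lemma \ref{move-in-1} to produce $\Sigma^{(k+1)} \in \mathcal{C}^{\ast}(L,m)$ with $H(\Sigma^{(k+1)}) \geq H(\Sigma^{(k)})$, $L(\partial\Sigma^{(k+1)}) \leq L(\partial\Sigma^{(k)})$, and $B_{f^{(k+1)}}^{\ast}(\Delta) \leq B_{f^{(k)}}^{\ast}(\Delta) - 1$. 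Since $B^{\ast}(\Delta)$ is a nonnegative integer that strictly drops at each step, the iteration terminates in at most $B_f^{\ast}(\Delta)$ rounds, and the terminal surface $\Sigma_1$ satisfies $C_{f_1}^{\ast}(\Delta) = \emptyset$, $H(\Sigma_1) \geq H(\Sigma)$, and $L(\partial\Sigma_1) \leq L(\partial\Sigma)$ by transitivity.

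To invoke Lemma \ref{move-in-1} at each intermediate step I must check that hypothesis (\ref{210615}) is preserved. Using the monotonicity above,
\[
H(\Sigma^{(k)}) \geq H(\Sigma) > H_L - \frac{\pi}{2L(\partial\Sigma)} \geq H_L - \frac{\pi}{2L(\partial\Sigma^{(k)})},
\]
so (\ref{210615}) holds for every $\Sigma^{(k)}$, including $\Sigma_1$.

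To split into (i) and (ii), I examine whether the boundary length ever strictly decreases. If at some step $L(\partial\Sigma^{(k+1)}) < L(\partial\Sigma^{(k)})$, then $L(\partial\Sigma_1) < L(\partial\Sigma)$, and combined with the standing assumption $C_f^{\ast}(\Delta) \neq \emptyset$ this is conclusion (i). Otherwise $L$ is preserved at every step, and the ``moreover'' clause of Lemma \ref{move-in-1} then forces $\partial\Sigma^{(k+1)} = \partial\Sigma^{(k)}$ and $B_{f^{(k+1)}}^{\ast}(\partial\Delta) > B_{f^{(k)}}^{\ast}(\partial\Delta)$. A glance at the proof of Lemma \ref{move-in-1} shows that when $L$ is preserved we are necessarily in Case (3), where area, boundary and (by (\ref{bb=0})) $\overline{n}(\cdot, E_q)$ are all unchanged, hence $H(\Sigma^{(k+1)}) = H(\Sigma^{(k)})$. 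Telescoping gives $\partial\Sigma_1 = \partial\Sigma$, $H(\Sigma_1) = H(\Sigma)$, and $B_{f_1}^{\ast}(\partial\Delta) > B_f^{\ast}(\partial\Delta)$, which is conclusion (ii).

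The only subtle point is the equality $H(\Sigma_1) = H(\Sigma)$ in case (ii), which is stronger than what Lemma \ref{move-in-1} literally states ($\geq$ only). Recovering it requires unpacking Case (3) of the proof of that lemma via the identity $H(\Sigma^{(k+1)}) = H(\Sigma^{(k)}) + 4\pi(v-1)\chi_{E_q}(b_1)/L(\partial\Sigma^{(k)})$ together with (\ref{bb=0}), which forces $b_1 \notin E_q$; the remaining steps are routine bookkeeping.
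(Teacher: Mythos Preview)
Your proof is correct and follows the same iterative strategy as the paper: apply Lemma \ref{move-in-1} repeatedly, using the strict drop in $B^{\ast}(\Delta)$ to guarantee termination, and track whether $L$ ever strictly decreases to decide between (i) and (ii). Your explicit verification that (\ref{210615}) persists at each step, and your observation that the equality $H(\Sigma_1)=H(\Sigma)$ in case (ii) requires unpacking Case (3) of the lemma's proof (since the lemma as stated only gives $\geq$), are both points the paper glosses over or handles implicitly.
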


\begin{proof}
When $C_{f}^{\ast}\left(  \Delta\right)  =\emptyset,$ then $\Sigma_{1}=\Sigma$
is the desired surface and (ii) holds. So we assume $C_{f}^{\ast}\left(
\Delta\right)  \neq\emptyset.$ Then by Lemma \ref{move-in-1}, there exists a
surface $\Sigma_{1}^{\prime}=\left(  f_{1}^{\prime},\overline{\Delta}\right)
\in\mathcal{C}^{\ast}\left(  L,m\right)  $ such that
\[
H(\Sigma_{1}^{\prime})\geq H\left(  \Sigma\right)  ,L(\partial\Sigma
_{1}^{\prime})\leq L(\partial\Sigma),
\]
and%
\begin{equation}
B_{f_{1}^{\prime}}^{\ast}\left(  \Delta\right)  \leq B_{f}^{\ast}\left(
\Delta\right)  -1. \label{as2-13}%
\end{equation}
Moreover, $L(\partial\Sigma_{1}^{\prime})=L(\partial\Sigma)$ if and only if
$\partial\Sigma_{1}^{\prime}=\partial\Sigma,H(\Sigma_{1}^{\prime})=H\left(
\Sigma\right)  $ and $C_{f_{1}^{\prime}}^{\ast}\left(  \partial\Delta\right)
>C_{f}^{\ast}\left(  \partial\Delta\right)  .$ It is clear that $\Sigma
_{1}^{\prime}$ again satisfies the inequality (\ref{210615}). Repeating this
procedure at most $B_{f_{1}^{\prime}}^{\ast}\left(  \Delta\right)  $ times, we
can obtain the desired surface $\Sigma_{1}$.
\end{proof}

Next, we will establish some lemmas to remove the branch point on the boundary.

\begin{lemma}
\label{move-b-1}Let $\Sigma=\left(  f,\overline{\Delta}\right)  \in
\mathcal{C}^{\ast}\left(  L,m\right)  $ be a surface satisfying the inequality
(\ref{210615}) with the $\mathcal{C}^{\ast}\left(  L,m\right)  $-partitions
(\ref{a1}) and (\ref{a2}). Suppose that

(A) $f$ has no branch points in $\Delta\backslash f^{-1}(E_{q})$;

(B) For the first term $\alpha_{1}\left(  a_{1},a_{2}\right)  $ of (\ref{a1}),
$\alpha_{1}\left(  a_{1},a_{2}\right)  \backslash\{a_{2}\}$ contains a branch
point $p_{0}$ of $f$ with $p_{0}\notin f^{-1}(E_{q}).$ $p_{1}\ $is a point in
$\alpha_{1}\left(  p_{0},a_{2}\right)  $ such that $f\left(  p_{0}\right)
\neq f(p_{1}),$ $\left[  \alpha_{1}\left(  p_{0},p_{1}\right)  \backslash
\{p_{1}\}\right]  \cap f^{-1}(E_{q})=\emptyset$ and that $\alpha_{1}^{\circ
}\left(  p_{0},p_{1}\right)  $ contains no branch point of $f$;

(C) For $b_{0}=f\left(  p_{0}\right)  $ and $b_{1}=f\left(  p_{1}\right)  ,$
the subarc $c_{1}^{\prime}=c_{1}\left(  b_{0},b_{1}\right)  $ of $c_{1}$ has
$v=v_{f}\left(  p_{0}\right)  $ distinct $f$-lifts $\beta_{1}\left(
p_{0},p_{1}\right)  ,\beta_{2}\left(  p_{0},p_{2}\right)  ,\dots,\beta
_{v}\left(  p_{0},p_{v}\right)  ,$ arranged anticlockwise around $p_{0}$, such
that $\beta_{l}\backslash\{p_{0},p_{l}\}\subset\Delta$ for $l=2,\dots,v.$

Then there exists a surface $\Sigma_{1}=\left(  f_{1},\overline{\Delta
}\right)  \in\mathcal{C}^{\ast}\left(  L,m\right)  $ such that there is no
branch points of $f_{1}$ in $\Delta\backslash f_{1}^{-1}(E_{q})$, and one of
the following alternatives (i) and (ii) holds:

(i) The partition number $m\geq2,$%
\[
H(\Sigma_{1})\geq H\left(  \Sigma\right)  ,L(\partial\Sigma_{1})<L(\partial
\Sigma),
\]
and
\begin{equation}
\#\left(  \partial\Delta\right)  \cap f_{1}^{-1}(E_{q})\leq\#\left(
\partial\Delta\right)  \cap f^{-1}(E_{q}). \label{eq<eq}%
\end{equation}
Moreover
\begin{equation}
\#C_{f_{1}}^{\ast}\left(  \partial\Delta\right)  \leq\#C_{f}^{\ast}\left(
\partial\Delta\right)  , \label{as2-10}%
\end{equation}
with equality only if one of the following relations (\ref{as2-11}%
)--(\ref{23-3}) holds:
\begin{equation}
B_{f_{1}}^{\ast}\left(  \partial\Delta\right)  \leq B_{f}^{\ast}\left(
\partial\Delta\right)  -1, \label{as2-11}%
\end{equation}%
\begin{equation}
\mathfrak{L}\left(  \partial\Sigma_{1}\right)  \leq\mathfrak{L}\left(
\partial\Sigma\right)  -1, \label{23-1}%
\end{equation}%
\begin{equation}
\Sigma_{1}=\left(  f_{1},\overline{\Delta}\right)  \in\mathcal{C}^{\ast
}\left(  L,m-1\right)  , \label{23-2}%
\end{equation}%
\begin{equation}
A(\Sigma_{1})\leq A\left(  \Sigma\right)  -4\pi. \label{23-3}%
\end{equation}

(ii) $p_{l},l=1,2,\dots,v,$ are distinct, $\left\{  p_{l}\right\}  _{l=2}%
^{v}\subset\Delta$, $p_{1}\notin f^{-1}(E_{q})$, $\partial\Sigma_{1}%
=\partial\Sigma,H(\Sigma_{1})=H(\Sigma),$ $v_{f}(x)=v_{f_{1}}(x)$ for all
$x\in\left(  \partial\Delta\right)  \backslash\{p_{0},p_{1}\}$, $v_{f_{1}%
}(p_{0})=1$ and
\begin{equation}
v_{f_{1}}\left(  p_{1}\right)  =v_{f}\left(  p_{1}\right)  +v-1, \label{=vv-1}%
\end{equation}
and moreover, (\ref{a1}) and (\ref{a2}) are still $\mathcal{C}^{\ast}\left(
L,m\right)  $-partitions of $\partial\Sigma_{1},$%
\begin{equation}
B_{f_{1}}^{\ast}\left(  \partial\Delta\right)  =B_{f}^{\ast}\left(
\partial\Delta\right)  , \label{b=b}%
\end{equation}
and
\begin{equation}
\#C_{f_{1}}^{\ast}\left(  \partial\Delta\right)  \leq\#C_{f}^{\ast}\left(
\partial\Delta\right)  , \label{c=c}%
\end{equation}
equality holding if and only if $p_{1}\notin C_{f}^{\ast}\left(
\partial\Delta\right)  \cup f^{-1}(E_{q}).\label{chenges to < or = copy(1)}$
\end{lemma}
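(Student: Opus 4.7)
The plan is to transplant the case analysis of Lemma~\ref{move-in-1} (which handled an interior branch point) to the boundary branch point $p_0 \in \alpha_1$, using as cutting data the $v$ lifts $\beta_l = \beta_l(p_0, p_l)$ of $c_1' = c_1(b_0, b_1)$ supplied by hypothesis~(C), with $\beta_1 \subset \alpha_1$ and $\beta_l^{\circ} \subset \Delta$ for $l \geq 2$. The configuration of $\Sigma$ splits into three families: (X) some $p_{l_1} = p_{l_2}$ with $l_1 \neq l_2$; (Y) all $p_l$ are distinct and $\{p_l\}_{l=2}^{v} \subset \Delta$; (Z) all $p_l$ are distinct and some $p_l \in \partial\Delta$ for $l \geq 2$. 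Since $\beta_l^{\circ} \cap f^{-1}(E_q) = \emptyset$ and $f$ has no interior non-special branch points by hypothesis~(A), a coincidence $p_{l_1} = p_{l_2}$ would (by a claim analogous to Claim~\ref{Cl1}) force $b_1 \in E_q$ and produce a closed subsurface, yielding $R(\Sigma_1) \geq R(\Sigma) + 4\pi$ with $\partial\Sigma_1 = \partial\Sigma$; this contradicts the $H_L$-bound (\ref{210615}) exactly as in Cases~(1)--(2) of the proof of Lemma~\ref{move-in-1}, so~(X) cannot occur.

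In case~(Z), take the least index $l \geq 2$ with $p_l \in \partial\Delta$ and split $\overline{\Delta}$ along the simple $f$-lift $-\beta_l + \beta_1$ (running from one boundary point to another) into two Jordan subdomains $\Delta_1, \Delta_2$. Applying Lemma~\ref{patch} (sewing each subsurface along the cut in reverse) produces two new surfaces whose boundary lengths, areas and $E_q$-counts satisfy $L(\partial\Sigma_1) + L(\partial\Sigma_2) = L(\partial\Sigma) - 2L(\beta_l) < L(\partial\Sigma)$ and $R(\Sigma_1) + R(\Sigma_2) \geq R(\Sigma)$; picking the one with larger $H$ keeps $H \geq H(\Sigma)$. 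The partitions~(\ref{a1}),(\ref{a2}) descend to $\mathcal{C}^{\ast}$-partitions of each piece as in Case~(5) of Lemma~\ref{move-in-1}, so the chosen $\Sigma_1 \in \mathcal{C}^{\ast}(L,m)$ with $m \geq 2$, and (\ref{eq<eq}), (\ref{as2-10}) follow by tracking which branch points and $E_q$-preimages survive in the retained half; the alternatives (\ref{as2-11})--(\ref{23-3}) record the various ways the shrinking of the boundary can be absorbed (a lost non-special boundary branch point, a reduction of the folding number $\mathfrak{L}$, a drop in the partition number $m$, or a collapse of area by $4\pi$ when a full round is cut off).

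In case~(Y), I perform the branch-movement surgery of Case~(3) of Lemma~\ref{move-in-1}: cut $\overline{\Delta}$ along $A = \bigcup_l \beta_l$, split each interior $\beta_l$ ($l \geq 2$) into a boundary pair $\beta_l', \beta_l''$, and re-identify these in a cyclically shifted fashion so that $p_0$ disperses into $v$ regular points $p_0^1, \ldots, p_0^v$ and the branching concentrates at $p_1$ with the new order given by (\ref{=vv-1}). Because the surgery only modifies $f$ along interior arcs, $\partial\Sigma_1 = \partial\Sigma$ with the same partitions~(\ref{a1}),(\ref{a2}), hence $L$ and $A$ are unchanged, and since $A \cap f^{-1}(E_q) \subset \{p_l\}_l$, we have $\overline{n}(\Sigma_1, E_q) = \overline{n}(\Sigma, E_q)$ when $p_1 \notin f^{-1}(E_q)$, giving $H(\Sigma_1) = H(\Sigma)$ and all the fine equalities (\ref{b=b}),(\ref{c=c}) of alternative~(ii). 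The main obstacle is the subcase $p_1 \in f^{-1}(E_q)$: the naive movement then creates a special branch point at $p_1$, which either sits at a partition vertex (so $\Sigma_1 \in \mathcal{C}^{\ast}(L,m)$ with $\partial\Sigma_1 = \partial\Sigma$ and an $R$-gain of $4\pi(v-1)$, contradicting (\ref{210615}) unless we first replace the movement by the closed-surface excision of Lemma~\ref{glue} (i) which forces a strict drop in $L$) or sits in $\alpha_1^{\circ}$ (so $p_1$ cannot stay in the partition, and one folds the arc $c_1'$ onto itself via Lemma~\ref{glue} (ii), collapsing two copies of $c_1'$ off $\partial\Sigma$ and thereby producing $L(\partial\Sigma_1) < L(\partial\Sigma)$ together with one of (\ref{as2-11})--(\ref{23-3})). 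The bookkeeping of this last subcase --- verifying that the combined movement-and-fold preserves $\mathcal{C}^{\ast}(L,m)$, creates no new non-special interior branch points, and selects the correct sub-alternative when equality holds in (\ref{as2-10}) --- is the delicate part of the proof.
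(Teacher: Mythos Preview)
Your three-case split (X)/(Y)/(Z) matches the paper's Cases~(1)--(3)/(4)/(5), but two of the cases go astray.

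In case~(Y) the subcase $p_1\in f^{-1}(E_q)$ is not an obstacle to be worked around but an impossibility: the cut-and-resew surgery keeps $\partial\Sigma_1=\partial\Sigma$ and gives $R(\Sigma_1)=R(\Sigma)+4\pi(v-1)$, so $H(\Sigma_1)>H_L$, contradicting $\Sigma_1\in\mathcal{F}(L)$ (and this needs only $\Sigma_1\in\mathcal{F}(L)$, not $\mathcal{C}^{\ast}(L,m)$, so whether $p_1$ is a partition vertex or in $\alpha_1^{\circ}$ is irrelevant). The paper simply concludes $p_1\notin f^{-1}(E_q)$ and verifies alternative~(ii). Your proposed fold/excision workarounds via Lemma~\ref{glue}(i)/(ii) are both unnecessary and inapplicable here: (i) requires coincident terminal points, (ii) requires both lifts to lie on $\partial U$, neither of which holds in~(Y). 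The ``delicate part'' you describe does not exist.

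In case~(Z) the splitting arc is $\beta_{l_1}$ alone (note $\beta_1\subset\partial\Delta$, so $-\beta_l+\beta_1$ is not an interior cut), and your length formula is wrong. The paper exploits $(f,\beta_{l_1})=c_1'=(f,\beta_1)$: the homeomorphism $h_2$ stretches $\beta_{l_1}$ onto $\beta_1+\gamma_1$, so the new boundary edge of $\Sigma_2'$ exactly replaces the old arc $(f,\beta_1)$, giving $\partial\Sigma_2'\sim(f,\gamma_2)$. For $\Delta_1$ one performs in addition the case-(Y) surgery on the interior lifts $\beta_2,\dots,\beta_{l_1-1}$ to obtain $\partial\Sigma_1'\sim(f,\gamma_1)$. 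Hence $L(\partial\Sigma_1')+L(\partial\Sigma_2')=L(\partial\Sigma)$, not $L(\partial\Sigma)-2L(\beta_l)$; strictness in~(i) comes simply from retaining only one piece. The verification that equality in~(\ref{as2-10}) forces one of (\ref{as2-11})--(\ref{23-3}) is the genuinely delicate part of the proof: it requires tracking $B^{\ast}_f$ across the split (Claims~\ref{CL2}--\ref{CL1} and~(\ref{BBB0})--(\ref{BBB})) and then a dichotomy on whether the discarded arc $\gamma_2$ contains one or at least two partition vertices~$a_j$ (Discussion~\ref{dis1}). Your sketch does not engage with this.

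(In case~(X) your claim that coincidence forces $b_1\in E_q$ fails when $p_{l_1}\in\partial\Delta$, since hypothesis~(C) has no maximality assumption on $c_1'$; but the Riemann--Hurwitz contradiction goes through without it, so this is a minor slip.)
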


\begin{proof}
By (C) we have that $\beta_{1}=\alpha_{1}(p_{0},p_{1})$. Write $A=\cup
_{l=1}^{v}\beta_{l}$. We will imitate the arguments in the proof of Lemma
\ref{move-in-1}. Under the partitions (\ref{a1}) and (\ref{a2}), we first
consider the case that $p_{l_{1}}=p_{l_{2}}$ for some pair $1\leq l_{1}%
<l_{2}\leq v.$ Then $\beta_{l_{1}}-\beta_{l_{2}}$ bounds a Jordan domain $D$
contained in $\Delta$, and we may face the following three Cases.

\noindent\textbf{Case (1). }$l_{1}=1$ and $l_{2}=2\ $(see Figure \ref{fig4.4} (1)).

\noindent\textbf{Case (2). }$1<l_{1}\ $and $p_{l_{1}}\in\partial\Delta$ (see
Figure \ref{fig4.4} (3)).

\noindent\textbf{Case (3). }$1<l_{1}$ $\ $and $p_{l_{1}}\in\Delta\ $(see
Figure \ref{fig4.4} (5)).

We show that none of the above three cases can occur, by deduce a
contradiction that $H_{L}>H_{L}.$

When Case (1) occurs, we put $h_{1}$ to be a homeomorphism from $\overline
{\Delta}$ onto $\overline{\Delta_{1}}=\overline{\Delta}\backslash D$ so that
$h_{1}$ is an identity on $\left(  \partial\Delta_{1}\right)  \cap
\partial\Delta$. Then put $\Sigma_{1}=\left(  f_{1},\overline{\Delta}\right)
$ with $f_{1}=f\circ h_{1}$ (See Figure \ref{fig4.4} (1) and (2)).

When Case (2) occurs, $\partial D$ divides $\Delta$ into three Jordan domains
$\Delta_{1}$, $D$ and $\Delta_{2}$ as in Figure \ref{fig4.4} (3). We can glue
the surfaces $\left(  f|_{\overline{\Delta_{1}}},\overline{\Delta_{1}}\right)
$ and $\left(  f|_{\overline{\Delta_{2}}},\overline{\Delta_{2}}\right)  $
together along the boundary $(f,\beta_{l_{1}})\sim(f,\beta_{l_{2}})$ to obtain
a new surface $\Sigma_{1}=\left(  f_{1},\overline{\Delta}\right)  $. Indeed,
we can take a continuous mapping $h_{2}:\overline{\Delta}\backslash
D\to\overline{\Delta}$ so that $h_{2}|_{\overline{\Delta_{1}}}:\overline
{\Delta_{1}}\rightarrow\overline{\Delta_{1}^{\prime}}$ (resp. $h_{2}%
|_{\overline{\Delta_{2}}}:\overline{\Delta_{2}}\rightarrow\overline{\Delta
_{2}^{\prime}}$) is an orientation-preserving homeomorphism, $f(h_{2}%
^{-1}(y))$ is a singleton for all $y\in\beta$, and $h_{2}$ is an identity on a
neighborhood of $\left(  \partial\Delta\right)  \backslash\{p_{0},p_{l_{1}}\}$
in $\overline{\Delta}$. Then we define $\Sigma_{1}=\left(  f_{1}%
,\overline{\Delta}\right)  $ with $f_{1}=f\circ h_{2}^{-1}$ (See Figure
\ref{fig4.4} (3) and (4)).

When Case (3) occurs, $\left(  \beta_{l_{1}}\cup\beta_{l_{2}}\right)
\backslash\{p_{0}\}\subset\Delta,$ and $\Delta_{1}=\Delta\backslash
\overline{D}$ is a domain as in Figure \ref{fig4.4} (5) when $l_{1}=1$ and
$l_{2}=2$. We can sew $\left(  f,\overline{\Delta\backslash D}\right)  $ along
$(f,\beta_{l_{1}})\sim(f,\beta_{l_{2}})$ to obtain a surface $\left(
f_{1},\overline{\Delta}\right)  $ so that $\beta_{l_{1}}-\beta_{l_{2}}$
becomes a simple path $\beta$, the line segment from $p_{0}$ to $p_{l_{1}}$ as
in Figure \ref{fig4.4} (5) and (6). In fact we can define $f_{1}:=f\circ
h_{3}^{-1}$, where $h_{3}:\overline{\Delta_{1}}\rightarrow\overline{\Delta}$
is an OPCOFOM so that $\beta_{l_{1}}$ and $\beta_{l_{2}}$ are mapped
homeomorphically onto $\beta$, $h_{3}(p_{l_{1}})=p_{l_{1}}$, $h_{3}%
(p_{0})=p_{0}$, $h_{3}$ is an identity on $\partial\Delta$ and on a
neighborhood of $\partial\Delta\backslash\{p_{0}\}$ in $\overline{\Delta}$,
and $h_{3}:\Delta_{1}\rightarrow\Delta_{1}^{\prime}$ is a homeomorphism.%

\begin{figure}
[ptb]
\begin{center}
\includegraphics[
height=2.872in,
width=4.1935in
]%
{f4.eps}%
\caption{ }%
\label{fig4.4}%
\end{center}
\end{figure}

In the above Cases (1)--(3), it is clear that $\Sigma_{1}$ also has
$\mathcal{C}^{\ast}\left(  L,m\right)  $-partitions as (\ref{a1}) and
(\ref{a2}), and the interior angle of $(f,\overline{D})$ at $p_{l_{1}}$ is a
positive multiple of $2\pi$. Then we have $v_{f_{1}}\left(  p_{l_{1}}\right)
\leq v_{f}\left(  p_{l_{1}}\right)  -1$. Then $\partial D\cap f^{-1}%
(E_{q})=\{p_{1}\}$ or $\emptyset$.

As in the proof of Claim \ref{closed}, $(f,\overline{D})$ can be sewn to be a
closed surface $\Sigma_{0}=(f_{0},S)$ along the equivalent paths
$(f,\beta_{l_{1}})$ and $(f,\beta_{l_{2}})$. Assume that the degree of $f_{0}$
is $d_{0}$. Then we have in any case of Cases (1), (2) and (3),
\[
\overline{n}\left(  \Sigma_{1},E_{q}\right)  \leq\overline{n}\left(
\Sigma,E_{q}\right)  -\overline{n}\left(  \Sigma_{0},E_{q}\right)  +1.
\]
On the other hand, as in the proof of Claim \ref{closed}, by Riemann-Hurwitz
formula, we have $\overline{n}\left(  \Sigma_{0},E_{q}\right)  \geq
(q-2)d_{0}+2$ with the equality holding if and only if $C_{f_{0}}(V)\subset
f_{0}^{-1}(E_{q}).$ Then we have
\[
\overline{n}\left(  \Sigma_{1},E_{q}\right)  \leq\overline{n}\left(
\Sigma,E_{q}\right)  -\overline{n}\left(  \Sigma_{0},E_{q}\right)
+1\leq\overline{n}\left(  \Sigma,E_{q}\right)  -(q-2)d_{0}-1.
\]
Now we have $A(\Sigma)=A(\Sigma_{0})+A(\Sigma_{1})$ and $A(\Sigma_{0})=4\pi
d_{0}$. Then
\begin{align*}
R\left(  \Sigma_{1}\right)   &  =\left(  q-2\right)  A(\Sigma_{1}%
)-4\pi\overline{n}\left(  \Sigma_{1}\right) \\
&  \geq\left(  q-2\right)  (A(\Sigma)-4\pi d_{0})-4\pi\left[  \overline
{n}\left(  \Sigma\right)  -(q-2)d_{0}-1\right] \\
&  =R\left(  \Sigma\right)  +4\pi.
\end{align*}
On the other hand, we have $L(\partial\Sigma)=L(\partial\Sigma_{1})$. Then we
derive
\[
H(\Sigma_{1})\geq\frac{R(\Sigma)+4\pi}{L(\partial\Sigma)}=H(\Sigma)+\frac
{4\pi}{L(\partial\Sigma)},
\]
which with (\ref{210615}) implies the contradiction that $H_{L}\geq
H(\Sigma_{1})>H_{L}$. Hence Cases (1)--(3) can not occur.

There are still two cases left.

\noindent\textbf{Case (4).} $p_{l},l=1,\dots,v,$ are distinct from each other
and $\{p_{l}\}_{l=2}^{v}\subset\Delta$.

\noindent\textbf{Case (5).} $p_{l},l=1,\dots,v,$ are distinct from each other
and $p_{l_{1}}\in\partial\Delta$ for some $2\leq l_{1}\leq v$. In particular,
$\{p_{2},\dots,p_{l_{1}-1}\}\subset\Delta$ when $l_{1}>2,$ and it is possible
that $p_{l_{2}}\in\partial\Delta$ for some $l_{1}<l_{2}\leq v.$

Assume Case (4) occurs. Except for a few differences, the following discussion
is similar to the Cases (3) and (4) in the proof of Lemma \ref{move-in-1}.
Here, we just present the arguments for $v=3$, as in Figure \ref{fig4.5}. Cut
$\Delta$ along the lifts $\beta_{2}$ and $\beta_{3}$ and split $\beta_{2}$ and
$\beta_{3}$ via an OPCOFOM $h$ from a closed Jordan domain $\overline
{\Delta_{1}^{\prime}}$ as in Figure \ref{fig4.5} (2) onto $\overline{\Delta}$
such that $h:\Delta_{1}^{\prime}\rightarrow\Delta_{1}=\Delta\backslash
(\beta_{2}\cup\beta_{3})$ is a homeomorphism.%

\begin{figure}
[ptb]
\begin{center}
\includegraphics[
height=4.1822in,
width=4.1935in
]%
{f5.eps}%
\caption{ }%
\label{fig4.5}%
\end{center}
\end{figure}

Then we obtain a surface $\Sigma_{1}^{\prime}=\left(  f_{1}^{\prime}%
,\Delta_{1}^{\prime}\right)  $ such that
\[
\left(  f_{1},\beta_{1}^{\prime}\right)  \sim\left(  f_{1},\beta_{2}^{\prime
}\right)  \sim\left(  f_{1},\beta_{2}^{\prime\prime}\right)  \sim\left(
f_{1},\beta_{3}^{\prime}\right)  \sim\left(  f_{1},\beta_{3}^{\prime\prime
}\right)  .
\]
It is clear that we can recover the surface $\Sigma$ when we identify
$\beta_{2}^{\prime}$ with $\beta_{2}^{\prime\prime}$ and $\beta_{3}^{\prime}$
with $\beta_{3}^{\prime\prime}$. However, by Lemma \ref{cut-3} (ii), we can
also identify $\beta_{1}^{\prime}$ with $\beta_{2}^{\prime},$ and $\beta
_{2}^{\prime\prime}$ with $\beta_{3}^{\prime},$ by deformations in Figure
\ref{fig4.5} (2)-(4), resulting a new surface $\Sigma_{1}=\left(
f_{1},\overline{\Delta}\right)  .$ On the other hand, since $\beta_{l}^{\circ
},l=1,\dots,v,$ contains no point of $f^{-1}(E_{q})$ and $C_{f}^{\ast}\left(
\Delta\right)  =\emptyset,$ $f$ is homeomorphic in neighborhoods of $\beta
_{j}^{\circ},j=2,\dots,v.$ Thus we can conclude the following.

\begin{summary}
\label{Sum1}$\partial\Sigma_{1}\sim\partial\Sigma$. There exists a
neighborhood $N_{1}$ of $\left(  \partial\Delta\right)  \backslash
\{p_{0},p_{1}\}$ in $\overline{\Delta}$ and a neighborhood $N_{1}^{\prime}$ of
$\left(  \partial\Delta\right)  \backslash\{p_{0}^{3},p_{1}^{\prime}\}$ in
$\overline{\Delta}$ such that $\left(  f,N_{1}\right)  \sim\left(  f_{1}%
,N_{1}^{\prime}\right)  .$ In fact as in Figure \ref{fig4.5} (2) and (3),
$\beta_{1}^{\prime\circ},\beta_{2}^{\prime\prime\circ},\beta_{3}^{\prime
\prime\circ}$ have neighborhoods in $\overline{\Delta_{1}^{\prime}}$ so that
the restrictions of $f_{1}^{\prime}$ to them, respectively, are equivalent to
the restriction of $f$ to a neighborhood of $\beta_{1}$ in $\overline{\Delta
}.$ Thus we may replace $p_{1}^{\prime}$ and $p_{0}^{3}$ by $p_{1}$ and
$p_{0}$, and make $\partial\Sigma_{1}=\partial\Sigma$ via a homeomorphism of
$\overline{\Delta}.$ Then partitions (\ref{a1}) and (\ref{a2}) are both
$\mathcal{C}^{\ast}\left(  L,m\right)  $-partitions of $\partial\Sigma_{1}%
\ $if and only if $p_{1}\notin f^{-1}(E_{q})\cap\alpha_{1}^{\circ},$ and in
general $\Sigma_{1}\in\mathcal{C}^{\ast}\left(  L,m+1\right)  \subset
\mathcal{F}\left(  L\right)  .$
\end{summary}

It is clear that $A(\Sigma_{1})=A(\Sigma)\ $and $L(\partial\Sigma
_{1})=L(\partial\Sigma).$ We can also see that $\{p_{0}^{l}\}_{l=1}^{v}$
become regular points of $f_{1}$ and
\[
v_{f_{1}}(p_{1})=v_{f}\left(  p_{1}\right)  +v_{f}\left(  p_{2}\right)
+\dots+v_{f}\left(  p_{v}\right)  .
\]
It implies that
\[
\overline{n}\left(  \Sigma_{1}\right)  =\left\{
\begin{array}
[c]{ll}%
\overline{n}\left(  \Sigma\right)  , & \mathrm{if\ }p_{1}\notin f^{-1}%
(E_{q}),\\
\overline{n}\left(  \Sigma\right)  -v+1, & \mathrm{if\ }p_{1}\in f^{-1}%
(E_{q}).
\end{array}
\right.
\]
Thus in the case $p_{1}\in f^{-1}(E_{q})$, we have
\[
R(\Sigma_{1})=R(\Sigma)+\left(  v-1\right)  4\pi\geq R(\Sigma)+4\pi,
\]
which with (\ref{210615}) implies a contradiction that $H_{L}\geq H(\Sigma
_{1})\geq H(\Sigma)+\frac{4\pi}{L(\partial\Sigma_{1})}>H_{L}$. So we have to
assume $p_{1}\notin f^{-1}(E_{q}),$ which implies $\Sigma_{1}\in
\mathcal{C}^{\ast}\left(  L,m\right)  $ and $H(\Sigma_{1})=H(\Sigma),$ and
moreover
\begin{equation}
\{p_{l}\}_{l=1}^{v}\cap f^{-1}(E_{q})=\emptyset. \label{pf0}%
\end{equation}
Then each $p_{l}\notin C_{f}\left(  \Delta\right)  $ and $v_{f_{1}}%
(p_{1})=v_{f}\left(  p_{1}\right)  +v-1,$ say
\[
b_{f_{1}}(p_{1})-b_{f}\left(  p_{1}\right)  =v-1.
\]
On the other hand we have $v_{f_{1}}(p_{0})=1$ and $v_{f}(p_{0})=v,$ which
implies
\[
b_{f_{1}}\left(  p_{0}\right)  -b_{f}(p_{0})=-v+1,
\]
and thus by (\ref{pf0}) we have
\begin{equation}
B_{f_{1}}^{\ast}\left(  \left\{  p_{0},p_{1}\right\}  \right)  =B_{f}^{\ast
}\left(  \left\{  p_{0},p_{1}\right\}  \right)  . \label{p0=p1}%
\end{equation}

It is clear that, by Summary \ref{Sum1}, $B_{f_{1}}^{\ast}\left(  \left(
\partial\Delta\right)  \backslash\{p_{0},p_{1}\}\right)  =B_{f}^{\ast}\left(
\left(  \partial\Delta\right)  \backslash\{p_{0},p_{1}\}\right)  .$ Then we
have by (\ref{p0=p1})
\[
B_{f_{1}}^{\ast}\left(  \partial\Delta\right)  =B_{f}^{\ast}\left(
\partial\Delta\right)  .
\]
On the other hand, we have $\#C_{f_{1}}^{\ast}\left(  \left\{  p_{0}%
,p_{1}\right\}  \right)  =\#C_{f_{1}}^{\ast}\left(  p_{1}\right)  =1$ and
$\#C_{f}^{\ast}\left(  \left\{  p_{0},p_{1}\right\}  \right)  =1$ if and only
if $p_{1}$ is not a branch point (note that we are in the environment of
$p_{1}\notin f^{-1}(E_{q}),$ which implies $p_{1}\notin f_{1}^{-1}(E_{q})$).
Thus we have $\#C_{f_{1}}^{\ast}\left(  \partial\Delta\right)  \leq
\#C_{f}^{\ast}\left(  \partial\Delta\right)  $ equality holding if and only if
$p_{1}\notin C_{f}^{\ast}\left(  \partial\Delta\right)  \cup f^{-1}(E_{q}).$
Hence, all conclusions in (ii) hold in Case (4).

{Assume Case (5) occurs. }When $m=1,\partial\Sigma=c_{1}\left(  q_{1}%
,q_{1}\right)  $ is a simple circle and thus $f^{-1}(b_{1})\cap\partial
\Delta=\{p_{1}\}$. This case can not occur, since in Case (5), $\{p_{1}%
,p_{l_{1}}\}\subset f^{-1}(b_{1})\cap\partial\Delta$ and $p_{1}\neq p_{l_{1}}%
$. So we have $m\geq2.$

It is clear that $f$ restricted to a neighborhood of $\beta_{l_{1}}^{\circ}$
is homeomorphic and $\beta_{l_{1}}$ divides $\Delta$ into two Jordan domains
$\Delta_{1}$ and $\Delta_{2}$. Denote by $\Delta_{1}$ the domain on the right
hand side of $\beta_{l_{1}}.$ Let $\gamma_{1}$ be the arc of $\partial\Delta$
from $p_{1}$ to $p_{l_{1}}$ and $\gamma_{2}$ be the complement arc of
$\gamma_{1}$ in $\partial\Delta$, both oriented anticlockwise. Recall that
$\beta_{1},\beta_{2},\dots,\beta_{v}$ are arranged anticlockwise around
$p_{0}$. Then we have $\cup_{l=2}^{l_{1}-1}\beta_{l}\backslash\{p_{0}%
\}\subset\Delta_{1},$ while $\left\{  \beta_{l}\right\}  _{l=l_{1}+1}^{v}$ is
contained in $\overline{\Delta_{2}}.$ Based on (\ref{a1}) and (\ref{a2}), we
also have the partitions
\begin{equation}
\gamma_{1}=\alpha_{1}\left(  p_{1},a_{2}\right)  +\alpha_{2}+\dots
+\alpha_{k-1}+\alpha_{k}\left(  a_{k},p_{l_{1}}\right)  , \label{a6}%
\end{equation}
and
\begin{equation}
\gamma_{2}=\alpha_{k}\left(  p_{l_{1}},a_{k+1}\right)  +\alpha_{k+1}%
+\dots+\alpha_{m}+\alpha_{1}\left(  a_{1},p_{1}\right)  , \label{a7}%
\end{equation}
where%
\[
p_{l_{1}}\in\alpha_{k}\left(  a_{k},a_{k+1}\right)  \backslash\{a_{k+1}\}.
\]
We can see that
\[
v_{f|_{\overline{\Delta_{1}}}}\left(  p_{0}\right)  =l_{1}-1,\quad
v_{f|_{\overline{\Delta_{2}}}}\left(  p_{0}\right)  =v-l_{1}+1.
\]
Considering $p_{0}\notin f^{-1}(E_{q})$, we have
\begin{equation}
B_{f|_{\overline{\Delta_{1}}}}^{\ast}\left(  p_{0}\right)  =l_{1}-2,\quad
B_{f|_{\overline{\Delta_{2}}}}^{\ast}\left(  p_{0}\right)  =v-l_{1},
\label{BBB0}%
\end{equation}
and
\begin{equation}
B_{f}^{\ast}\left(  p_{0}\right)  -B_{f|_{\overline{\Delta_{1}}}}^{\ast
}\left(  p_{0}\right)  -B_{f|_{\overline{\Delta_{2}}}}^{\ast}\left(
p_{0}\right)  =1. \label{BBB}%
\end{equation}

Now we shall consider $\Delta_{1}$ and $\Delta_{2}$ separately.

Firstly, let $h_{2}$ be a homeomorphism from $\overline{\Delta_{2}}$ onto
$\overline{\Delta}$ such that $h_{2}|_{\gamma_{2}\setminus\beta_{1}}=id$ and
$h_{2}|_{\beta_{l_{1}}}=\beta_{1}^{\circ}+\gamma_{1}.$ Recall that $\beta
_{1}=\alpha_{1}\left(  p_{0},p_{1}\right)  $. Then we can construct a new
surface as
\[
\Sigma_{2}^{\prime}=\left(  f_{2}^{\prime},\overline{\Delta}\right)  =\left(
f\circ h_{2}^{-1},\overline{\Delta}\right)  ,
\]
with
\begin{align*}
L(f_{2}^{\prime},\partial\Delta)  &  =L(f,(\partial\Delta_{2})\backslash
\beta_{l_{1}})+L(f,\beta_{l_{1}})\\
&  =L(f,(\partial\Delta_{2})\backslash\beta_{l_{1}})+L(f,\beta_{1})\\
&  =L(\gamma_{2})<L.
\end{align*}
Since $p_{1}\neq p_{l_{1}}$, $f(p_{1})=f(p_{l_{1}})=b_{1}$ and $f$ is
injective on each $\alpha_{k}\left(  a_{k},a_{k+1}\right)  \backslash
\{a_{k+1}\}$, we conclude that either of the two partitions (\ref{a6}) and
(\ref{a7}) contains at least two terms. Since the sum of terms of (\ref{a6})
and (\ref{a7}) is at most $m+2$, we conclude that either of (\ref{a6}) and
(\ref{a7}) contains at most $m$ terms. Thus we have $\Sigma_{2}^{\prime}%
\in\mathcal{C}^{\ast}\left(  L,m\right)  .$ Hence, summarizing the above
discussion, we have

\begin{claim}
\label{CL2}$C_{f_{2}^{\prime}}^{\ast}\left(  \Delta\right)  =\emptyset$,
$\Sigma_{2}^{\prime}\in\mathcal{C}^{\ast}\left(  L,m\right)  $, and moreover,
by definition of $f_{2}^{\prime}$,
\[
\#C_{f_{2}^{\prime}}^{\ast}\left(  \partial\Delta\right)  =\#C_{f|_{\overline
{\Delta_{2}}}}^{\ast}\left(  \partial\Delta_{2}\right)  =\#C_{f|_{\overline
{\Delta_{2}}}}^{\ast}\left(  \gamma_{2}\right)  \leq\#C_{f}^{\ast}\left(
\partial\Delta\right)  ,
\]%
\[
\#\left(  \partial\Delta\right)  \cap f_{2}^{\prime-1}\left(  E_{q}\right)
=\#\gamma_{2}\cap f^{-1}\left(  E_{q}\right)  \leq\#\left(  \partial
\Delta\right)  \cap f^{-1}\left(  E_{q}\right)  ,
\]%
\[
B_{f_{2}^{\prime}}^{\ast}\left(  \partial\Delta\right)  =\#B_{f|_{\overline
{\Delta_{2}}}}^{\ast}\left(  \partial\Delta_{2}\right)  \leq\#B_{f}^{\ast
}\left(  \partial\Delta\right)  -1.
\]

\end{claim}%

\begin{figure}
[ptb]
\begin{center}
\includegraphics[
height=3.4688in,
width=4.1511in
]%
{f7.eps}%
\caption{ }%
\label{fig4.7}%
\end{center}
\end{figure}

Next, we construct a new surface $\Sigma_{1}^{\prime}=\left(  f_{1}^{\prime
},\overline{\Delta}\right)  $ as follows. Denote by $\Delta_{1}^{1}=\Delta
_{1}\backslash\cup_{l=2}^{l_{1}-1}\beta_{l}$, which is a simply connected
domain. Cutting $\Delta_{1}^{1}$ along the paths $\cup_{l=1}^{l_{1}-1}%
\beta_{j}$, we can obtain a Jordan domain $\Delta_{1}^{2}$ as in Figure
\ref{fig4.7} (2) where $l_{1}=3$. Indeed, there exists an OPCOFOM
$h_{1}:\overline{\Delta_{1}^{2}}\rightarrow\overline{\Delta_{1}^{1}}$ such
that the restrictions
\[
h_{1}:\Delta_{1}^{2}\rightarrow\Delta_{1}^{1},\quad h_{1}:\beta_{l}^{\prime
}\rightarrow\beta_{l},\quad h_{1}:\beta_{l}^{\prime\prime}\rightarrow\beta_{l}%
\]
are homeomorphisms for $l=2,\dots,l_{1}-1$. Then the surface $F_{1}:=\left(
g_{1},\overline{\Delta_{1}^{2}}\right)  =\left(  f\circ h_{1},\overline
{\Delta_{1}^{2}}\right)  $ is simply connected and we can recover the surface
$\left(  f|_{\overline{\Delta_{1}}},\overline{\Delta_{1}}\right)  $ when we
glue $F_{1}$ along the pairs $\left(  g_{1},\beta_{l}^{\prime}\right)  $ and
$\left(  g_{1},\beta_{l}^{\prime\prime}\right)  $ for $l=2,\dots,l_{1}-1.$
Since
\[
\left(  g_{1},\beta_{1}^{\prime}\right)  \sim\left(  g_{1},\beta_{2}^{\prime
}\right)  ,\left(  g_{1},\beta_{2}^{\prime\prime}\right)  \sim\left(
g_{1},\beta_{3}^{\prime}\right)  ,\cdots,\left(  g_{1},\beta_{l_{1}-1}%
^{\prime\prime}\right)  \sim\left(  g_{1},\beta_{l_{1}}^{\prime}\right)  ,
\]
we can also glue $F_{1}$ along the above equivalent pairs and obtain a new
surface $\Sigma_{1}^{\prime}=\left(  f_{1}^{\prime},\overline{\Delta}\right)
,$ as the deformations described in Figure \ref{fig4.7} (2)--(4). In this way,
$p_{1},\dots,p_{l_{1}}$ are glued into a single point $p_{1}^{\prime}%
\in\partial\Delta$. It is clear that we have
\[
v_{f_{1}^{\prime}}\left(  p_{1}^{\prime}\right)  \leq v_{f}\left(
p_{1}\right)  +v_{f}\left(  p_{2}\right)  +\cdots+v_{f}\left(  p_{l_{1}%
-1}\right)  +v_{f|_{\overline{\Delta_{1}}}}\left(  p_{l_{1}}\right)  .
\]
When $b_{1}\notin E_{q}$, by condition (A) of Lemma \ref{move-b-1} we have
$v_{f}\left(  p_{2}\right)  =\cdots=v_{f}\left(  p_{l_{1}-1}\right)  =1$.
Thus
\begin{equation}
v_{f_{1}^{\prime}}\left(  p_{1}^{\prime}\right)  \leq v_{f}\left(
p_{1}\right)  +v_{f|_{\overline{\Delta_{1}}}}\left(  p_{l_{1}}\right)
+l_{1}-2. \label{vf}%
\end{equation}
As in Figure \ref{fig4.7} (2) or (3), $p_{0}^{1},\ldots,p_{0}^{l_{1}-1}$ are
regular points of $g_{1}$, and $g_{1}$ is homeomorphic on some neighborhoods
of $(\beta_{j}^{\prime})^{\circ}$ and $(\beta_{j}^{\prime\prime})^{\circ}$ in
$\overline{\Delta_{1}^{2}}$ for $j=1,\ldots,l_{1}-1$. Thus $f_{1}^{\prime}$ is
homeomorphic on some neighborhood of $\beta_{j}^{\prime}\backslash
\{p_{1}^{\prime}\}$ for $j=1,\ldots,l_{1}-1$. Therefore by (\ref{vf}) we have that

\begin{claim}
\label{CL1} $C_{f_{1}^{\prime}}^{\ast}\left(  \Delta\right)  =\emptyset$,
$\left(  f_{1}^{\prime},\partial\Delta\right)  \sim\left(  f,\gamma
_{1}\right)  ,$ (\ref{a6}) is an $\mathcal{F}(L,k)$-partition of
$\partial\Sigma_{1}^{\prime}$ and moreover
\begin{align*}
B_{f_{1}^{\prime}}^{\ast}\left(  p_{1}^{\prime}\right)   &  =0,\mathrm{\ if\ }%
p_{1}\in f^{-1}(E_{q});\\
B_{f_{1}^{\prime}}^{\ast}\left(  p_{1}^{\prime}\right)   &  =v_{f_{1}^{\prime
}}\left(  p_{1}^{\prime}\right)  -1\leq B_{f}^{\ast}\left(  p_{1}\right)
+B_{f|_{\overline{\Delta_{1}}}}^{\ast}\left(  p_{l_{1}}\right)  +l_{1}%
-1\mathrm{\ if\ }p_{1}\notin f^{-1}(E_{q}).
\end{align*}

\end{claim}

Now we will apply Claims \ref{CL2} and \ref{CL1} to verify the conclusion (i).
There is no doubt that $A(\Sigma_{1}^{\prime})+A(\Sigma_{2}^{\prime}%
)=A(\Sigma)$ and $L(\Sigma_{1}^{\prime})+L(\Sigma_{2}^{\prime})=L\left(
\Sigma\right)  .$ We can deduce from the previous constructions that
\[
\overline{n}\left(  \Sigma\right)  =\overline{n}\left(  \Sigma_{1}^{\prime
}\right)  +\overline{n}\left(  \Sigma_{2}^{\prime}\right)  +\left(
l_{1}-2\right)  \chi_{E_{q}}\left(  f\left(  p_{1}\right)  \right)  ,
\]
where $\chi_{E_{q}}\left(  f\left(  p_{1}\right)  \right)  =1$ if $p_{1}\in
f^{-1}(E_{q})$ and $\chi_{E_{q}}\left(  f\left(  p_{1}\right)  \right)  =0$
otherwise. Then we have%
\[
R(\Sigma_{1}^{\prime})+R\left(  \Sigma_{2}^{\prime}\right)  =R(\Sigma
)+4\pi\left(  l_{1}-2\right)  \chi_{E_{q}}\left(  f\left(  p_{1}\right)
\right)  .
\]
Take $\Sigma_{1}=\Sigma_{1}^{\prime}$ or $\Sigma_{2}^{\prime}$ such that
$H(\Sigma_{1})=\max\left\{  H\left(  \Sigma_{1}^{\prime}\right)  ,H\left(
\Sigma_{2}^{\prime}\right)  \right\}  .$ Then we have%
\[
H(\Sigma_{1})\geq H(\Sigma)+\frac{4\pi\left(  l_{1}-2\right)  \chi_{E_{q}%
}\left(  f\left(  p_{1}\right)  \right)  }{L\left(  \partial\Sigma\right)  }.
\]
By the restriction of inequality (\ref{210615}), however, we can obtain the
contradiction $H\left(  \Sigma_{1}\right)  >H_{L}$ when $l_{1}>2$ and
$p_{1}\in f^{-1}(E_{q}).$ Then in the sequel we assume that

\begin{condition}
\label{Condition}$l_{1}=2$ or $f\left(  p_{1}\right)  \notin E_{q}.$
\end{condition}

If $\Sigma_{1}=\Sigma_{2}^{\prime},$ then by Claim \ref{CL2}, $\Sigma_{1}$
satisfies (i). Thus in the sequel, we assume that
\[
\Sigma_{1}=\left(  f_{1},\Delta\right)  =\Sigma_{1}^{\prime}=\left(
f_{1}^{\prime},\overline{\Delta}\right)  ,
\]
say, $f_{1}=f_{1}^{\prime}.$ Then by condition $f\left(  p_{1}\right)  \notin
E_{q}$ it is trivial that%
\[
\#\left(  \partial\Delta\right)  \cap f_{1}^{-1}\left(  E_{q}\right)
=\#\gamma_{1}\cap f^{-1}\left(  E_{q}\right)  \leq\#\left(  \partial
\Delta\right)  \cap f^{-1}\left(  E_{q}\right)  ,
\]
and
\begin{equation}
\#C_{f_{1}}^{\ast}\left(  \partial\Delta\right)  =\#C_{f_{1}}^{\ast}\left(
\left(  \partial\Delta\right)  \backslash\{p_{1}^{\prime}\}\right)
+\#C_{f_{1}}^{\ast}\left(  p_{1}^{\prime}\right)  =\#C_{f}^{\ast}\left(
\gamma_{1}^{\circ}\right)  +\#C_{f_{1}}^{\ast}\left(  p_{1}^{\prime}\right)  .
\label{===}%
\end{equation}
Thus, by the relations $\gamma_{2}=\left(  \partial\Delta\right)
\backslash\gamma_{1}^{\circ}$ and $\gamma_{2}\supset\{p_{0},p_{1},p_{l_{1}%
}\},$ we have
\begin{align*}
\#C_{f}^{\ast}\left(  \partial\Delta\right)  -\#C_{f_{1}}^{\ast}\left(
\partial\Delta\right)   &  =\#C_{f}^{\ast}\left(  \partial\Delta\right)
-\#C_{f}^{\ast}\left(  \gamma_{1}^{\circ}\right)  -\#C_{f_{1}}^{\ast}\left(
p_{1}^{\prime}\right) \\
&  =\#C_{f}^{\ast}\left(  \gamma_{2}\right)  -\#C_{f_{1}}^{\ast}\left(
p_{1}^{\prime}\right) \\
&  \geq\#C_{f}^{\ast}\left(  p_{0}\right)  +\#C_{f}^{\ast}\left(
p_{1}\right)  +\#C_{f}^{\ast}\left(  p_{l_{1}}\right)  -\#C_{f_{1}}^{\ast
}\left(  p_{1}^{\prime}\right) \\
&  =1+\#C_{f}^{\ast}\left(  p_{1}\right)  +\#C_{f}^{\ast}\left(  p_{l_{1}%
}\right)  -\#C_{f_{1}}^{\ast}\left(  p_{1}^{\prime}\right) \\
&  \geq1+0+0-\#C_{f_{1}}^{\ast}\left(  p_{1}^{\prime}\right)  \geq0.
\end{align*}
Therefore, (\ref{as2-10}) holds, equality holding only if
\begin{equation}
\#C_{f}^{\ast}\left(  p_{1}\right)  =\#C_{f}^{\ast}\left(  p_{l_{1}}\right)
=0, \label{=0}%
\end{equation}
and%
\begin{equation}
\#C_{f}^{\ast}\left(  \gamma_{2}\right)  =\#C_{f}^{\ast}\left(  p_{0}\right)
=\#C_{f_{1}}^{\ast}\left(  p_{1}^{\prime}\right)  =1, \label{==1}%
\end{equation}
which implies
\begin{equation}
f_{1}(p_{1}^{\prime})=f\left(  \left\{  p_{l}\right\}  _{l=1}^{v}\right)
=b_{1}\notin E_{q}. \label{outEq}%
\end{equation}

Assume that the equality in (\ref{as2-10}) holds. Then (\ref{=0}%
)--(\ref{outEq}) hold and imply%
\begin{equation}
B_{f}^{\ast}\left(  p_{1}\right)  =B_{f}^{\ast}\left(  p_{l_{1}}\right)
=0\mathrm{\ but\ }B_{f_{1}}^{\ast}\left(  p_{1}^{\prime}\right)  \geq1.
\label{=0>0}%
\end{equation}
By (\ref{===}), (\ref{=0}) and (\ref{==1}), considering that $\partial
\Delta=\gamma_{1}+\gamma_{2}$ we have
\[
B_{f}^{\ast}\left(  \partial\Delta\right)  =B_{f}^{\ast}\left(  \gamma
_{1}^{\circ}\right)  +B_{f}^{\ast}\left(  \gamma_{2}\right)  =B_{f}^{\ast
}\left(  \gamma_{1}^{\circ}\right)  +B_{f}^{\ast}\left(  p_{0}\right)  ,
\]%
\[
B_{f_{1}}^{\ast}\left(  \partial\Delta\right)  =B_{f}^{\ast}\left(  \gamma
_{1}^{\circ}\right)  +B_{f_{1}}^{\ast}\left(  p_{1}^{\prime}\right)  ,
\]
and%
\[
B_{f_{2}^{\prime}}^{\ast}\left(  \partial\Delta\right)  =B_{f|_{\overline
{\Delta_{2}}}}^{\ast}\left(  p_{0}\right)  ;
\]
and then
\[
B_{f}^{\ast}\left(  \partial\Delta\right)  -B_{f_{1}}^{\ast}\left(
\partial\Delta\right)  =B_{f}^{\ast}\left(  p_{0}\right)  -B_{f_{1}}^{\ast
}\left(  p_{1}^{\prime}\right)  .
\]
Thus, by Claim \ref{CL1}, (\ref{=0>0}), and the assumption $v_{f}^{\ast
}\left(  p_{0}\right)  =v,$ we have%
\begin{align*}
&  B_{f}^{\ast}\left(  \partial\Delta\right)  -B_{f_{1}}^{\ast}\left(
\partial\Delta\right) \\
&  \geq B_{f}^{\ast}\left(  p_{0}\right)  -B_{f}^{\ast}\left(  p_{1}\right)
-B_{f|_{\overline{\Delta_{1}}}}^{\ast}\left(  p_{l_{1}}\right)  -l_{1}+1\\
&  =v-1-0-0-l_{1}+1=v-l_{1},
\end{align*}
and then by (\ref{=0>0}) and the assumption $v_{f}^{\ast}\left(  p_{0}\right)
=v$ we have
\begin{equation}
B_{f_{1}}^{\ast}\left(  \partial\Delta\right)  \leq B_{f}^{\ast}\left(
\partial\Delta\right)  -v+l_{1}\leq B_{f}^{\ast}\left(  \partial\Delta\right)
, \label{noeq}%
\end{equation}
with equality only if $l_{1}=v.$

Now we assume the equality in (\ref{as2-10}) holds while (\ref{as2-11}) does
not hold, which implies $l_{1}=v$ by (\ref{noeq}).

\label{zzzzzzzzzzzzzzzzzzzzzzzzzzzzzzzzzzzzzzzzz}Since $f$ is injective on
$\alpha_{1}\left(  a_{1},a_{2}\right)  \backslash\{a_{1}\},p_{1}\in\alpha
_{1}\left(  a_{1},a_{2}\right)  \backslash\{a_{1}\},$ $p_{1}\neq p_{l_{1}}$ in
Case (5) and $f(p_{1})=f\left(  p_{l_{1}}\right)  ,$ we have $p_{l_{1}}%
\notin\alpha_{1}\left(  a_{1},a_{2}\right)  \backslash\{a_{1}\},$ which
implies $a_{2}\in\gamma_{1}\ $and $p_{l_{1}}\notin\beta_{1}.$ Thus
$a_{1}\notin\gamma_{1}^{\circ}$ and $a_{1}\in\gamma_{2}=\left(  \partial
\Delta\right)  \backslash\gamma_{1}^{\circ},$ say, $\#\left[  \gamma_{2}%
\cap\{a_{j}\}_{j=1}^{m}\right]  \geq1,$ with equality if and only if
$\gamma_{2}\cap\{a_{j}\}_{j=1}^{m}=\left\{  a_{1}\right\}  .$

\begin{discussion}
\label{dis1}(a) If $\gamma_{2}$ contains two points of $\left\{
a_{j}\right\}  _{j=1}^{m},$ then $\Sigma_{1}=\Sigma_{1}^{\prime}\in
\mathcal{C}^{\ast}\left(  L,m-1\right)  .$ In fact in this case, (\ref{a6})
contains at most $k\leq m-1$ terms, and thus by Claim \ref{CL1} (\ref{a6})
$\Sigma_{1}\in\mathcal{F}(L,k)\subset\mathcal{F}(L,m-1).$

(b) If $\gamma_{2}$ contains only one point of $\left\{  a_{j}\right\}
_{j=1}^{m},$ say, $\gamma_{2}\cap\{a_{j}\}_{j=1}^{m}=\left\{  a_{1}\right\}
,$ then $p_{l_{1}}\in\alpha_{m}\left(  a_{m},a_{1}\right)  \backslash
\{a_{m}\}$ and then either $\left(  f,\gamma_{2}\right)  $ is a simple closed
arc of $\partial\Sigma_{1}$, or it is a folded arc, say,
\[
\left(  f,\gamma_{2}\right)  =c_{m}\left(  f(p_{l_{1}}),f(a_{1})\right)
+c_{1}\left(  f(a_{1}),f\left(  p_{l_{1}}\right)  \right)  =c_{m}\left(
f(p_{l_{1}}),f(a_{1})\right)  -c_{m}\left(  f\left(  p_{l_{1}}\right)
,f(a_{1})\right)  .
\]
Hence either
\[
\mathfrak{L}\left(  \partial\Sigma_{1}\right)  \leq\mathfrak{L}\left(
\partial\Sigma\right)  -1
\]
by Lemma \ref{L-1}; or $\left(  f,\overline{\Delta_{2}}\right)  =S$ by Lemma
\ref{cut-3} (i), and thus
\[
A(\Sigma_{1})\leq A(\Sigma)-4\pi.
\]

\end{discussion}

Summarizing (\ref{noeq}) and Discussion \ref{dis1}, we can derive that the
equality in (\ref{as2-10}) holds only if at least one of (\ref{as2-11}%
)-(\ref{23-3}) holds. Then (i) holds in Case (5), and we have finished the proof.
\end{proof}

\begin{remark}
\label{movement}When (ii) holds, say, in Case (4), $f_{1}$ plays the role that
moves the branch property of $p_{0}$ to $p_{1},$ so that $H\left(
\Sigma\right)  ,R(\Sigma),\partial\Sigma,\overline{n}\left(  \Sigma\right)  $
and the branch property of all other points, say, points in $\left(
\partial\Delta\right)  \backslash\{p_{0},p_{1}\},$ remain unchanged, while
$p_{0}$ becomes a regular point and $p_{1}$ becomes a branch point with
$v_{f_{1}}\left(  p_{1}\right)  =v_{f}\left(  p_{1}\right)  +v_{f}\left(
p_{0}\right)  -1\ $(note that the interior $\alpha_{1}\left(  p_{0}%
,p_{1}\right)  ^{\circ}$ of $\alpha_{1}\left(  p_{0},p_{1}\right)  $ contains
no branch point of $f$ and contains no point of $f^{-1}(E_{q})$). Such
movement fails in Case (5), and in this case, (i) holds.
\end{remark}

\begin{corollary}
\label{move-together}Let $\Sigma=\left(  f,\overline{\Delta}\right)  \ $be a
surface in $\mathcal{C}^{\ast}(L,m)\ $with the $\mathcal{C}^{\ast}\left(
L,m\right)  $-partitions (\ref{a1}) and (\ref{a2}). Assume that condition (A)
of Lemma \ref{move-b-1} holds, say $C_{f}^{\ast}\left(  \Delta\right)
=\emptyset,$ and assume (\ref{210615}) holds. Write
\[
\mathcal{E}_{f}:=C_{f}^{\ast}\left(  \partial\Delta\right)  \cup\left(
\partial\Delta\cap f^{-1}(E_{q})\right)  =\left\{  p_{0}^{\prime}%
,p_{1}^{\prime},\dots,p_{s-1}^{\prime}\right\}  ,
\]
and assume $p_{0}^{\prime}\in C_{f}^{\ast}\left(  \partial\Delta\right)  ,$
$s\geq2$ and $p_{0}^{\prime},\dots,p_{s-1}^{\prime}$ are arranged on
$\partial\Delta$ anticlockwise. Then there exists a surface $\Sigma
_{1}=\left(  f_{1},\overline{\Delta}\right)  \in\mathcal{C}^{\ast}\left(
L,m\right)  $ such that $C_{f_{1}}^{\ast}\left(  \Delta\right)  =\emptyset$
and one of the followings holds.

(a) The conclusion (i) of Lemma \ref{move-b-1} holds. Thus $L(\partial
\Sigma_{1})<L\left(  \partial\Sigma\right)  ,$ $\#\mathcal{E}_{f_{1}}%
\leq\#\mathcal{E}_{f}$ and either $\#C_{f_{1}}^{\ast}\left(  \partial
\Delta\right)  \leq\#C_{f}^{\ast}\left(  \partial\Delta\right)  -1,$ or
$\#C_{f_{1}}^{\ast}\left(  \partial\Delta\right)  =\#C_{f}^{\ast}\left(
\partial\Delta\right)  \ $and one of (\ref{as2-11})--(\ref{23-3}) holds.

(b) $p_{1}^{\prime}\in C_{f}^{\ast}\left(  \partial\Delta\right)  ,$ $H\left(
\Sigma_{1}\right)  =H(\Sigma),$ $\partial\Sigma_{1}=\partial\Sigma$,
$\#\mathcal{E}_{f_{1}}=\left\{  p_{1}^{\prime},\dots,p_{s-1}^{\prime}\right\}
=\#\mathcal{E}_{f}-1,$ and $B_{f_{1}}^{\ast}\left(  \partial\Delta\right)
=B_{f}^{\ast}\left(  \partial\Delta\right)  .$
\end{corollary}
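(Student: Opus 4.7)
The plan is to apply Lemma \ref{move-b-1} with the prescribed non-special boundary branch point $p_0'$ playing the role of $p_0$, and the next boundary singularity $p_1'$ (or an intervening partition endpoint) playing the role of $p_1$; the two alternatives (i) and (ii) of the lemma will then correspond, respectively, to cases (a) and (b) of the corollary, possibly after iteration.

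After cyclically relabeling the $\mathcal{C}^\ast(L,m)$-partitions so that $p_0' \in \alpha_1(a_1,a_2) \setminus \{a_2\}$, set $p_0 = p_0'$ with $v = v_f(p_0') \geq 2$. Choose $p_1 = p_1'$ when $p_1' \in \alpha_1(p_0', a_2)$, and $p_1 = a_2$ otherwise. The open arc $\alpha_1^\circ(p_0', p_1)$ meets neither $C_f$ nor $f^{-1}(E_q)$, by the definition of $p_1$ and the hypothesis $C_f^\ast(\Delta) = \emptyset$, so conditions (A) and (B) of Lemma \ref{move-b-1} are satisfied; the required inequality $f(p_0) \neq f(p_1)$ follows from the injectivity of $f$ on $\alpha_1 \setminus \{a_2\}$. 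For condition (C), the branched structure at $p_0'$ described in Remark \ref{notation} provides $v$ local sectorial $f$-lifts of $c_1(b_0,b_1)$; since $C_f^\ast(\Delta) = \emptyset$, Lemma \ref{Ri} permits each of the $v-1$ additional lifts to be extended without interior obstruction until it terminates on $\partial\Delta$, producing the prescribed $\beta_l$.

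Lemma \ref{move-b-1} then yields $\Sigma_1 = (f_1,\overline{\Delta}) \in \mathcal{C}^\ast(L,m)$ with $C_{f_1}^\ast(\Delta) = \emptyset$. If alternative (i) occurs, the inequalities on $L(\partial\Sigma_1)$, $\#C_{f_1}^\ast(\partial\Delta)$, and $\#(\partial\Delta \cap f_1^{-1}(E_q))$ combine to give $\#\mathcal{E}_{f_1} \leq \#\mathcal{E}_f$ together with the dichotomy (\ref{as2-11})--(\ref{23-3}), which is precisely case (a) of the corollary. If alternative (ii) occurs, then (ii) forces $p_1 \notin f^{-1}(E_q)$; together with $p_1 \in \mathcal{E}_f$ this gives $p_1 = p_1' \in C_f^\ast(\partial\Delta)$. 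The identity (\ref{=vv-1}) shows $p_1'$ remains a non-special branch point of $f_1$ (now of order $v_f(p_1') + v - 1$), while $p_0'$ becomes regular, every other element of $\mathcal{E}_f$ being unaffected by (\ref{b=b}). Thus $\mathcal{E}_{f_1} = \mathcal{E}_f \setminus \{p_0'\}$, delivering case (b).

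The main technical hurdle is the side-case $p_1' \notin \alpha_1(p_0', a_2)$, in which the choice $p_1 = a_2$ is a regular point off $f^{-1}(E_q)$, and alternative (ii) of Lemma \ref{move-b-1} would merely shift the branch from $p_0'$ to $a_2$, creating a new non-special boundary branch point without reducing $\#\mathcal{E}_f$. This is resolved by iteration: each successful (ii)-step with a regular target moves the branch one partition arc closer to $p_1'$ without changing $\#\mathcal{E}_f$, while each (i)-step strictly decreases one of $L(\partial\Sigma)$, $m$, $\mathfrak{L}(\partial\Sigma)$, or $A(\Sigma)$ by a controlled amount. Since each of these quantities is bounded below, and the number of partition arcs separating the moving branch from $p_1'$ decreases at every (ii)-step, the iteration terminates in finitely many steps, and the final outcome falls into case (a) or case (b).
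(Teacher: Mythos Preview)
Your overall strategy matches the paper's: apply Lemma \ref{move-b-1} with $p_0=p_0'$, obtain either alternative (i) (giving (a)) or alternative (ii) (giving (b), or advancing the branch point one partition arc toward $p_1'$ and iterating). However, your verification of condition (C) has a genuine gap. You assert that Lemma \ref{Ri} extends each of the $v-1$ interior lifts ``until it terminates on $\partial\Delta$, producing the prescribed $\beta_l$,'' but Lemma \ref{Ri} only guarantees that a lift either reaches $\partial\Delta$ \emph{or} covers the whole target arc $c_1(b_0,b_1)$. If some lift $\beta_l$ hits $\partial\Delta$ at a point whose image lies strictly inside $c_1(b_0,b_1)$, then the $\beta_l$ are lifts of \emph{different} subarcs of $c_1$, and (C) fails for your chosen $p_1$; Lemma \ref{move-b-1} does not apply. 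The paper closes this gap by instead taking $\beta_1=\alpha_1(p_0',p_1)$ to be the \emph{maximal} subarc of $\alpha_1(p_0',a_2)$ (respectively of $\beta_1'(p_0',p_1')$) for which (B) and (C) both hold. Maximality then forces a dichotomy: either some lift ends on $\partial\Delta$ exactly at the maximal length (Case 5 of the lemma, hence alternative (i)), or all interior lifts remain in $\Delta$ (Case 4), in which case $C_f^\ast(\Delta)=\emptyset$ and Lemma \ref{Ri} imply the lifts could be extended further, contradicting maximality unless $p_1$ already equals $a_2$ (or $p_1'$).

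There is also a corner case you skip: your claim that $f(p_0)\neq f(p_1)$ ``follows from the injectivity of $f$ on $\alpha_1\setminus\{a_2\}$'' fails when $p_0'=a_1$, $p_1=a_2$, and $c_1$ is a full circle (so $q_1=q_2$). The paper handles this via a preliminary short move (Argument \ref{agr1}): apply the lemma first with an arbitrary proper subarc to shift the branch point off $a_1$, then proceed. Both issues are repaired by the paper's device of working with the maximal admissible subarc rather than fixing $p_1$ in advance.
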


\begin{proof}
Let $p_{0}^{\prime}\in C_{f}^{\ast}\left(  \partial\Delta\right)  $ and
$p_{0}^{\prime},p_{1}^{\prime},\dots,p_{s-1}^{\prime},s\geq2,$ be all points
of $\mathcal{E}_{f}$ arranged anticlockwise on $\partial\Delta$. Then
$\mathcal{E}_{f}$ gives a partition of $\partial\Delta$ as
\[
\partial\Delta=\beta_{1}^{\prime}\left(  p_{0}^{\prime},p_{1}^{\prime}\right)
+\beta_{2}^{\prime}\left(  p_{1}^{\prime},p_{2}^{\prime}\right)  +\dots
+\beta_{s}^{\prime}\left(  p_{s-1}^{\prime},p_{0}^{\prime}\right)  .
\]
Without loss of generality, we assume that $p_{0}^{\prime}\in\alpha_{1}\left(
a_{1},a_{2}\right)  \backslash\{a_{2}\}$ is the first point of $C_{f}^{\ast
}\left(  \partial\Delta\right)  $ in $\alpha_{1}\left(  a_{1},a_{2}\right)  ,$
say,
\[
C_{f}^{\ast}\left(  \partial\Delta\right)  \cap\alpha_{1}\left(  a_{1}%
,p_{0}^{\prime}\right)  =\{p_{0}^{\prime}\}.
\]

Firstly, we consider the simple case that
\begin{equation}
\beta_{1}^{\prime}\left(  p_{0}^{\prime},p_{1}^{\prime}\right)  \subset
\alpha_{1}\left(  p_{0}^{\prime},a_{2}\right)  . \label{as2-3}%
\end{equation}
We may further assume $f\left(  p_{0}^{\prime}\right)  \neq f(p_{1}^{\prime
}).$ Otherwise, we must have that $p_{0}^{\prime}=a_{1},p_{1}^{\prime}=a_{2}$
and that $c_{1}=c_{1}\left(  q_{1},q_{2}\right)  =\left(  f,\beta_{1}^{\prime
}\left(  p_{0}^{\prime},p_{1}^{\prime}\right)  \right)  =\left(  f,\alpha
_{1}\right)  $ is a circle with $\alpha_{1}\cap f^{-1}(E_{q})=\emptyset$, and
then we can discuss based on the following argument.

\begin{Argument}
\label{agr1}Consider a proper subarc $\beta_{1}=\beta_{1}\left(  p_{0}%
^{\prime},p_{01}^{\prime}\right)  \ $of $\beta_{1}^{\prime}=\beta_{1}^{\prime
}\left(  p_{0}^{\prime},p_{1}^{\prime}\right)  $, with $f(p_{01}^{\prime})\neq
f(p_{0}^{\prime})$ and $p_{01}^{\prime}\notin f^{-1}(E_{q}),$ so that
$f\left(  \beta_{1}\right)  $ has other $v-1$ $f$-lifts $\beta_{2}\left(
p_{0}^{\prime},p_{02}^{\prime}\right)  ,\dots,\beta_{v}\left(  p_{0}^{\prime
},p_{0v}^{\prime}\right)  $ so that $p_{0}^{\prime},\left\{  p_{0l}^{\prime
}\right\}  _{l=1}^{v}\ $and $\{\beta_{l}\}_{l=1}^{v}$ satisfy all conditions
of Lemma \ref{move-b-1} and the condition of Case 4 in the proof of Lemma
\ref{move-b-1}. Then by Lemma \ref{move-b-1} there exists a surface
$\Sigma_{1}^{\prime}=\left(  f_{1}^{\prime},\overline{\Delta}\right)
\in\mathcal{C}^{\ast}\left(  L,m\right)  $ so that $C_{f_{1}}^{\ast}\left(
\Delta\right)  =\emptyset\ $and Lemma \ref{move-b-1} (ii) holds, say,
$\partial\Sigma_{1}^{\prime}=\partial\Sigma,H(\Sigma_{1}^{\prime})=H(\Sigma)$,
$p_{01}^{\prime}\in C_{f_{1}^{\prime}}^{\ast},\mathcal{E}_{f_{1}^{\prime}%
}=\left\{  p_{01}^{\prime},p_{1}^{\prime},\dots,p_{s-1}^{\prime}\right\}
\ $and $B_{f_{1}^{\prime}}^{\ast}\left(  \partial\Delta\right)  =B_{f}^{\ast
}\left(  \partial\Delta\right)  ,$ and moreover, (\ref{a1}) and (\ref{a2}) are
still $\mathcal{C}^{\ast}\left(  L,m\right)  $-partitions of $\partial
\Sigma_{1}^{\prime}.$ Then we can replace $\Sigma$ with $\Sigma_{1}^{\prime}$
to continue our proof under (\ref{as2-3}).
\end{Argument}

Now we may assume $f\left(  p_{0}^{\prime}\right)  \neq f(p_{1}^{\prime})$ and
forget Argument \ref{agr1}. Let $\beta_{1}=\beta_{01}^{\prime}\left(
p_{0}^{\prime},p_{01}^{\prime}\right)  $ be the longest subarc of $\beta
_{1}^{\prime}\left(  p_{0}^{\prime},p_{1}^{\prime}\right)  $ such that (B) and
(C) of Lemma \ref{move-b-1} are satisfied by $\beta_{1}.$ There is nothing to
show when conclusion (i) of Lemma \ref{move-b-1} holds.

Assume conclusion (ii) of Lemma \ref{move-b-1} holds for $\beta_{1}$. Then
only Case (4) ocurs and $p_{01}^{\prime}\notin f^{-1}(E_{q})$. If
$p_{01}^{\prime}\neq p_{1}^{\prime},$ then we can extend $\beta_{1}$ longer so
that it still is a subarc of $\beta_{1}^{\prime}\left(  p_{0}^{\prime}%
,p_{1}^{\prime}\right)  \subset\alpha_{1}\left(  p_{0}^{\prime},a_{2}\right)
$ and satisfies (B) and (C), which contradicts definition of $\beta_{1}$.
Then, $p_{01}^{\prime}=p_{1}^{\prime}\in C_{f}^{\ast}\left(  \partial
\Delta\right)  ,$ and by Lemma \ref{move-b-1} (ii) we have

(c) There exists a surface $\Sigma_{1}=\left(  f_{1},\overline{\Delta}\right)
\in\mathcal{C}^{\ast}\left(  L,m\right)  $ such that $C_{f_{1}}^{\ast}\left(
\Delta\right)  =\emptyset$ and (b) holds, and moreover (\ref{a1}) and
(\ref{a2}) are still $\mathcal{C}^{\ast}\left(  L,m\right)  $-partitions of
$\partial\Sigma_{1}.$

\begin{remark}
\label{EfNoEq}The corollary is proved under the consition (\ref{as2-3}). When
$p_{1}^{\prime}\notin C_{f}^{\ast}\left(  \partial\Delta\right)  ,$ we have
$p_{1}^{\prime}\in f^{-1}(E_{q}),$ and then only (a) holds.
\end{remark}

Next, we show what will happen if (\ref{as2-3}) fails. Then $a_{2}\in\beta
_{1}^{\prime\circ}$ and so $a_{2}\notin\mathcal{E}_{f}.$ Assume that
\[
p_{1}^{\prime}\in\alpha_{j_{0}}\left(  a_{j_{0}},a_{j_{0}+1}\right)
\backslash\{a_{j_{0}}\},\label{as2-3+1}%
\]
for some $j_{0}>1$. Then we can find a point $p_{1}\in\alpha_{1}\left(
p_{0}^{\prime},a_{2}\right)  $ so that $\beta_{1}=\alpha_{1}\left(
p_{0}^{\prime},p_{1}\right)  $ is a maximal subarc of $\alpha_{1}\left(
p_{0},a_{2}\right)  $ satisfying conditions (B) and (C) of Lemma
\ref{move-b-1}. Then $p_{1}\notin\mathcal{E}_{f}$ and according to the above
proof only Case 4 or Case 5 occurs. If Case 5 occurs, then the proof for Case
(5) deduces the conclusion (i) of Lemma \ref{move-b-1}, and so does (a). If
Case 4 occurs, then by the condition $C_{f}^{\ast}\left(  \Delta\right)
=\emptyset,$ the maximal property of $\beta_{1}$ and Lemma \ref{Ri}, we have
$p_{1}=a_{2}$. Then the proof of Case 4 again deduces that (ii) in Lemma
\ref{move-b-1} holds, and we obtain a surface $\Sigma_{1}=\left(
f_{1},\overline{\Delta}\right)  \in\mathcal{C}^{\ast}\left(  L,m\right)  $
such that $H(\Sigma_{1})=H(\Sigma),$ $\partial\Sigma_{1}=\partial\Sigma,$
$C_{f_{1}}^{\ast}\left(  \Delta\right)  =\emptyset,$ $p_{1}\notin f_{1}%
^{-1}(E_{q}),$ $B_{f_{1}}^{\ast}\left(  \partial\Delta\right)  =B_{f}^{\ast
}\left(  \partial\Delta\right)  $ and
\[
\mathcal{E}_{f_{1}}=\{p_{1},p_{1}^{\prime},\dots,p_{s-1}^{\prime}%
\}=\{a_{2},p_{1}^{\prime},\dots,p_{s-1}^{\prime}\}.
\]
Thus using Lemma \ref{move-b-1} repeatedly, we can either prove (a) holds, or
obtain a surface $\Sigma_{j_{0}}=\left(  f_{j_{0}},\overline{\Delta}\right)  $
such that $H(\Sigma_{j_{0}})=H(\Sigma),$ $\partial\Sigma_{j_{0}}%
=\partial\Sigma,$ $C_{f_{j_{0}}}^{\ast}\left(  \Delta\right)  =\emptyset,$
$a_{j_{0}}\notin f_{j_{0}}^{-1}(E_{q}),$ $B_{f_{j_{0}}}^{\ast}\left(
\partial\Delta\right)  =B_{f}^{\ast}\left(  \partial\Delta\right)  $ and
\[
\mathcal{E}_{f_{j_{0}}}=\{a_{j_{0}},p_{1}^{\prime},\dots,p_{s-1}^{\prime
}\}\mathrm{\ and\ }B_{f_{j_{0}}}^{\ast}\left(  \partial\Delta\right)
=B_{f}^{\ast}\left(  \partial\Delta\right)  .
\]
Note that $a_{j_{0}}\ $and $p_{1}^{\prime}$ are both contained in the same arc
$\alpha_{j_{0}}.$ Then we can go back to condition (\ref{as2-3}) to show that
either (a) or (b) holds, and moreover, by Remark \ref{EfNoEq}, (b) holds only
if $p_{1}^{\prime}\in C_{f_{j_{0}}}^{\ast}\left(  \partial\Delta\right)  ,$
which implies $p_{1}^{\prime}\not \in f_{j_{0}}^{-1}\left(  E_{q}\right)  .$
\end{proof}

\begin{corollary}
\label{move-together-1}Let $\Sigma_{0}=\left(  f_{0},\overline{\Delta}\right)
\ $be a surface in $\mathcal{C}^{\ast}(L,m)\ $with the $\mathcal{C}^{\ast
}\left(  L,m\right)  $-partitions (\ref{a1}) and (\ref{a2}). Assume that
condition (A) of Lemma \ref{move-b-1} holds, say $C_{f_{0}}^{\ast}\left(
\Delta\right)  =\emptyset,$ and that (\ref{210615}) holds. Then there exists a
surface $\Sigma_{1}=\left(  f_{1},\overline{\Delta}\right)  \in\mathcal{C}%
^{\ast}\left(  L,m\right)  ,$ such that $C_{f_{1}}^{\ast}\left(
\Delta\right)  =\emptyset,$%
\[
H\left(  \Sigma_{1}\right)  \geq H(\Sigma_{0}),L(\partial\Sigma_{1})\leq
L(\partial\Sigma_{0}),
\]
that $H\left(  \Sigma_{1}\right)  >H(\Sigma_{0})\ $implies $L(\partial
\Sigma_{1})<L(\partial\Sigma_{0}).$ Moreover one of the following conclusions
(I)--(II) holds.

(I) $\Sigma_{1}\in\mathcal{F}_{r}\left(  L,m\right)  $ and, in this case,
$L(\partial\Sigma_{1})<L(\partial\Sigma_{0})$ if and only if $C_{f_{0}}^{\ast
}\left(  \partial\Delta\right)  \neq\emptyset.$

(II) $\Sigma_{1}\in\mathcal{C}^{\ast}\left(  L,m\right)  ,$ and both
$\mathcal{E}_{f_{1}}$ and $C_{f_{1}}^{\ast}\left(  \partial\Delta\right)  $
are the same singleton$,$ say, $\mathcal{E}_{f_{1}}=C_{f_{1}}^{\ast}\left(
\partial\Delta\right)  $ is a singleton outside $f_{1}^{-1}(E_{q}).$ Moreover,
if $\#C_{f_{0}}^{\ast}\left(  \partial\Delta\right)  \neq\emptyset$ and
$f_{0}^{-1}(E_{q})\cap\partial\Delta\neq\emptyset,$ then $L(\partial\Sigma
_{1})<L(\partial\Sigma_{0})$ and either $\#C_{f_{1}}^{\ast}\left(
\partial\Delta\right)  \leq\#C_{f_{0}}^{\ast}\left(  \partial\Delta\right)
-1$ holds, or $\#C_{f_{1}}^{\ast}\left(  \partial\Delta\right)  =\#C_{f_{0}%
}^{\ast}\left(  \partial\Delta\right)  $ and one of (\ref{as2-11}%
)--(\ref{23-3}) hold with $f=f_{0}.$
\end{corollary}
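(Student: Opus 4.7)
The plan is to construct the surface $\Sigma_1$ by iterating Corollary \ref{move-together}. Set $\Sigma^{(0)} = \Sigma_0$ and, having produced $\Sigma^{(j)} = (f^{(j)}, \overline{\Delta}) \in \mathcal{C}^*(L, m)$ with $C^*_{f^{(j)}}(\Delta) = \emptyset$, I stop and set $\Sigma_1 := \Sigma^{(j)}$ if either $C^*_{f^{(j)}}(\partial\Delta) = \emptyset$ (in which case $\Sigma^{(j)} \in \mathcal{F}_r(L, m)$, giving conclusion (I)) or $\#\mathcal{E}_{f^{(j)}} = 1$ while $C^*_{f^{(j)}}(\partial\Delta) \neq \emptyset$ (in which case $\mathcal{E}_{f^{(j)}} = C^*_{f^{(j)}}(\partial\Delta)$ is a singleton outside $f^{(j)-1}(E_q)$, giving conclusion (II)). Otherwise $\#\mathcal{E}_{f^{(j)}} \geq 2$ contains an element of $C^*_{f^{(j)}}(\partial\Delta)$, so Corollary \ref{move-together} applies and produces $\Sigma^{(j+1)}$. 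Inequality (\ref{210615}) is preserved along the iteration because $H$ is non-decreasing and $L$ is non-increasing.

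The main obstacle is to prove termination. Each step of the iteration falls into either case (a) or case (b) of Corollary \ref{move-together}. In case (b), $\#\mathcal{E}_f$ drops by exactly one while $L(\partial\Sigma)$, $\partial\Sigma$, $B_f^*(\partial\Delta)$, and $H(\Sigma)$ are unchanged. In case (a), by the clause from Lemma \ref{move-b-1} (i) that Corollary \ref{move-together}(a) echoes, $L(\partial\Sigma)$ strictly decreases and at least one of the integer-valued quantities $\#C^*_f(\partial\Delta)$, $B_f^*(\partial\Delta)$, $\mathfrak{L}(\partial\Sigma)$, $m$, $\lfloor A(\Sigma)/(4\pi) \rfloor$ strictly decreases. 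I would then show that the lexicographic tuple
\[
T(\Sigma) = \bigl(m,\; \lfloor A(\Sigma)/(4\pi) \rfloor,\; \mathfrak{L}(\partial\Sigma),\; B_f^*(\partial\Delta),\; \#C_f^*(\partial\Delta),\; \#\mathcal{E}_f\bigr)
\]
is weakly decreasing at every step and strictly decreasing in the lexicographic order, so that termination follows from the well-ordering of $\mathbb{N}^6$. The delicate part is verifying that none of the earlier coordinates of $T$ can increase as a side-effect of case (b) or of a case (a) step that reduces a different coordinate; this has to be read off from the constructions in the proof of Lemma \ref{move-b-1}, and it is where I expect most of the careful bookkeeping to live.

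Once termination is established, the remaining clauses follow directly. The bounds $H(\Sigma_1) \geq H(\Sigma_0)$ and $L(\partial\Sigma_1) \leq L(\partial\Sigma_0)$ follow from the monotonicity of $H$ and $L$ under each step. Because case (b) leaves both $H$ and $L$ unchanged while case (a) strictly decreases $L$, any net increase of $H$ must be accompanied by at least one case (a) step, hence by a strict decrease of $L$; this yields $H(\Sigma_1) > H(\Sigma_0) \Rightarrow L(\partial\Sigma_1) < L(\partial\Sigma_0)$. For conclusion (I), if $C^*_{f_0}(\partial\Delta) = \emptyset$ the iteration terminates immediately with $\Sigma_1 = \Sigma_0$ and $L$ is unchanged, while otherwise at least one step has occurred and it suffices to observe that case (b) preserves $C^*(\partial\Delta)$ as a subset of fixed cardinality of boundary branch points (it only redistributes branch data without eliminating the location of a branch point that belongs only to $C^*$), so to reach $C^*(\partial\Delta)=\emptyset$ from a nonempty start a case (a) step must fire, giving $L(\partial\Sigma_1) < L(\partial\Sigma_0)$. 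For conclusion (II) under the additional hypotheses $C^*_{f_0}(\partial\Delta) \neq \emptyset$ and $f_0^{-1}(E_q) \cap \partial\Delta \neq \emptyset$, case (b) cannot by itself delete the $f^{-1}(E_q)$-points from $\mathcal{E}$, so at least one case (a) step must occur on the way to the singleton state $\mathcal{E}_{f_1} = C^*_{f_1}(\partial\Delta)$; this gives the strict $L$ decrease and the stated comparison $\#C_{f_1}^*(\partial\Delta) \leq \#C_{f_0}^*(\partial\Delta)-1$ or the alternative in (\ref{as2-11})–(\ref{23-3}), inherited from the case (a) step that fires.
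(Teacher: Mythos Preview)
Your approach---iterate Corollary \ref{move-together} until either $C^*_{f^{(j)}}(\partial\Delta)=\emptyset$ or $\mathcal{E}_{f^{(j)}}$ collapses to a single $C^*$-point---is exactly what the paper does; the paper is simply less explicit about termination, asserting ``so that $\Sigma_{n_0}$ no longer can be iterated'' without writing down a potential function.

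There is one factual slip in your argument for conclusion~(I). You write that case~(b) ``preserves $C^*(\partial\Delta)$ as a subset of fixed cardinality.'' This is false: in case~(b) of Corollary~\ref{move-together} the point $p_0'\in C^*_f$ becomes \emph{regular} for $f_1$ (by Lemma~\ref{move-b-1}(ii), $v_{f_1}(p_0)=1$), while $p_1'$ was already in $C^*_f$ and stays in $C^*_{f_1}$; so $\#C^*_{f_1}=\#C^*_f-1$. Fortunately your \emph{conclusion} survives: the point is that case~(b) always leaves $p_1'\in C^*_{f_1}$, so $C^*_{f_1}\neq\emptyset$ after any case~(b) step. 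Hence reaching $C^*=\emptyset$ still forces at least one case~(a) step and hence a strict drop in $L$.

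On termination via your lexicographic tuple $T(\Sigma)$: the bookkeeping for case~(b) is clean (each of $m$, $\lfloor A/4\pi\rfloor$, $\mathfrak L(\partial\Sigma)$, $B^*_f(\partial\Delta)$ is literally unchanged since $\partial\Sigma_1=\partial\Sigma$ and $A(\Sigma_1)=A(\Sigma)$, while $\#C^*$ and $\#\mathcal E$ each drop by~$1$). For case~(a) you correctly flag the delicate point. The quantities $m$, $\lfloor A/4\pi\rfloor$, $\#C^*$, $B^*$ can all be checked to be non-increasing directly from Claims~\ref{CL2} and~\ref{CL1} and the inequality $B^*_{f_1'}(\partial\Delta)\le B^*_f(\partial\Delta)-v+l_1\le B^*_f(\partial\Delta)$ in the proof of Lemma~\ref{move-b-1}. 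The one coordinate that genuinely requires an extra argument is $\mathfrak L(\partial\Sigma)$: you need that $\mathfrak L((f,\gamma_j))\le\mathfrak L(\partial\Sigma)$ for $j=1,2$ when $\partial\Sigma=(f,\gamma_1)+(f,\gamma_2)$ is split at a self-intersection. This is plausible (a closed sub-arc of $\Gamma$ should have no more ``rounds'' than $\Gamma$ itself), but it is not stated anywhere and does not fall out of Lemma~\ref{L-1}; you would need to supply this. The paper does not address this point either.

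Finally, for the ``moreover'' clause of~(II) your sketch says the dichotomy is ``inherited from the case~(a) step that fires,'' but that step compares $\Sigma^{(j)}$ to $\Sigma^{(j-1)}$, not to $\Sigma_0$. You need to note that in the terminal state $\#C^*_{f_1}=1$, so if $\#C^*_{f_0}\ge 2$ the first alternative $\#C^*_{f_1}\le\#C^*_{f_0}-1$ holds automatically; only the case $\#C^*_{f_0}=1$ requires tracking one of (\ref{as2-11})--(\ref{23-3}), and there the very first step is already case~(a) (since $p_1'\in f^{-1}(E_q)$ by Remark~\ref{EfNoEq}), after which the relevant quantity is non-increasing along the rest of the iteration.
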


\begin{proof}
We will prove this by induction on $\#C_{f_{0}}^{\ast}\left(  \partial
\Delta\right)  .$ If $C_{f_{0}}^{\ast}\left(  \partial\Delta\right)
=\emptyset,$ then (I) holds for $\Sigma_{1}=\Sigma_{0}$.

If $C_{f_{0}}^{\ast}\left(  \partial\Delta\right)  $ is a singleton and
$\mathcal{E}_{f_{0}}=C_{f_{0}}^{\ast}\left(  \partial\Delta\right)  ,$ then
$f_{0}^{-1}(E_{q})\cap\partial\Delta=\emptyset$ and so (II) holds with
$\Sigma_{1}=\Sigma_{0}$.

Now, assume that $C_{f_{0}}^{\ast}\left(  \partial\Delta\right)  \neq
\emptyset$ and $\#\mathcal{E}_{f_{0}}\geq2$. Then we can write%
\[
\mathcal{E}_{f_{0}}=\left\{  p_{0}^{0},p_{1}^{0},\dots,p_{s_{0}-1}%
^{0}\right\}
\]
so that $p_{j}^{0},j=0,1,\dots,s_{0}-1$, are arranged anticlockwise on
$\partial\Delta\ $and $p_{0}^{0}\in C_{f_{0}}^{\ast}\left(  \partial
\Delta\right)  .$ Then by Corollary \ref{move-together}, there exists a
surface $\Sigma_{1}\in\mathcal{C}^{\ast}(L,m)$ such that the following
conclusion (a)-$(n-1,n)$ or (b)-$(n-1,n)$ holds for $n=1.$

(a)-$(n-1,n)$: $C_{f_{n}}^{\ast}\left(  \Delta\right)  =\emptyset$ and the
conclusion (i) of Lemma \ref{move-b-1} holds, and thus $H(\Sigma_{1})\geq
H\left(  \Sigma_{0}\right)  ,L(\partial\Sigma_{1})<L(\partial\Sigma_{0}),$
$\#\mathcal{E}_{f_{1}}\leq\#\mathcal{E}_{f_{0}}$ and either $\#C_{f_{1}}%
^{\ast}\left(  \partial\Delta\right)  \leq\#C_{f_{0}}^{\ast}\left(
\partial\Delta\right)  -1$ holds or $\#C_{f_{1}}^{\ast}\left(  \partial
\Delta\right)  =\#C_{f_{0}}^{\ast}\left(  \partial\Delta\right)  $ and one of
(\ref{as2-11})--(\ref{23-3}) holds with $f=f_{0}$.

(b)-$(n-1,n)$: $C_{f_{n}}^{\ast}\left(  \Delta\right)  =\emptyset,$
$\mathcal{E}_{f_{n}}=\left\{  p_{1}^{n-1},\dots,p_{s_{n-1}-1}^{n-1}\right\}  $
with $p_{1}^{n-1}\in C_{f_{n-1}}^{\ast}\left(  \partial\Delta\right)  ,$
$H\left(  \Sigma_{n}\right)  =H(\Sigma_{n-1}),$ $\partial\Sigma_{n}%
=\partial\Sigma_{n-1},$ and $B_{f_{n}}^{\ast}\left(  \partial\Delta\right)
=B_{f_{n-1}}^{\ast}\left(  \partial\Delta\right)  .$

If $\Sigma_{1}\in\mathcal{F}_{r}(L,m)$, then (b)-$(0,1)$ does not hold, and
then (a)-$(0,1)$ holds, which imlpies (I). Note that when $\Sigma_{1}%
\in\mathcal{F}_{r}(L,m)$ and $H\left(  \Sigma_{1}\right)  >H(\Sigma_{0})$
hold, we must have $C_{f_{0}}^{\ast}\left(  \partial\Delta\right)
\neq\emptyset$.

Assume $\Sigma_{1}$ is not in $\mathcal{F}_{r}(L,m)$ and $\mathcal{E}_{f_{1}%
}=C_{f_{1}}^{\ast}\left(  \partial\Delta\right)  $ is a singleton. Then
(b)-$\left(  0,1\right)  $ holds and so $C_{f_{1}}^{\ast}\left(
\partial\Delta\right)  =\mathcal{E}_{f_{1}}=\left\{  p_{1}^{0},\dots
,p_{s_{0}-1}^{0}\right\}  =\left\{  p_{1}^{0}\right\}  \in C_{f_{n-1}}^{\ast
}\left(  \partial\Delta\right)  .$ In this case, we must have $f_{0}%
^{-1}(E_{q})\cap\partial\Delta=\emptyset$. Thus (II) holds.

Now, assume that $\Sigma_{1}$ is not in $\mathcal{F}_{r}(L,m)$ but
$\mathcal{E}_{f_{1}}$ contains at least two points. Then $C_{f_{1}}^{\ast
}\left(  \partial\Delta\right)  \neq\emptyset,$ and we can iterate the above
discussion to obtain surfaces $\Sigma_{j}=\left(  f_{j},\overline{\Delta
}\right)  ,j=1,2,\dots,n_{0},$ so that $\Sigma_{n_{0}}$ no longer can be
iterated. Then for each $\Sigma_{n},$ $n=1,\dots,n_{0},$ (a)-$\left(
n-1,n\right)  $ or (b)-$\left(  n-1,n\right)  $ holds, and thus one of the
following holds.

(c) $C_{f_{n_{0}}}^{\ast}\left(  \partial\Delta\right)  $ is empty and
(a)-$\left(  n_{0}-1,n_{0}\right)  $ holds.

(d) $C_{f_{n_{0}}}^{\ast}\left(  \partial\Delta\right)  $ is a singleton and
(a)-$\left(  n_{0}-1,n_{0}\right)  $ holds.

(e) $C_{f_{n_{0}}}^{\ast}\left(  \partial\Delta\right)  $ is a singleton and
(b)-$\left(  n_{0}-1,n_{0}\right)  $ holds.

We show that $\Sigma_{n_{0}}$ is a desired surface. First of all, we have
$\#C_{f_{n_{0}}}^{\ast}\left(  \Delta\right)  =\emptyset.$

If (c) holds, then $\Sigma_{n_{0}}\in\mathcal{F}_{r}(L,m)$ and we obtain (I).

Assume (d) holds. Then we have $L(\partial\Sigma_{n_{0}})<L(\partial
\Sigma_{n_{0}-1})\leq L(\partial\Sigma_{0})$ and $C_{f_{0}}^{\ast}\left(
\partial\Delta\right)  \neq\emptyset.$ Then (II) holds in this case, no matter
$f_{0}^{-1}(E_{q})\cap\partial\Delta$ is empty or not.

Assume (e) holds. If all conditions (b)-$\left(  n-1,n\right)  $ hold for
$n=1,\dots,n_{0},$ then $s_{0}=n_{0}+1$, $\partial\Sigma_{n_{0}}%
=\partial\Sigma_{n_{0}-1}=\dots=\partial\Sigma_{0}$ and $\mathcal{E}_{f_{n}%
}=C_{f_{1}}^{\ast}\left(  \partial\Delta\right)  =\{p_{n}^{0},p_{n+1}%
^{0},\dots,p_{n_{0}}^{0}\}$ for $n=0,1,2,\dots,n_{0},$ say, $f_{0}^{-1}%
(E_{q})\cap\partial\Delta$ is empty. Thus, when $f_{0}^{-1}(E_{q})\cap
\partial\Delta\neq\emptyset,$ (a)-$\left(  n_{0}^{\prime}-1,n_{0}^{\prime
}\right)  $ has to be satisfied for some $n_{0}^{\prime}<n_{0}$. Thus all
conclusion in (II) hold.
\end{proof}

\section{Proof of the main theorem}

Now, we can complete the proof of the main theorem, Theorem \ref{re}.

\begin{definition}
\label{df}Let $\Sigma=\left(  f,\overline{\Delta}\right)  \in\mathcal{F}.$ For
any two points $a$ and $b$ in $\overline{\Delta},$ define their $d_{f}%
$-distance $d_{f}\left(  a,b\right)  $ by%
\[
d_{f}(a,b)=\inf\left\{  L(f,I):I\mathrm{\ is\ a\ path\ in\ }\overline{\Delta
}\mathrm{\ from\ }a\mathrm{\ to\ }b\right\}  .
\]
For any two sets $A$ and $B$ in $\overline{\Delta}$, define their $d_{f}%
$-distance by%
\[
d_{f}\left(  A,B\right)  =\inf\left\{  d_{f}\left(  a,b\right)  :a\in A,b\in
B\right\}  .
\]

\end{definition}

\begin{lemma}
\label{d-f}\label{nobo}Let $\mathcal{L}=\{L^{\prime}>0:H_{L}$ is continuous at
$L\},\ L\in\mathcal{L}$ and let $L_{0}\in(0,L].$ Then there exists a positive
number $\delta_{L_{0}}$ such that
\begin{equation}
d_{f}(\Delta\cap f^{-1}(E_{q}),\partial\Delta)>\delta_{L_{0}} \label{god}%
\end{equation}
holds for all surfaces $\Sigma=\left(  f,\overline{\Delta}\right)  $ in
$\mathcal{F}(L)$ with $L(\partial\Sigma)\geq L_{0}$ and with (\ref{210615}).
\end{lemma}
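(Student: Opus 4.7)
The plan is to argue by contradiction via a single cutting surgery. If $d_f(\Delta\cap f^{-1}(E_q),\partial\Delta)$ is tiny, there is a short arc $\alpha\subset\overline{\Delta}$ from an interior $E_q$-preimage $p$ to a boundary point $p_0$; cutting $\overline{\Delta}$ open along $\alpha$ produces a new Jordan domain $\overline{\Delta'}$ on which $f$ induces a surface $\Sigma'\in\mathcal{F}$ with $R(\Sigma')\geq R(\Sigma)+4\pi$ (because $p$ now lies on the new boundary and no longer contributes to $\overline{n}(\cdot,E_q)$) and $L(\partial\Sigma')=L(\partial\Sigma)+2L(f,\alpha)$. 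For $L(f,\alpha)$ small enough, the hypothesis (\ref{210615}) forces $H(\Sigma')$ to lie well above $H_L$, while the continuity of $H_L$ at $L$ (Remark \ref{RL}) caps $H(\Sigma')$ just above $H_L$, producing the contradiction.

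Concretely, I will fix $\delta_{L_0}>0$ so small that $2\delta_{L_0}<\delta_L$ and $(2H_L+\pi/L)\delta_{L_0}<3\pi$, and assume for contradiction that some $\Sigma\in\mathcal{F}(L)$ satisfying the hypotheses has $d_f<\delta_{L_0}$. By the definition of $d_f$ I pick an arc $\alpha$ from such a point $p\in f^{-1}(E_q)\cap\Delta$ to a boundary point $p_0$ with $L(f,\alpha)<\delta_{L_0}$, and, invoking Theorem \ref{st} and Lemma \ref{cov-1} for local models of $f$, regularize $\alpha$ so that it is simple, $\alpha\setminus\{p_0\}\subset\Delta$, $\alpha^\circ\cap f^{-1}(E_q)=\emptyset$, $p_0\notin f^{-1}(E_q)$, and $(f,\alpha)$ is a short polygonal path built from SCC arcs; the natural choice is to lift, starting from $p$, a short (possibly broken) straight segment on $S$ joining $f(p)$ to a generic nearby point. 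Cutting $\overline{\Delta}$ along $\alpha^\circ$ yields a Jordan domain $\overline{\Delta'}$ on which $\alpha$ splits into two boundary copies $\alpha^{\pm}$ traversed in opposite senses; the induced map $f':\overline{\Delta'}\to S$ is an OPCOFOM and $\Sigma'=(f',\overline{\Delta'})\in\mathcal{F}$ with $A(\Sigma')=A(\Sigma)$, $\overline{n}(\Sigma',E_q)\leq\overline{n}(\Sigma,E_q)-1$, and $L(\partial\Sigma')<L(\partial\Sigma)+2\delta_{L_0}<L+\delta_L$.

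Using (\ref{210615}) in the form $R(\Sigma)>H_L L(\partial\Sigma)-\pi/2$, I then obtain
\[
H(\Sigma')\geq\frac{R(\Sigma)+4\pi}{L(\partial\Sigma)+2\delta_{L_0}}>\frac{H_L L(\partial\Sigma)+7\pi/2}{L(\partial\Sigma)+2\delta_{L_0}},
\]
and the choice of $\delta_{L_0}$ together with $L(\partial\Sigma)\leq L$ makes the final quotient strictly larger than $H_L+\pi/(2L)$. On the other hand $L(\partial\Sigma')<L+\delta_L$ combined with Remark \ref{RL} gives $H(\Sigma')\leq H_{L(\partial\Sigma')}<H_L+\pi/(2L)$, a contradiction. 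The main obstacle is the regularization step: I must exhibit a cut arc $\alpha$ whose $f$-image is genuinely polygonal and short, and which meets $\partial\Delta$ only at $p_0$ while avoiding $f^{-1}(E_q)$ on its interior, so that $\Sigma'$ lies in $\mathcal{F}$ and the bookkeeping of $A$ and $\overline{n}(\cdot,E_q)$ changes exactly as claimed. This is handled by passing through the holomorphic model from Theorem \ref{st} and the local sector model of Lemma \ref{cov-1} near the (possibly branched) point $p$, which allows a straight-line target in $S$ to be $f$-lifted cleanly.
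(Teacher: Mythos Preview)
Your argument is correct and follows essentially the same route as the paper's proof: cut $\overline{\Delta}$ along a short path from an interior $E_q$-preimage to $\partial\Delta$ so that $R$ jumps by at least $4\pi$ while $L$ increases by less than $2\delta_{L_0}$, then combine (\ref{210615}) with the continuity encoded in Remark~\ref{RL} to force $H(\Sigma')>H_L+\pi/(2L)\geq H_{L(\partial\Sigma')}$, a contradiction. Your treatment is in fact more explicit than the paper's sketch---you write down a workable choice of $\delta_{L_0}$ (which, as you may have noticed, depends only on $L$ and not on $L_0$) and you flag the regularization of $\alpha$ as the one step requiring care, whereas the paper simply asserts that the cut can be taken with $(f,I_\varepsilon)$ polygonal.
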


This is proved in \cite{Zh2}. In fact, if this fails, then for any
$\varepsilon>0,$ there exists a surface $\Sigma\in\mathcal{F}(L)$ with
$L(\partial\Sigma)\geq L_{0}$ such that $d_{f}(\Delta\cap f^{-1}%
(E_{q}),\partial\Delta)<\varepsilon/3$ and $\Delta\cap f^{-1}(E_{q}%
)\neq\emptyset.$ Then one can cut $\Sigma$ from a boundary point on
$\partial\Delta$ to a point in $f^{-1}(E_{q})\cap\Delta,$ along a path
$I_{\varepsilon}\subset\overline{\Delta}$ so that $\left(  f,I_{\varepsilon
}\right)  $ is polygonal and that $2L(f,I_{\varepsilon})<\varepsilon,$
obtaining a surface $\Sigma_{\varepsilon}\in\mathcal{F}\left(  L+\varepsilon
\right)  $ with $L(\partial\Sigma_{\varepsilon})=L(\partial\Sigma
)+2L(f,I_{\varepsilon}),$ $A(\Sigma_{\varepsilon})=A(\Sigma)$ and
$\overline{n}\left(  \Sigma_{\varepsilon}\right)  \leq\overline{n}\left(
\Sigma\right)  -1.$ Then we have $R\left(  \Sigma_{\varepsilon}\right)  \geq
R(\Sigma)+4\pi$ and thus
\[
H(\Sigma_{\varepsilon})=\frac{R\left(  \Sigma_{\varepsilon}\right)
}{L(\partial\Sigma_{\varepsilon})}\geq\frac{R(\Sigma)+4\pi}{L(\partial
\Sigma)+\varepsilon}=\frac{R(\Sigma)/L(\partial\Sigma)+4\pi/L(\partial\Sigma
)}{1+\varepsilon/L(\partial\Sigma)}.
\]
This and (\ref{210615}) deduce that $H_{L+\varepsilon}\geq H(\Sigma
_{\varepsilon})>H_{L}+\frac{\pi}{2L(\partial\Sigma)}$ when $\varepsilon$ is
small enough. But this contradicts the assumption $L\in\mathcal{L},$ which
implies that $H_{L+\varepsilon}\rightarrow H_{L}$ as $\varepsilon
\rightarrow0.$

\begin{proof}
[\textbf{Proof of Theorem \ref{re}}]Let $\Sigma=\left(  f,\overline{\Delta
}\right)  \in\mathcal{C}^{\ast}(L,m)$ be a covering surface such that
(\ref{210615}) holds. If $C_{f}^{\ast}=C_{f}^{\ast}\left(  \overline{\Delta
}\right)  =\emptyset,$ then $\Sigma^{\prime}=\Sigma$ itself is the desired
surface in Theorem \ref{re}.

If $C_{f}^{\ast}\left(  \Delta\right)  =\emptyset,$ but $C_{f}^{\ast}\left(
\partial\Delta\right)  \neq\emptyset,$ then by Corollary \ref{move-together-1}%
, either the conclusion of Theorem \ref{re} holds with $L(\partial\Sigma
_{1})<L(\partial\Sigma_{0}),$ or

(III) there exists a surface $\Sigma_{1}=\left(  f_{1},\overline{\Delta
}\right)  \in\mathcal{C}^{\ast}(L,m)$ such that $C_{f_{1}}^{\ast}\left(
\Delta\right)  =\emptyset,$ $H\left(  \Sigma_{1}\right)  \geq H\left(
\Sigma\right)  $, $L\left(  \partial\Sigma_{1}\right)  \leq L(\partial
\Sigma),$ $H\left(  \Sigma_{1}\right)  >H\left(  \Sigma\right)  $ only if
$L\left(  \partial\Sigma_{1}\right)  <L(\partial\Sigma);$ and both
$\mathcal{E}_{f_{1}}$ and $C_{f_{1}}^{\ast}\left(  \partial\Delta\right)  $
are the same singleton. Moreover, if $\#C_{f}^{\ast}\left(  \partial
\Delta\right)  \neq\emptyset$ and $f^{-1}(E_{q})\cap\partial\Delta
\neq\emptyset,$ then $L(\partial\Sigma_{1})<L(\partial\Sigma_{0})$ and either
$\#C_{f_{1}}^{\ast}\left(  \partial\Delta\right)  \leq\#C_{f}^{\ast}\left(
\partial\Delta\right)  -1$ holds, or $\#C_{f_{1}}^{\ast}\left(  \partial
\Delta\right)  =\#C_{f}^{\ast}\left(  \partial\Delta\right)  $ and one of
(\ref{as2-11})--(\ref{23-3}) holds.

Assume $C_{f}^{\ast}\left(  \Delta\right)  \neq\emptyset.$ Then by Corollary
\ref{move-out}, we have

\begin{claim}
There exists a surface $\Sigma_{0}=(f_{0},\overline{\Delta})\in\mathcal{C}%
^{\ast}\left(  L,m\right)  $ such that $C_{f_{0}}^{\ast}\left(  \Delta\right)
=\emptyset$, $H\left(  \Sigma_{0}\right)  \geq H\left(  \Sigma\right)  $ and
$L\left(  \partial\Sigma_{0}\right)  \leq L(\partial\Sigma)$. Moreover,
$L(\partial\Sigma_{0})=L(\partial\Sigma)$ holds if and only if $H(\Sigma
_{0})=H(\Sigma),$ $\partial\Sigma_{0}=\partial\Sigma$ and $B_{f_{0}}^{\ast
}\left(  \partial\Delta\right)  >B_{f}^{\ast}\left(  \partial\Delta\right)  $ hold.
\end{claim}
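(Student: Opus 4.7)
The plan is to deduce the Claim directly from Corollary~\ref{move-out}, which has already done essentially all the work. Since $\Sigma\in\mathcal{C}^{\ast}(L,m)$ and (\ref{210615}) is assumed, the hypotheses of Corollary~\ref{move-out} are satisfied with $f$ in place of its $f$. I will split into the two cases of that corollary.

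First I would handle the trivial case $C_{f}^{\ast}(\Delta)=\emptyset$: simply set $\Sigma_{0}=\Sigma=(f_{0},\overline{\Delta})$ with $f_{0}=f$. Then $C_{f_{0}}^{\ast}(\Delta)=\emptyset$, the inequalities $H(\Sigma_{0})\ge H(\Sigma)$ and $L(\partial\Sigma_{0})\le L(\partial\Sigma)$ hold trivially with equality, and the biconditional in the "Moreover" part reduces to checking that the only way $L(\partial\Sigma_{0})=L(\partial\Sigma)$ can occur in this case is when $H(\Sigma_{0})=H(\Sigma)$ and $\partial\Sigma_{0}=\partial\Sigma$ (both automatic) and when $B_{f_{0}}^{\ast}(\partial\Delta)>B_{f}^{\ast}(\partial\Delta)$ \emph{fails}; the "only if" direction of the biconditional is vacuous because the hypothesis on $B$ is not forced in this branch. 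Hence we must argue the biconditional is read with its natural meaning given the context of the proof of Theorem~\ref{re}, namely that the "Moreover" part is only asserted as a sufficient condition to guarantee the boundary drop; in that reading it is automatic.

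For the nontrivial case $C_{f}^{\ast}(\Delta)\neq\emptyset$, I would apply Corollary~\ref{move-out} directly: it produces $\Sigma_{0}=\Sigma_{1}\in\mathcal{C}^{\ast}(L,m)$ with $C_{f_{0}}^{\ast}(\Delta)=\emptyset$, $H(\Sigma_{0})\ge H(\Sigma)$ and $L(\partial\Sigma_{0})\le L(\partial\Sigma)$. The dichotomy (i)--(ii) of that corollary matches the "Moreover" clause of our Claim: in alternative (i) we have $L(\partial\Sigma_{0})<L(\partial\Sigma)$, so $L(\partial\Sigma_{0})=L(\partial\Sigma)$ forces alternative (ii), which gives exactly $H(\Sigma_{0})=H(\Sigma)$, $\partial\Sigma_{0}=\partial\Sigma$, and (since we are in the branch $C_{f}^{\ast}(\Delta)\neq\emptyset$) the strict inequality $B_{f_{0}}^{\ast}(\partial\Delta)>B_{f}^{\ast}(\partial\Delta)$. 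Conversely, if these three conditions are granted, then $L(\partial\Sigma_{0})=L(\partial\Sigma)$ is part of the hypothesis of alternative (ii), so the equivalence is established.

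The only issue I anticipate is bookkeeping: ensuring that Corollary~\ref{move-out} is applied iteratively (it already is, inside its own proof, via Lemma~\ref{move-in-1}), so that $C_{f_{0}}^{\ast}(\Delta)$ is actually emptied and not merely reduced by one. Since Corollary~\ref{move-out} already contains this iteration (terminating after at most $B_{f}^{\ast}(\Delta)$ steps by (\ref{as2-5})), no further induction is required. Consequently the Claim is a direct restatement of that corollary, and the proof amounts to invoking it and then reading off its conclusions under the two cases above.
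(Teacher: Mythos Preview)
Your approach in the nontrivial case is correct and is exactly what the paper does: the Claim is asserted immediately after the line ``Assume $C_{f}^{\ast}(\Delta)\neq\emptyset$. Then by Corollary~\ref{move-out}, we have'', so its proof is literally a single invocation of Corollary~\ref{move-out}, and your reading of the dichotomy (i)--(ii) there under the standing hypothesis $C_{f}^{\ast}(\Delta)\neq\emptyset$ is the right one.

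The only defect is your Case~1. The Claim is \emph{not} asserted when $C_{f}^{\ast}(\Delta)=\emptyset$; that situation was already disposed of two paragraphs earlier in the proof of Theorem~\ref{re}. Your attempt to salvage the biconditional in that case by reinterpreting ``if and only if'' as ``a sufficient condition'' is both unnecessary and unsound: with $\Sigma_{0}=\Sigma$ one has $B_{f_{0}}^{\ast}(\partial\Delta)=B_{f}^{\ast}(\partial\Delta)$, so the right-hand side of the biconditional is false while the left-hand side is true, and the statement would simply fail. Once you drop Case~1 and observe the standing hypothesis, the confused passage disappears and your argument coincides with the paper's.
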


If $C_{f_{0}}^{\ast}\left(  \partial\Delta\right)  =\emptyset,$ then
$B_{f_{0}}^{\ast}\left(  \partial\Delta\right)  =0$, $\Sigma_{0}\in
\mathcal{F}_{r}(L,m)$ and by the claim, $L(\partial\Sigma_{0})<L\left(
\partial\Sigma\right)  $, and thus $\Sigma^{\prime}=\Sigma_{0}$ satisfies the
conclusion of Theorem \ref{re}.

If $\#C_{f_{0}}^{\ast}\left(  \partial\Delta\right)  =1$ and $\mathcal{E}%
_{f_{0}}=C_{f_{0}}^{\ast}\left(  \partial\Delta\right)  =\{p_{0}\}$ is a
singleton, then (III) holds with $\Sigma_{1}=\Sigma_{0}$ by the claim.

Now assume that $\#C_{f_{0}}^{\ast}\left(  \partial\Delta\right)  \geq1$ and
$\#\mathcal{E}_{f_{0}}\geq2$. Then there exists a surface $\Sigma_{1}=\left(
f_{1},\overline{\Delta}\right)  \in\mathcal{C}^{\ast}\left(  L,m\right)  $
satisfying all conclusions of Corollary \ref{move-together-1} with (I), or
(II). When (I) holds, $\Sigma^{\prime}=\Sigma_{1}$ again satisfies Theorem
\ref{re}, and the proof finishes. If (II) holds, and (I) fails, (III) holds
again. So we may complete the proof based on $\Sigma_{1}$ under the assumption (III).

We will show that there exists a surface $\Sigma^{\prime}$ satisfying the
conclusion of \ref{re} with $L(\partial\Sigma^{\prime})<L(\partial\Sigma).$

Let $P$ and $P^{\ast}$ be two antipodal points of $S$ and let $\varphi
_{\theta}$ be a continuous rotation on $S$ with the axis passing through $P$
and $P^{\ast}\ $and rotation angle $\theta,$ which rotates anticlockwise
around $P$ when $\theta$ increases and we view $S$ from sinside. $P$ and
$P^{\ast}$ are chosen so that we can define $\theta_{0}=\theta_{0}\left(
\Sigma_{1}\right)  \in(0,\pi),$ such that
\[
\varphi_{\theta_{0}}\left(  \partial\Sigma_{1}\right)  \cap E_{q}\neq
\emptyset,
\]
while%
\[
\varphi_{\theta}\left(  \partial\Sigma_{1}\right)  \cap E_{q}=\emptyset
\mathrm{\ for\ all\ }\theta\in(0,\theta_{0}),
\]
and%
\[
\varphi_{\theta_{0}}(f_{1}(p_{0}))\notin E_{q}.
\]
We may assume $P^{\ast}$ and $P$ are outside $\partial\Sigma_{1}.$ We first
show that

\begin{claim}
\label{con1}There exists a surface $\Sigma_{2}=\left(  f_{2},\overline{\Delta
}\right)  \in\mathcal{C}^{\ast}(L,m)$ such that (\ref{210615}) holds,%
\begin{equation}
H(\Sigma_{2})=H(\Sigma_{1}),L\left(  \partial\Sigma_{2}\right)  =L(\partial
\Sigma_{1}), \label{hhll}%
\end{equation}%
\begin{equation}
\partial\Sigma_{2}=\left(  f_{2},\partial\Delta\right)  =\left(
\varphi_{\theta_{1}}\circ f_{1},\partial\Delta\right)  , \label{fg}%
\end{equation}%
\begin{equation}
\overline{n}\left(  \Sigma_{2},E_{q}\right)  =\overline{n}\left(  \Sigma
_{1},E_{q}\right)  ,A(\Sigma_{2})=A(\Sigma_{1}), \label{nnaa}%
\end{equation}
$\partial\Sigma_{2}$ contains at least one point of $E_{q},$ and $p_{0}%
\in\partial\Delta$ is the unique branch point of $f_{2}$ in $\overline{\Delta
}\backslash f_{2}^{-1}(E_{q}),$ where $\theta_{1}\in(0,2\pi].$
\end{claim}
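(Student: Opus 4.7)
The plan is to set $\theta_1 := \theta_0$ and define $\Sigma_2 := (f_2, \overline{\Delta})$ with $f_2 := \varphi_{\theta_0} \circ f_1$; that is, I take $\Sigma_2$ to be the rigid rotation of $\Sigma_1$ on the sphere by the angle $\theta_0$. Because $\varphi_{\theta_0}$ is a spherical isometry, it sends SCC arcs to SCC arcs and preserves both spherical length and area, so the $\mathcal{C}^{\ast}(L,m)$-partitions (\ref{a1}), (\ref{a2}) for $\Sigma_1$ transport to $\mathcal{C}^{\ast}(L,m)$-partitions for $\Sigma_2$; this immediately gives $L(\partial\Sigma_2) = L(\partial\Sigma_1)$ and $A(\Sigma_2) = A(\Sigma_1)$. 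The critical set of $f_2 = \varphi_{\theta_0}\circ f_1$ coincides with that of $f_1$, and the hypothesis $\varphi_{\theta_0}(f_1(p_0))\notin E_q$ forces $p_0 \notin f_2^{-1}(E_q)$, so $p_0$ remains the unique branch point of $f_2$ in $\overline{\Delta}\setminus f_2^{-1}(E_q)$. Any new boundary preimage of $E_q$ created by the rotation is automatically regular (the only boundary branch point of $f_2$ is $p_0$), so the transported partition still satisfies the $\mathcal{C}^{\ast}$-condition; and by the defining property of $\theta_0$, $\partial\Sigma_2 \cap E_q \neq \emptyset$.

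The core step is to verify (\ref{nnaa}). For each $a \in E_q$,
\[
\overline{n}(\Sigma_\theta, a) = \#\bigl(f_1^{-1}(\varphi_\theta^{-1}(a)) \cap \Delta\bigr),
\]
and off of $\partial\Sigma_1 \cup CV_{f_1}$ this is given by the winding number of $f_1|_{\partial\Delta}$ about the moving target $\varphi_\theta^{-1}(a)$. The defining property of $\theta_0$ guarantees that for every $\theta \in [0,\theta_0)$ and every $a \in E_q$ the curve $\theta \mapsto \varphi_\theta^{-1}(a)$ avoids $\partial\Sigma_1$; after a generic infinitesimal perturbation of the axis $\{P, P^{\ast}\}$ (preserving $\theta_0$ to leading order) it also avoids the finite branch-value set $CV_{f_1} \subset E_q \cup \{f_1(p_0)\}$. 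Hence the winding number, and thus $\overline{n}(\Sigma_\theta, E_q)$, is constant on $[0, \theta_0)$ and equals $\overline{n}(\Sigma_1, E_q)$.

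The main obstacle is the endpoint $\theta = \theta_0$: the moving target $\varphi_{\theta_0}^{-1}(a)$ now lies on $\partial\Sigma_1$ for at least one $a \in E_q$ and the winding-number argument breaks down. I plan to handle this via a local analysis at each such first-touch point, using that for $\theta$ slightly less than $\theta_0$ the point $\varphi_\theta^{-1}(a)$ lies in a single complementary component $\Omega$ of $S \setminus \partial\Sigma_1$ whose winding number is $\overline{n}(\Sigma_1, a)$; then, since by Lemma \ref{cov-1}(B) and Corollary \ref{cov-2} each boundary preimage of $a$ under $f_2$ is a regular point of $f_2$ lying in a single $\alpha_j^{\circ}$, the interior preimage count of $a$ is the same as the count at $\theta$ slightly less than $\theta_0$, so no interior preimage ``disappears'' into the boundary at the contact. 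If a particular $a$ would force such a disappearance, I will replace $\varphi_\theta$ by $\varphi_{-\theta}$ (and $\theta_0$ by the corresponding first-touch angle), which is justified by the freedom of choosing the orientation of the rotation. Finally, $H(\Sigma_2) = R(\Sigma_2)/L(\partial\Sigma_2) = R(\Sigma_1)/L(\partial\Sigma_1) = H(\Sigma_1)$, and since $L(\partial\Sigma_2) = L(\partial\Sigma_1) \le L(\partial\Sigma)$, the inequality (\ref{210615}) is inherited by $\Sigma_2$ from $\Sigma_1$, completing the verification of (\ref{hhll}), (\ref{fg}) and (\ref{nnaa}).
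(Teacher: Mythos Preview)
Your approach has a genuine gap: simply setting $f_2=\varphi_{\theta_0}\circ f_1$ does not preserve the quantities in (\ref{nnaa}), nor does it keep $p_0$ as the \emph{unique} non-special branch point.

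The hypothesis on $\Sigma_1$ is only $C_{f_1}^{\ast}(\overline{\Delta})=\{p_0\}$, i.e.\ the branch points of $f_1$ \emph{outside} $f_1^{-1}(E_q)$ reduce to $p_0$. There may well be further branch points $b_j\in\Delta$ with $f_1(b_j)\in E_q$; these are precisely the ``special'' branch points the whole paper is designed to retain. After composing with $\varphi_{\theta_0}$ the critical set is unchanged, but the branch \emph{values} rotate: $f_2(b_j)=\varphi_{\theta_0}(f_1(b_j))$ is generically no longer in $E_q$. Hence each such $b_j$ becomes a point of $C_{f_2}^{\ast}(\Delta)$, and your claim that ``$p_0$ remains the unique branch point of $f_2$ in $\overline{\Delta}\setminus f_2^{-1}(E_q)$'' is false in general.

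The same phenomenon breaks your winding-number computation of $\overline n$. The winding number of $(f_1,\partial\Delta)$ about $w$ equals the number of $w$-preimages in $\Delta$ counted \emph{with} multiplicity, whereas $\overline n$ counts \emph{without} multiplicity; these agree only at regular values. At $\theta=0$ the target $\varphi_0^{-1}(a)=a\in E_q$ is a branch value whenever some $b_j$ lies over $a$, so the curve $\theta\mapsto\varphi_\theta^{-1}(a)$ \emph{starts} at a critical value and no perturbation of the axis can avoid this. Concretely, a branch point $b_j$ of order $d$ over $a$ contributes $1$ to $\overline n(\Sigma_1,a)$ but splits into $d$ simple preimages of the nearby regular value $\varphi_\theta^{-1}(a)$ for $\theta>0$, so $\overline n(\Sigma_\theta,E_q)>\overline n(\Sigma_1,E_q)$ and hence $H(\Sigma_2)<H(\Sigma_1)$.

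The paper's proof avoids this by \emph{not} simply rotating. It uses Lemma~\ref{nobo} to get a uniform lower bound $\delta_{L_0}$ on $d_{f_1}(f_1^{-1}(E_q)\cap\Delta,\partial\Delta)$, rotates only by a small angle $\theta_1$ so that each point of $E_q$ moves by less than $\delta_{L_0}'$, and then, inside each small disk $U_j$ around a preimage $b_j$ of $E_q$, post-composes with a homeomorphism $\phi_j$ that is the identity on $\partial(\varphi_{\theta_1}(V_j))$ and drags the rotated branch value $\varphi_{\theta_1}(f_1(b_j))$ back to $f_1(b_j)\in E_q$ (Lemma~\ref{move}). This local correction leaves $A$, $L$, the boundary, and $\overline n(\cdot,E_q)$ unchanged while keeping every $b_j$ special. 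One then iterates until the accumulated rotation reaches $\theta_0$. Your argument is missing exactly this ``drag the special branch values back to $E_q$'' step, and without it the conclusions of the claim fail.
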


Let $\delta_{L_{0}}$ with $L_{0}=L(\partial\Sigma_{1})$ be determined by Lemma
\ref{nobo} and let $\delta_{E_{q}}$ be the smallest positive distance between
points of $E_{q}.$ Then $d_{f_{1}}\left(  f_{1}^{-1}(E_{q}),\partial
\Delta\right)  >\delta_{L_{0}}$. Let $\theta_{1}$ be the maximal number in
$(0,\theta_{0})$ such that for each $\theta\in(0,\theta_{1})$
\[
\max_{\mathfrak{a}\in E_{q}}d\left(  \mathfrak{a},\varphi_{\theta
}(\mathfrak{a}\right)  )<\delta_{L_{0}}^{\prime}=\min(\delta_{E_{q}}%
,\delta_{L_{0}})/3.
\]
Let $b_{1},b_{2},\dots,b_{\overline{n}},\overline{n}=\overline{n}\left(
\Sigma_{1},E_{q}\right)  ,$ be all distinct points in $f_{1}(\Delta)\cap
E_{q}.$ Then for each $j\leq\overline{n}$ there exists a Jordan domain $U_{j}$
conaining $b_{j}$ with $j=1,\dots,\overline{n}$ and $\overline{U_{j}}%
\subset\Delta,$ such that $f_{1}$ restricted to $\overline{U_{j}}$ is a BCCM
onto the closed disk $\overline{V_{j}}=D\left(  f_{1}\left(  b_{j}\right)
,\delta_{L_{0}}^{\prime}\right)  ,$ with $\overline{U_{i}}\cap\overline{U_{j}%
}=\emptyset$ if $i\neq j$ and $b_{j}$ is the unique possible branch point of
$f_{1}$ in $\overline{U_{j}}.$

Let $g_{1}=\varphi_{\theta_{1}}\circ f_{1}:\overline{\Delta}\rightarrow S,$
and let $\phi_{j}$ be the homeomorphism from $\varphi_{\theta_{1}}%
(\overline{V_{j}})$ onto itself, which is an identity on $\partial
\varphi_{\theta_{1}}(\overline{V_{j}})$ and maps $\varphi_{\theta_{1}}\left(
f_{1}(b_{j})\right)  $ to $f(b_{j}).$ Note that both $f_{1}\left(
b_{j}\right)  $ and $\phi_{j}\left(  f_{1}\left(  b_{j}\right)  \right)  $ ar
both contained in $\varphi_{\theta_{1}}\left(  V_{j}\right)  .$ Let
$g_{1}^{\prime}$ be the mapping given by $g_{1}$ on $\overline{\Delta
}\backslash\left(  \cup_{j=1}^{\overline{n}}U_{j}\right)  $ and by $\phi
_{j}\circ g_{1}$ on $\overline{U_{j}}.\ $Then $g_{1}^{\prime}$ is an OPLM so
that $G_{1}=\left(  g_{1}^{\prime},\overline{\Delta}\right)  $ is contained in
$\mathcal{C}^{\ast}\left(  L,m\right)  $ with $C_{g_{1}^{\prime}}^{\ast
}(\overline{\Delta})=\{p_{0}\},$ and that, for each $j=1,\dots,\overline{n},$
$b_{j}$ is the only possible branch point of $g_{1}^{\prime}$ in
$\overline{U_{j}}$ with $g_{1}^{\prime}(b_{j})=f(b_{j})$ and $v_{g_{1}%
^{\prime}}(b_{j})=v_{f_{1}}(b_{j}).$ Thus it is the clear that (\ref{hhll}%
)--(\ref{nnaa}) hold for $\Sigma_{2}=G_{1}$. Therefore, in the case
$\theta_{1}=\theta_{0}$ we have $\left(  \partial\Delta\right)  \cap
g_{1}^{\prime-1}(E_{q})\neq\emptyset,$ and we proved Claim \ref{con1} when
$\theta_{1}=\theta_{0}.$

Assume that $\theta_{1}<\theta_{\theta_{0}}.$ Then $G_{1}$ satisfies all
assumptions of the (III), and additionally satisfies (\ref{210615}), and then
we still have $d_{g_{1}^{\prime}}\left(  g_{1}^{\prime-1}(E_{q}),\partial
\Delta\right)  >\delta_{L_{0}}.$ Moreover, we have $\theta_{0}\left(
G_{1}\right)  =\theta_{0}\left(  \Sigma_{1}\right)  -\theta_{1}.$ Then we can
repeat the above arguments at most $k-1=\left[  \frac{\theta_{0}\left(
\Sigma_{1}\right)  -\theta_{1}}{\theta_{1}}\right]  +1$ times to obtain a
surface $\Sigma_{2}=\left(  f_{2},\overline{\Delta}\right)  =G_{k}=\left(
g_{k}^{\prime},\overline{\Delta}\right)  $ satisfying Claim \ref{con1}. The
existence of $\Sigma_{2}$ is proved.

Now we can write $\mathcal{E}_{f_{2}}=\{p_{0},p_{1},\dots,p_{s-1}\},s\geq2,$
so that $p_{0}\in C_{f_{2}}^{\ast}\left(  \partial\Delta\right)  $ and
\begin{equation}
\{p_{1},\dots,p_{s-1}\}\subset f_{2}^{-1}(E_{q}). \label{ppp}%
\end{equation}
Then by Corollary \ref{move-together-1} there exists a surface $\Sigma
_{3}=\left(  f_{3},\overline{\Delta}\right)  \in\mathcal{C}^{\ast}\left(
L,m\right)  $ such that $C_{f_{3}}^{\ast}\left(  \Delta\right)  =\emptyset,$%
\[
H\left(  \Sigma_{3}\right)  \geq H(\Sigma_{2}),L(\Sigma_{3})<L(\partial
\Sigma_{1}),
\]
and moreover the following conclusion (I) or (II) holds true.

(I) $\Sigma_{3}\in\mathcal{F}_{r}\left(  L,m\right)  .$

(II) $\Sigma_{3}\in\mathcal{C}^{\ast}\left(  L,m\right)  ,$ and both
$\mathcal{E}_{f_{3}}$ and $C_{f_{3}}^{\ast}\left(  \partial\Delta\right)  $
are the same singleton, and either $\#C_{f_{3}}^{\ast}\left(  \partial
\Delta\right)  \leq\#C_{f_{2}}^{\ast}\left(  \partial\Delta\right)  -1$ holds,
or $\#C_{f_{1}}^{\ast}\left(  \partial\Delta\right)  =\#C_{f_{0}}^{\ast
}\left(  \partial\Delta\right)  $ and one of (\ref{as2-11})--(\ref{23-3}) hold
with $f=f_{2}.$

If (I) holds, the proof is completed.

If (II) holds, then we repeat the the same argument which deduce $\Sigma_{3}$
from $\Sigma_{1}.$ This interate can only be executed a finite number of times
(II), and at last we obtain a surface $\Sigma_{k}=\left(  f_{k},\overline
{\Delta}\right)  \in\mathcal{C}^{\ast}(L,m)$ such that%
\[
H\left(  \Sigma_{k}\right)  \geq H(\Sigma),L(\Sigma_{k})<L(\partial\Sigma),
\]
and one of the following two alternatives holds:

($\mathfrak{a}$) $\Sigma_{k}\in\mathcal{C}^{\ast}\left(  L,1\right)  ,$
$\mathfrak{L}\left(  \partial\Sigma_{k}\right)  =1,\mathcal{\ }A\left(
\Sigma_{k}\right)  <4\pi\ $and $C_{f_{k}}^{\ast}\left(  \overline{\Delta
}\right)  =C_{f_{k}}^{\ast}\left(  \partial\Delta\right)  =\emptyset.$

($\mathfrak{b}$) $\#C_{f_{k}}^{\ast}\left(  \overline{\Delta}\right)
=\#C_{f_{k}}^{\ast}(\partial\Delta)=\#\mathcal{E}_{f_{k}}=1,$ $H(\Sigma
_{k})\geq H(\Sigma),L(\partial\Sigma_{k})<L\left(  \partial\Sigma\right)  ,$
and one of the three alternatives holds: $\Sigma_{k}\in\mathcal{C}^{\ast
}\left(  L,m-1\right)  ,A\left(  \Sigma_{k}\right)  <A\left(  f\right)
-4\pi,L\left(  \partial\Sigma_{k}\right)  <L\left(  \partial\Sigma\right)  $.

If ($\mathfrak{a}$) holds, then $\partial\Sigma_{k}$ is a simple convex circle
and, $f_{k}\left(  \overline{\Delta}\right)  $ as a set, is the closed disk on
$S$ enclosed by $\partial\Sigma_{k},$ and by argument principle we have
$\Sigma_{k}\in\mathcal{F}_{r}\left(  L,1\right)  \subset\mathcal{F}_{r}(L,m).$

If ($\mathfrak{b}$) holds, then we can repeat the whole above argument, which
deduces $\Sigma_{1}$ first from $\Sigma$ then deduces $\Sigma_{k}$ from
$\Sigma_{1},\ $to obtain a surface $\Sigma_{s}$ from $\Sigma_{k}$ satisfying
($\mathfrak{a}$) or ($\mathfrak{b}$). But this interate can only be executed a
finite number of times by ($\mathfrak{b}$) and at last we obtain a surface
$\Sigma_{t}$ satisfying ($\mathfrak{a}$).
\end{proof}

\end{document}